\newtheorem{thm}{Theorem}[section]
\newtheorem{cor}[thm]{Corollary}
\newtheorem{lem}[thm]{Lemma}
\theoremstyle{definition}
\theoremstyle{remark}
\newtheorem{rem}[thm]{Remark}
\theoremstyle{conclusion}
\theoremstyle{question}
\numberwithin{equation}{section}
\begin{document}
\title[Uniformly elliptic nonlocal Bellman operator]{Maximum principles and the method of moving planes for the uniformly elliptic nonlocal Bellman operator and applications}

\author{Wei Dai, Guolin Qin}

\address{School of Mathematical Sciences, Beihang University (BUAA), Beijing 100083, P. R. China}
\email{weidai@buaa.edu.cn}

\address{Institute of Applied Mathematics, Chinese Academy of Sciences, Beijing 100190, and University of Chinese Academy of Sciences, Beijing 100049, P. R. China}
\email{qinguolin18@mails.ucas.ac.cn}

\thanks{Wei Dai is supported by the NNSF of China (No. 11971049) and the Fundamental Research Funds for the Central Universities.}

\begin{abstract}
In this paper, we establish various maximum principles and develop the method of moving planes and the sliding method (on general unbounded domains) for equations involving the uniformly elliptic nonlocal Bellman operator. As a consequence, we derive multiple applications of these maximum principles and the moving planes method. For instance, we prove symmetry, monotonicity and uniqueness results and asymptotic properties for solutions to various equations involving the uniformly elliptic nonlocal Bellman operator in bounded domains, unbounded domains, epigraph or $\mathbb{R}^{n}$. In particular, the uniformly elliptic nonlocal Monge-Amp\`{e}re operator introduced by Caffarelli and Charro in \cite{CC} is a typical example of the uniformly elliptic nonlocal Bellman operator.
\end{abstract}
\maketitle {\small {\bf Keywords:} Uniformly elliptic nonlocal Bellman operator; Uniformly elliptic nonlocal Monge-Amp\`{e}re operator; Maximum principles; Method of moving planes; Monotonicity, symmetry and uniqueness; Asymptotic properties.\\

{\bf 2010 MSC} Primary: 35R11; Secondary: 35B06, 35B53.}

\section{Introduction}

\subsection{Background and setting of the problem}
In this paper, we are concerned with the following nonlinear equations involving the uniformly elliptic nonlocal Bellman operator:
\begin{equation}\label{PDE}
  -\mathrm{F}_{s}u(x)=f\left(x,u(x),\nabla u(x)\right) \qquad \text{in} \,\,\, \Omega\subseteq\mathbb{R}^{n}
\end{equation}
with $0<s<1$ and $n\geq2$, where $\Omega$ is a bounded or unbounded domain in $\mathbb{R}^{n}$.  The uniformly elliptic Bellman integro-differential operator is defined by
\begin{eqnarray}\label{Def}
 && \mathrm{F}_{s}u(x):=\inf\left\{P.V.\int_{\mathbb{R}^{n}}\frac{u(y)-u(x)}{|A^{-1}(y-x)|^{n+2s}}\mathrm{d}y \,\bigg|\, 0<\theta I\leq A\leq\Theta I\right\}  \\
 \nonumber && \qquad\quad\,\,\, =\inf\left\{\frac{1}{2}\int_{\mathbb{R}^{n}}\frac{u(x+y)+u(x-y)-2u(x)}{|A^{-1}y|^{n+2s}}\mathrm{d}y \,\bigg|\, 0<\theta I\leq A\leq\Theta I\right\},
\end{eqnarray}
where $\theta>0$ is an arbitrarily small constant and $\Theta\geq\theta$ is an arbitrarily large constant, P.V. stands for the Cauchy principal value and $B\leq A$ ($B<A$) means that $A-B$ is a non-negative (positive) definite square matrix. The condition $0<\theta I\leq A\leq\Theta I$ is equivalent to $\lambda_{min}(A)\geq\theta>0$ and $\lambda_{max}(A)\leq\Theta$. Let
\begin{equation}\label{space}
\mathcal{L}_{s}(\mathbb{R}^{n}):=\left\{u : \mathbb{R}^{n} \rightarrow \mathbb{R} \,\Bigg|\, \int_{\mathbb{R}^{n}} \frac{|u(x)|}{1+|x|^{n+2s}}dx<+\infty\right\}.
\end{equation}
Then, one can easily verify that for any $u \in C_{\text {loc }}^{1,1}\cap\mathcal{L}_s(\mathbb{R}^{n})$, the integral on the right hand side of the definition \eqref{Def} is well-defined. Hence $\mathrm{F}_{s}u$ makes sense for all functions $u \in C_{\text {loc }}^{1,1}\cap\mathcal{L}_{s}(\mathbb{R}^{n})$.

\smallskip

For any given unit vector $\mathbf{e}\in\mathbb{R}^n$, we will also consider the uniformly elliptic Bellman operator $\mathbf{F_{s}}$ ($s\in(0,1)$) associated with vector $\mathbf{e}$ defined by
\begin{eqnarray}\label{Defv}
 && \mathbf{F_{s}}u(x):=\inf\left\{P.V.\int_{\mathbb{R}^{n}}\frac{u(y)-u(x)}{|A^{-1}(y-x)|^{n+2s}}\mathrm{d}y \,\bigg|\, 0<\theta I\leq A\leq\Theta I, \, \mathbf{e}\in EV(A) \right\}
\end{eqnarray}
for any $u \in C_{\text {loc }}^{1,1}\cap\mathcal{L}_{s}(\mathbb{R}^{n})$, where $EV(A)$ denotes the set of all eigenvectors of matrix $A$. One should note that the restriction $\mathbf{e}\in EV(A)$ is much weaker than assuming $A$ is diagonal.

\smallskip

By the definition \eqref{Def}, we get immediately the following comparison:
\begin{equation}\label{comparison}
  -\mathrm{F}_{s}u(x)\geq C_{n,s,\theta,\Theta}(-\Delta)^{s}u(x).
\end{equation}
The fractional Laplacian $(-\Delta)^{s}$ is also a nonlocal pseudo-differential operator, which is defined by (see e.g. \cite{CG,CLL,CLM,CQ,DQ})
\begin{eqnarray}\label{def-fracL}
(-\Delta)^s u(x):=C_{n,s} \, P.V. \int_{\mathbb{R}^{n}}\frac{u(x)-u(y)}{|x-y|^{n+2s}} \mathrm{d}y
\end{eqnarray}
for any $u\in C^{1,1}_{loc}\cap\mathcal{L}_{s}(\mathbb{R}^{n})$. It can also be defined equivalently via the Caffarelli and Silvestre's extension method (refer to \cite{CS}, see also \cite{CT,S}). The constants $C_{n,s,\theta,\Theta}$ in \eqref{comparison} and $C_{n,s}$ in \eqref{def-fracL} satisfy $C_{n,s,\theta,\Theta}C_{n,s}=1$ provided that $\theta\leq1\leq\Theta$.

\smallskip

In recent years, fractional order operators have attracted more and more attentions. Besides various applications in fluid mechanics, molecular dynamics, relativistic quantum mechanics of stars (see e.g. \cite{CV,Co}) and conformal geometry (see e.g. \cite{CG}), it also has many applications in probability and finance (see \cite{Be,CT}). The fractional Laplacians $(-\Delta)^{s}$ can be understood as the infinitesimal generator of a stable L\'{e}vy diffusion process (see \cite{Be}). The general pseudo-relativistic operators with singular potentials describes a spin zero relativistic particle of charge $e$ and mass $m$ in the Coulomb field of an infinitely heavy nucleus of charge $Z$.

\smallskip

However, the non-locality virtue of these fractional operators makes them difficult to be investigated. To overcome this difficulty, we basically have two approaches. One way is to define these fractional operators via Caffarelli and Silvestre's extension method (see \cite{CS}), so as to reduce the nonlocal problem into a local one in higher dimensions. Another approach is to derive the integral representation formulae of solutions (see \cite{CLO,CLM}). After establishing the equivalence between the fractional order equation and its corresponding integral equation, one can study the equivalent integral equations instead and consequently derive various properties of solutions to the PDEs involving nonlocal fractional operators. These two methods have been applied successfully to study equations involving nonlocal fractional operators, and a series of fruitful results have been derived (see \cite{BCPS,CLO,CLM,CS,CT,FLS,S} and the references therein).

\smallskip

Nevertheless, the above two approaches do not work for the uniformly elliptic nonlocal Bellman operator $\mathrm{F}_{s}$ and general fully nonlinear integro-differential operators (see e.g. \cite{Bi,CS2,CS3,RS}), for instance, the fractional $p$-Laplacians $(-\Delta)^{s}_{p}$ (see e.g. \cite{CL2,CLiu,CQ,CW1,CW3,DLW} for more details).

\smallskip

Therefore, it is desirable for us to develop the method of moving planes directly for the uniformly elliptic nonlocal Bellman operator $\mathrm{F}_{s}$ ($s\in(0,1)$) without going through extension methods or integral representation formulae. Direct moving planes method and sliding method have been introduced for fractional Laplacian $(-\Delta)^{s}$ in \cite{CLL,CW2,DSV}, for fractional $p$-Laplacians $(-\Delta)^{s}_{p}$ in \cite{CL2,CLiu,CW1,CW3} and for pseudo-relativistic Schr\"{o}dinger operators $(-\Delta+m^{2})^{s}$ in \cite{DQW}. These methods have been applied to obtain symmetry, monotonicity and uniqueness of solutions to various equations involving $(-\Delta)^{s}$, $(-\Delta)^{s}_{p}$ or $(-\Delta+m^{2})^{s}$. Sliding method for the uniformly elliptic nonlocal Monge-Amp\`{e}re operator $\mathrm{D}_{s}$ has been developed recently in \cite{CBL}, as applications, the authors derived the monotonicity of solutions to $-\mathrm{D}_{s}u=f(u)$ in bounded domains and the whole space. The uniformly elliptic nonlocal Monge-Amp\`{e}re operator $\mathrm{D}_{s}$ is a special case of the uniformly elliptic nonlocal Bellman operator $\mathrm{F}_{s}$.

\smallskip

The goal of this paper is to establish various maximum principles for the uniformly elliptic nonlocal Bellman operators $\mathrm{F}_{s}$ and $\mathbf{F_{s}}$, as consequences, introduce \emph{the method of moving planes} for $\mathbf{F_{s}}$ and \emph{the sliding method (on general unbounded domains)} for $\mathrm{F}_{s}$ and $\mathbf{F_{s}}$ and derive \emph{multiple applications}. For instance, under broad assumptions on the nonlinearity $f(x,u,\nabla u)$, we prove \emph{symmetry}, \emph{monotonicity} and \emph{uniqueness} results, and \emph{asymptotic properties} for solutions to equations \eqref{PDE} in \emph{bounded domains}, \emph{unbounded domains}, \emph{epigraph} or $\mathbb{R}^{n}$. For related literatures on the nonlocal Bellman equations or the regular second order Bellman equations, please refer to e.g. \cite{AK,Evans,K,Lions} and the references therein.

The methods of moving planes was invented by Alexandroff in the early 1950s. Later, it was further developed by Serrin \cite{Serrin}, Gidas, Ni and Nirenberg \cite{GNN1}, Caffarelli, Gidas and Spruck \cite{CGS}, Chen and Li \cite{CL}, Li \cite{LiC}, Lin \cite{Lin}, Chen, Li and Ou \cite{CLO} and many others. For more literatures on the methods of moving planes, see \cite{BCPS,BDGQ,BN3,CD,CDQ,Cheng1,CHL,CL2,CLL,CQ,CT,CW3,CY,DFQ,DLW,DQ,DQ2,DQW,Li1,Li2,LD,WX} and the references therein.

\subsection{Main results}
In this paper, inspired by the direct moving planes methods for $(-\Delta)^{s}$, $(-\Delta)^{s}_{p}$ and $(-\Delta+m^{2})^{s}$ established in \cite{CL2,CLL,CW3,DQW}, we will establish various maximum principles and introduce \emph{the method of moving planes} for the uniformly elliptic nonlocal Bellman operators $\mathbf{F_{s}}$ and \emph{the sliding method (on general unbounded domains)} for $\mathrm{F}_{s}$ and $\mathbf{F_{s}}$ with $s\in(0,1)$.

\smallskip

The main contents and results in our paper are arranged as follows.

\smallskip

In Section 2, we will establish various maximum principles for \emph{anti-symmetric functions} and give some \emph{immediate applications}. These maximum principles are \emph{key ingredients} in applying the \emph{method of moving planes} for the uniformly elliptic nonlocal Bellman operator $\mathbf{F_{s}}$.

\smallskip

In Section 3, by applying the maximum principles established in Section 2, we introduce the \emph{method of moving planes} for the uniformly elliptic nonlocal Bellman operator $\mathbf{F_{s}}$. As applications, under broad assumptions on the nonlinearity $f(x,u,\nabla u)$, we derive \emph{symmetry}, \emph{monotonicity} and \emph{uniqueness} results for solutions to equations \eqref{PDE} in \emph{bounded domains}, \emph{unbounded domains}, \emph{coercive epigraph} and $\mathbb{R}^{n}$. The admissible choices of the nonlinearity $f(x,u,\nabla u)$ include: $u^{p}(1+|\nabla u|^{2})^{\frac{\sigma}{2}}$ with $p\geq1$ and $\sigma\leq0$, $e^{\kappa u}(1+|\nabla u|^{2})^{\frac{\sigma}{2}}$ with $\kappa\in\mathbb{R}$ and $\sigma\leq0$, $K(x)(1+|\nabla u|^{2})^{\frac{\sigma}{2}}$ with $\sigma\in\mathbb{R}$ and $K(x)$ satisfying certain assumptions including the case of $K(x)\equiv1$, the De Giorgi type nonlinearity $u-u^{3}$ and the Schr\"{o}dinger type nonlinearity $u^p-u$ with $1<p<+\infty$.

\smallskip

Subsection 4.1 is devoted to proving various maximum principles in \emph{unbounded open sets} for $\mathrm{F}_{s}$ and $\mathbf{F_{s}}$. As applications, in subsections 4.2-4.4, under broad assumptions on the nonlinearity $f(u)$, by applying the sliding method for $\mathrm{F}_{s}$ and $\mathbf{F_{s}}$ and the method of moving planes for $\mathbf{F_{s}}$, we derive \emph{monotonicity} and \emph{uniqueness} results, and \emph{asymptotic properties} for solutions to
\begin{equation}\label{PDEf}
  -\mathrm{F}_{s}u(x)=f\left(u(x)\right) \qquad \text{and} \qquad -\mathbf{F_{s}}u(x)=f\left(u(x)\right)
\end{equation}
in epigraph $E$ and $\mathbb{R}^{n}_{+}$, where admissible choices of the nonlinearity $f(u)$ include: the De Giorgi type nonlinearity $u-u^{3}$ and $e^{\kappa u}$ with $\kappa\in\mathbb{R}$. Our results in subsections 4.2-4.3 can be regarded as extensions of the applications of the sliding methods in \cite{CBL} for the uniformly elliptic nonlocal Monge-Amp\`{e}re operator $\mathrm{D}_{s}$ on bounded domain or $\mathbb{R}^{n}$ to general nonlocal Bellman operator $\mathrm{F}_{s}$ on epigraph $E$.

The sliding method was developed by Berestycki and Nirenberg (\cite{BN1,BN2,BN3}). It was used to establish qualitative properties of solutions for PDEs (mainly involving the regular Laplacian $-\Delta$), such as symmetry, monotonicity and uniqueness $\cdots$. For more literatures on the sliding methods for $-\Delta$, $(-\Delta)^{s}$, $(-\Delta)^{s}_{p}$, $(-\Delta+m^{2})^{s}$ or $\mathrm{D}_{s}$, please refer to \cite{BCN1,BCN2,BHM,BN1,BN2,BN3,CBL,CLiu,CW1,CW2,DQW,DSV}.

\begin{rem}\label{rem20}
By using similar ideas and arguments, one can also develop the method of moving planes and the sliding method for the following general fully nonlinear nonlocal operators:
\begin{equation}\label{fno}
\mathrm{G}_{s}(u)(x):=\inf\left\{P.V.\int_{\mathbb{R}^{n}}\frac{G(u(y)-u(x))}{|A^{-1}(y-x)|^{n+2s}}\mathrm{d}y \,\bigg|\, 0<\theta I\leq A\leq\Theta I\right\},
\end{equation}
\begin{equation}\label{fnov}
\mathbf{G_{s}}(u)(x):=\inf\left\{P.V.\int_{\mathbb{R}^{n}}\frac{G(u(y)-u(x))}{|A^{-1}(y-x)|^{n+2s}}\mathrm{d}y \,\bigg|\, 0<\theta I\leq A\leq\Theta I, \, \mathbf{e}\in EV(A)\right\},
\end{equation}
where $G$ is a local Lipschitz continuous function satisfying $G(0)=0$ and $u$ belongs to some appropriate function space. If $G(t)=|t|^{p-2}t$ with $p\geq2$, we denote $\mathrm{G}_{s}:=\mathrm{F}_{s}^{p}$ and $\mathbf{G_{s}}:=\mathbf{F_{s}^{p}}$. It is clear that, when $G(t)=t$, $\mathrm{G}_{s}$ and $\mathbf{G_{s}}$ degenerate into the uniformly elliptic nonlocal Bellman operator $\mathrm{F}_{s}$ and $\mathbf{F_{s}}$ respectively. We leave these open problems to interested readers.
\end{rem}

\subsection{A typical example: the uniformly elliptic nonlocal Monge-Amp\`{e}re operators $\mathrm{D}_{s}$ and $\mathbf{D_{s}}$}
The uniformly elliptic nonlocal Monge-Amp\`{e}re operator $\mathrm{D}_{s}$ was first introduced by Caffarelli and Charro in \cite{CC}:
\begin{eqnarray}\label{Def-MA}
 && \mathrm{D}_{s}u(x):=\inf\left\{P.V.\int_{\mathbb{R}^{n}}\frac{u(y)-u(x)}{|A^{-1}(y-x)|^{n+2s}}\mathrm{d}y \,\bigg|\, A>0, \det A=1, \lambda_{min}(A)\geq\theta\right\}  \\
 \nonumber && \qquad\quad\,\,\, =\inf\left\{\frac{1}{2}\int_{\mathbb{R}^{n}}\frac{u(x+y)+u(x-y)-2u(x)}{|A^{-1}y|^{n+2s}}\mathrm{d}y \,\bigg|\, A>0, \det A=1, \lambda_{min}(A)\geq\theta\right\},
\end{eqnarray}
where $\theta>0$ is an arbitrarily small constant and $A>0$ means that $A$ is a positive definite square matrix. The conditions $\det A=1$ and $\lambda_{min}(A)\geq\theta$ imply that $\lambda_{max}(A)\leq\theta^{1-n}$. Thus the uniformly elliptic nonlocal Monge-Amp\`{e}re operator $\mathrm{D}_{s}$ is actually a typical example of the uniformly elliptic nonlocal Bellman operator $\mathrm{F}_{s}$ with $\Theta=\theta^{1-n}$. Therefore, all the results in Sections 2-4 in our paper are valid for the uniformly elliptic nonlocal Monge-Amp\`{e}re operators $\mathrm{D}_{s}$ and $\mathbf{D_{s}}$, where $\mathbf{D_{s}}$ is defined by \eqref{Def-MA} with an extra restriction $\mathbf{e}\in EV(A)$ on matrices $A$ for some arbitrarily given vector $\mathbf{e}$.

In \cite{CC}, Caffarelli and Charro also introduced the fractional Monge-Amp\`{e}re operator without uniformly elliptic condition:
\begin{eqnarray}\label{Def-frac}
 && \mathcal{D}_{s}u(x):=\inf\left\{P.V.\int_{\mathbb{R}^{n}}\frac{u(y)-u(x)}{|A^{-1}(y-x)|^{n+2s}}\mathrm{d}y \,\bigg|\, A>0, \det A=1\right\}  \\
 \nonumber && \qquad\quad\,\,\, =\inf\left\{\frac{1}{2}\int_{\mathbb{R}^{n}}\frac{u(x+y)+u(x-y)-2u(x)}{|A^{-1}y|^{n+2s}}\mathrm{d}y \,\bigg|\, A>0, \det A=1\right\}.
\end{eqnarray}
They also proved in Theorem 3.1 in \cite{CC} that, under certain conditions, the uniformly elliptic nonlocal Monge-Amp\`{e}re operator $\mathrm{D}_{s}$ may coincide with the fractional Monge-Amp\`{e}re operator $\mathcal{D}_{s}$. There is another nonlocal Monge-Amp\`{e}re operator introduced by Caffarelli and Silvestre in \cite{CS3}.

The fractional Monge-Amp\`{e}re operator $\mathcal{D}_{s}$ is closely related to the geometrically and physically interesting second order Monge-Amp\`{e}re operator. In fact, Caffarelli and Charro proved in Appendix A in \cite{CC} that, if $u$ is convex, asymptotically linear, then
\begin{equation}\label{convergence}
  \lim\limits_{s\rightarrow1}\left((1-s)\mathcal{D}_{s}u(x)\right)=\det(D^{2}u(x))^{\frac{1}{n}}
\end{equation}
up to a constant factor that depends only on the dimension $n$. For related literatures on the regular second order Monge-Amp\`{e}re equation
\begin{equation}\label{M-A}
  \det \, D^{2}u=f,
\end{equation}
please refer to e.g. \cite{Ca,CNS,DF,G,GTW,JW,TW,Yau,ZW} and the references therein.

\smallskip

In what follows, we will use $C$ to denote a general positive constant that may depend on $n$, $s$, $\theta$ and $\Theta$, and whose value may differ from line to line.

\section{Maximum principles for anti-symmetric functions}
In this section, we will establish various maximum principles for anti-symmetric functions w.r.t. hyper-planes $T$ perpendicular to $\mathbf{e}$ and give some immediate applications. These maximum principles are key ingredients in applying the method of moving planes for the uniformly elliptic nonlocal Bellman operator $\mathbf{F_{s}}$ associated with vector $\mathbf{e}$.

Let $T$ be any given hyper-plane in $\mathbb{R}^{n}$ perpendicular to $\mathbf{e}$ and $\Sigma$ be the half space on one side of the plane $T$ hereafter. Denote the reflection of a point $x$ with respect to $T$ by $\tilde{x}$. For any symmetric matrix $B$ such that $\mathbf{e}\in EV(B)$, one has
\begin{equation}\label{bi}
  |B(x-y)|\leq |B(x-\tilde{y})|,\qquad \forall x,y \in \Sigma.
\end{equation}

We need some basic properties on the uniformly elliptic nonlocal Bellman operators $\mathrm{F}_{s}$ and $\mathbf{F_{s}}$.
\begin{lem}\label{lem0}
For any $0<s<1$, the uniformly elliptic nonlocal Bellman operators $\mathrm{F}_{s}$ and $\mathbf{F_{s}}$ satisfy: \\
a) $\mathrm{F}_{s}$ is invariant under translation and rotation, $\mathbf{F_{s}}$ is invariant under translation and reflection w.r.t. $T$; \\
b) $\mathrm{F}_{s}(u+v)\geq \mathrm{F}_{s}u+\mathrm{F}_{s}v$, \, $\mathbf{F_{s}}(u+v)\geq \mathbf{F_{s}}u+\mathbf{F_{s}}v$.
\end{lem}

The proof of Lemma \ref{lem0} follows directly from the definitions \eqref{Def} and \eqref{Defv} of $\mathrm{F}_{s}$ and $\mathbf{F_{s}}$, we omit the details.

First, we can prove the following strong maximum principle for anti-symmetric functions.
\begin{lem}(Strong maximum principle for anti-symmetric functions)\label{SMP-anti}
Suppose that $w\in\mathcal{L}_{s}(\mathbb{R}^{n})$ satisfying $w\left(\tilde{x}\right)=-w(x)$ and $w\geq0$ in $\Sigma$. If there exists $x_{0}\in\Sigma$ such that, $w(x_{0})=0$, $w$ is $C^{1,1}$ near $x_{0}$ and $\mathbf{F_{s}}w(x_{0})\leq0$, then $w=0$ a.e. in $\mathbb{R}^n$.
\end{lem}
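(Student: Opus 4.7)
The plan is to exploit the anti-symmetry $w(\tilde{x}) = -w(x)$ together with the vanishing $w(x_{0}) = 0$ in order to rewrite $\mathrm{F}_{s}w(x_{0})$ as an infimum over admissible $A$ of integrals over $\Sigma$ with manifestly non-negative integrands. The resulting lower bound $\mathrm{F}_{s}w(x_{0}) \geq 0$, combined with the hypothesis $\mathrm{F}_{s}w(x_{0}) \leq 0$, forces $\mathrm{F}_{s}w(x_{0}) = 0$; strict positivity of the kernel factor will then yield $w \equiv 0$ a.e.

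The $C^{1,1}$ regularity of $w$ near $x_{0}$ together with $w \in \mathcal{L}_{s}(\mathbb{R}^{n})$ makes the principal value integral
\[
L_{A}w(x_{0}) := \mathrm{P.V.}\!\int_{\mathbb{R}^{n}} \frac{w(y) - w(x_{0})}{|A^{-1}(y - x_{0})|^{n+2s}}\, dy
\]
well-defined for every admissible $A$ with $\theta I \leq A \leq \Theta I$; using $w(x_{0}) = 0$ it simplifies to $\mathrm{P.V.}\!\int_{\mathbb{R}^{n}} w(y)/|A^{-1}(y - x_{0})|^{n+2s}\, dy$. I would then split the integration domain into $\Sigma$ and $\Sigma^{c}$, substitute $y = \tilde{z}$ with $z \in \Sigma$ in the second piece (Jacobian one), and apply $w(\tilde{z}) = -w(z)$ to collapse the two halves into the single integral
\[
L_{A}w(x_{0}) = \int_{\Sigma} w(z) \left[ \frac{1}{|A^{-1}(z - x_{0})|^{n+2s}} - \frac{1}{|A^{-1}(\tilde{z} - x_{0})|^{n+2s}} \right] dz.
\]
By the geometric inequality $|A^{-1}(z - x_{0})| \leq |A^{-1}(\tilde{z} - x_{0})|$ for $z, x_{0} \in \Sigma$ recorded just before Lemma~\ref{lem0}, the bracketed factor is non-negative (and strictly positive off a measure-zero set). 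Since $w \geq 0$ on $\Sigma$, we obtain $L_{A}w(x_{0}) \geq 0$ for every admissible $A$, hence $\mathrm{F}_{s}w(x_{0}) = \inf_{A} L_{A}w(x_{0}) \geq 0$; combined with the hypothesis this gives $\mathrm{F}_{s}w(x_{0}) = 0$.

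To finish, I would use compactness and continuity: the admissible class $\{A : \theta I \leq A \leq \Theta I\}$ is compact and $A \mapsto L_{A}w(x_{0})$ is continuous (via the uniform ellipticity bounds on $A$ together with dominated convergence, using $w \in \mathcal{L}_{s}$ and the $C^{1,1}$ control near $x_{0}$), so the infimum is attained at some admissible $A^{*}$ with $L_{A^{*}}w(x_{0}) = 0$. Because for $A^{*}$ the collapsed integrand is non-negative and vanishes only on a measure-zero subset of $\Sigma$, the equality $L_{A^{*}}w(x_{0}) = 0$ forces $w = 0$ a.e.\ on $\Sigma$, and anti-symmetry extends this to $w = 0$ a.e.\ on $\mathbb{R}^{n}$. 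The main obstacle is precisely this last step: verifying the compactness-continuity argument and the a.e.\ strict positivity of the kernel bracket for the minimizing matrix $A^{*}$, which carries the technical weight of the strong-maximum-principle conclusion.
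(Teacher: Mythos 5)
Your proposal is correct and follows essentially the same route as the paper's proof: using that $x_{0}$ is a minimum point with $w(x_{0})=0$, splitting the integral over $\Sigma$ and $\Sigma^{c}$, folding the second piece onto $\Sigma$ via the anti-symmetry, and invoking the kernel comparison $|A^{-1}(x_{0}-y)|\leq|A^{-1}(x_{0}-\tilde{y})|$ recorded just before Lemma~\ref{lem0} to conclude that each $L_{A}w(x_{0})\geq 0$, hence $\mathrm{F}_{s}w(x_{0})=0$. Your closing compactness--continuity argument, which extracts a minimizing matrix $A^{*}$ and uses the a.e.\ strict positivity of the folded kernel for $A^{*}$, is simply a more detailed justification of the step the paper states in one line (``thus we must have $w=0$ a.e.\ in $\Sigma$''), so the two proofs coincide in substance.
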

\begin{proof}
	Since there exists $x_0\in\Sigma$ such that $w(x_0)=\min\limits_{x\in\Sigma}w(x)=0$, it follows that
	\begin{align*}
	0&\geq\ \mathbf{F_{s}}w(x_0) \\
	& =\inf P.V. \int_{\mathbb{R}^{n}} \frac{w(y)-w(x_0)}{|A^{-1}(x_0-y)|^{n+2s}}\mathrm{d}y \\
	&=\inf P.V. \int_{\mathbb{R}^{n}} \frac{w(y)}{|A^{-1}(x_0-y)|^{n+2s}}\mathrm{d}y \\
	& =\inf P.V. \int_{\Sigma} \left(\frac{1}{|A^{-1}(x_0-y)|^{n+2s}}-\frac{1}{|A^{-1}(x_0-\tilde{y})|^{n+2s}}\right)w(y)\mathrm{d}y\\
	&\geq0.
	\end{align*}
	Thus we must have $w=0$ a.e. in $\Sigma$ and hence $w=0$ a.e. in $\mathbb{R}^{n}$. This finishes the proof of Lemma \ref{SMP-anti}.
\end{proof}

\subsection{Maximum principles for anti-symmetric functions in bounded sets}
\begin{thm}[Maximum principle for anti-symmetric functions]\label{MP Anti}
Let $\Omega$ be a bounded open set in $\Sigma$. Assume that $w\in \mathcal{L}_{s}(\mathbb{R}^{n})\cap C_{\text {loc}}^{1,1}(\Omega)$ and is lower semi-continuous on $\overline{\Omega}$. If
	\begin{equation}\label{MP_anti}
	\left\{\begin{array}{ll}{\mathbf{F_{s}} w(x)-c(x)w(x)\leq 0} & {\text {at points} \,\, x\in\Omega \,\, \text{where} \,\, w(x)<0} \\ {w(x) \geq 0} & {\text { in } \Sigma \setminus \Omega} \\ {w\left(\tilde{x}\right)=-w(x)} & {\text { in } \Sigma,}\end{array}\right.
	\end{equation}
	where $c(x)\geq 0$ for any $x\in\left\{x\in\Omega\,|\,w(x)<0\right\}$. Then $w(x) \geq 0$ in $\Omega$.
	
	Furthermore, assume that
	\begin{equation}\label{2MP-anti-2}
	\mathbf{F_{s}}{w}(x)\leq 0 \quad \text{at points} \,\, x\in\Omega \,\, \text{where} \,\, w(x)=0,
	\end{equation}
	then either $w>0$ in $\Omega$ or $w=0$ almost everywhere in $\mathbb{R}^{n}$.
	
	These conclusions hold for unbounded open set $\Omega$ if we further assume that
	$$
	\liminf\limits_{|x| \rightarrow \infty} w(x) \geq 0.
	$$
\end{thm}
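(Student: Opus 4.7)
The plan is to argue by contradiction: suppose that $m:=\inf_{\overline{\Omega}} w < 0$. The boundary/exterior hypothesis $w\geq 0$ on $\Sigma\setminus\Omega$ together with lower semi-continuity on $\overline{\Omega}$ forces the infimum to be attained at some point $x_{0}\in\Omega$ with $w(x_{0})=m<0$. (In the unbounded case, $\liminf_{|x|\to\infty}w(x)\geq 0$ confines the would-be minimum to a large ball, after which the argument is identical.) Since $w\in C^{1,1}_{\rm loc}(\Omega)$, $\mathrm{F}_{s}w(x_{0})$ is well-defined pointwise.

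The core step is to evaluate $\mathrm{F}_{s}w(x_{0})$ using the anti-symmetry of $w$. For each admissible matrix $A$, I would split $\mathbb{R}^{n}=\Sigma\cup\widetilde{\Sigma}$, change variable $y\mapsto \tilde{y}$ in the integral over $\widetilde{\Sigma}$, and use $w(\tilde{y})=-w(y)$ to rewrite the kernel representation as
\begin{equation*}
P.V.\!\int_{\mathbb{R}^{n}}\!\!\frac{w(y)-w(x_{0})}{|A^{-1}(x_{0}-y)|^{n+2s}}dy
=\int_{\Sigma}\!\bigl[w(y)-w(x_{0})\bigr]\bigl[K_{A}(x_{0},y)-K_{A}(x_{0},\tilde y)\bigr]dy\,-\,2w(x_{0})\!\int_{\Sigma}\!K_{A}(x_{0},\tilde y)\,dy,
\end{equation*}
where $K_{A}(x,y):=|A^{-1}(x-y)|^{-(n+2s)}$. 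Now I would exploit two sign properties: at the minimum $w(y)-w(x_{0})\geq 0$, while the elementary inequality $|A^{-1}(x_{0}-y)|\leq|A^{-1}(x_{0}-\tilde y)|$ for $x_{0},y\in\Sigma$ (recorded just before Lemma \ref{lem0}) gives $K_{A}(x_{0},y)-K_{A}(x_{0},\tilde y)\geq 0$. Hence the first bulk term is nonnegative, leaving
\begin{equation*}
\mathrm{F}_{s}w(x_{0})\;\geq\;-2w(x_{0})\,\inf_{A}\int_{\Sigma}K_{A}(x_{0},\tilde y)\,dy.
\end{equation*}
The uniform ellipticity $\theta I\leq A\leq\Theta I$ yields $|A^{-1}z|\leq\theta^{-1}|z|$, so $K_{A}(x_{0},\tilde y)\geq\theta^{n+2s}|x_{0}-\tilde y|^{-(n+2s)}$ uniformly in $A$, and the integral $\int_{\Sigma}|x_{0}-\tilde y|^{-(n+2s)}dy$ is a strictly positive finite constant depending only on $x_{0}$ and $\Sigma$. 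Thus $\mathrm{F}_{s}w(x_{0})>0$. On the other hand the assumption at the negativity set gives $\mathrm{F}_{s}w(x_{0})\leq c(x_{0})w(x_{0})\leq 0$, a contradiction. This proves $w\geq 0$ in $\Omega$.

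For the second (strong) part, once $w\geq 0$ in $\Sigma$ is known, the extra hypothesis \eqref{2MP-anti-2} at the zero set of $w$ places us exactly in the scope of Lemma \ref{SMP-anti}: if $w$ is not strictly positive in $\Omega$, then some $x_{0}\in\Omega$ realizes $w(x_{0})=0=\min_{\Sigma}w$, $w$ is $C^{1,1}$ near $x_{0}$, and $\mathrm{F}_{s}w(x_{0})\leq 0$, so Lemma \ref{SMP-anti} forces $w\equiv 0$ a.e.\ in $\mathbb{R}^{n}$. The unbounded version is handled by the same argument, with the $\liminf$ condition guaranteeing existence of a minimizer in $\overline{\Omega}$; no other modification is needed.

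The main obstacle I anticipate is the careful handling of the principal value together with the infimum over $A$: the per-$A$ integral identity must be justified (using $w\in C^{1,1}_{\rm loc}$ near $x_{0}$ to kill the singularity, and $w\in\mathcal{L}_{s}$ to control the tail), and the lower bound on $\int_{\Sigma}K_{A}(x_{0},\tilde y)\,dy$ must be uniform in $A$ so that the bound survives taking the infimum. Both are handled by the ellipticity envelope $\theta I\leq A\leq \Theta I$, but the book-keeping is where one could easily slip.
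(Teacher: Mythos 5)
Your proposal is correct and follows essentially the same route as the paper's own proof: contradiction at an interior negative minimum, the anti-symmetric splitting of the kernel over $\Sigma$ with the comparison $|A^{-1}(x_{0}-y)|\leq|A^{-1}(x_{0}-\tilde y)|$, the resulting bound $\mathrm{F}_{s}w(x_{0})\geq-2w(x_{0})\inf_{A}\int_{\Sigma}|A^{-1}(x_{0}-\tilde y)|^{-(n+2s)}dy>0$, and Lemma \ref{SMP-anti} for the strong conclusion. Your added remark that the ellipticity bound $|A^{-1}z|\leq\theta^{-1}|z|$ makes the lower bound uniform in $A$ is exactly the point the paper uses implicitly, so there is nothing missing.
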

\begin{proof}
	If $w$ is not nonnegative, then the lower semi-continuity of $w$ on $\overline{\Omega}$ indicates that
	there exists a $\hat{x}\in \overline{\Omega}$ such that
	$$
	w\left(\hat{x}\right)=\min _{\overline{\Omega}} w<0.
	$$
	One can further deduce from \eqref{MP_anti} that $\hat{x}$ is in the interior of $\Omega$. It follows that
	\begin{equation}\label{P:MP_inequa}
	\begin{aligned}
	&\quad \mathbf{F_{s}}w(\hat{x})\nonumber\\
	&=\inf P.V.\int_{\mathbb{R}^{n}}\frac{w(y)-w(\hat{x})}{|A^{-1}(\hat{x}-y)|^{n+2s}} \mathrm{d}y\nonumber\\
	&=\inf\left[ P.V.\int_{\Sigma}\frac{w(y)-w(\hat{x})}{|A^{-1}(\hat{x}-y)|^{n+2s}} \mathrm{d}y-\int_{\Sigma}\frac{w(\hat{x})+w(y)}{|A^{-1}(\hat{x}-\tilde{y})|^{n+2s}} \mathrm{d}y\right]\nonumber\\
	&=\inf \left[ P.V.\int_{\Sigma}\left(\frac{1}{|A^{-1}(\hat{x}-y)|^{n+2s}}-\frac{1}{|A^{-1}(\hat{x}-\tilde{y})|^{n+2s}}\right)\left(w(y)-w(\hat{x})\right) \mathrm{d}y\right.\\
	&\qquad\quad \left.-2w(\hat{x})\int_{\Sigma}\frac{1}{|A^{-1}(\hat{x}-\tilde{y})|^{n+2s}} \mathrm{d}y\right]\nonumber\\
	&\geq -2w(\hat{x})\inf\int_{\Sigma}\frac{1}{|A^{-1}(\hat{x}-\tilde{y})|^{n+2s}} \mathrm{d}y\nonumber\\
	&>0,\nonumber
	\end{aligned}
	\end{equation}
	which contradicts \eqref{MP_anti}. Hence $w(x)\geq 0$ in $\Omega$.
	
	Now we have proved that $w(x)\geq 0$ in $\Sigma$. If there is some point $\bar{x}\in\Omega$ such that $w\left(\bar{x}\right)=0$, then from \eqref{2MP-anti-2} and Lemma \ref{SMP-anti}, we derive immediately $w=0$ almost everywhere in $\mathbb{R}^{n}$. This completes the proof of Theorem \ref{MP Anti}.
\end{proof}

\begin{rem}\label{rem12}
It is clear from the proof that, in Theorem \ref{MP Anti}, the assumptions ``$w$ is lower semi-continuous on $\overline{\Omega}$" and ``$w\geq0$ in $\Sigma\setminus\Omega$" can be weaken into: ``if $w<0$ somewhere in $\Sigma$, then the negative minimum $\inf\limits_{\Sigma}w(x)$ can be attained in $\Omega$", the same conclusions are still valid. One can also notice that, we only need to assume that $c(x)\geq0$ at points $x\in\Omega$ where $w(x)=\inf\limits_{\Sigma}w<0$ in Theorems \ref{MP Anti}.
\end{rem}

\begin{thm}[Narrow region principle]\label{2NRP}
Let $\Omega$ be a bounded open set in $\Sigma$ which can be contained in the region between $T$ and $T_{\Omega}$, where $T_{\Omega}$ is a hyper-plane that is parallel to $T$. Let $d(\Omega):=dist(T,T_{\Omega})$. Suppose that $w\in \mathcal{L}_{s}(\mathbb{R}^{n})\cap C_{loc}^{1,1}(\Omega)$ and is lower semi-continuous on $\overline{\Omega}$, and satisfies
	\begin{equation}\label{NRP-anti}
	\left\{\begin{array}{ll}{\mathbf{F_{s}} w(x)-c(x)w(x)\leq 0} & {\text {at points} \,\, x\in\Omega \,\, \text{where} \,\, w(x)<0} \\ {w(x) \geq 0} & {\text {in } \Sigma \backslash \Omega} \\ {w\left(\tilde{x}\right)=-w(x)} & {\text {in } \Sigma,}\end{array}\right.
	\end{equation}
	where $c(x)$ is uniformly bounded from below (w.r.t. $d(\Omega)$) in $\{x\in\Omega\,|\,w(x)<0\}$. There exists a constant $C_{n,s,\theta}>0$ such that, if we assume $\Omega$ is narrow in the sense that
	\begin{equation}\label{2NRP-3}
	d(\Omega)^{2s}\left(-\inf\limits_{\left\{x\in\Omega\,|\,w(x)<0\right\}}c(x)\right)<C_{n,s,\theta},
	\end{equation}
	then, $w(x) \geq 0 \text { in } \Omega$. Furthermore, assume that
	\begin{equation}\label{2NRP-4}
	\mathbf{F_{s}}{w}(x)\leq 0 \quad \text{at points} \,\, x\in\Omega \,\, \text{where} \,\, w(x)=0,
	\end{equation}
	then either $w>0$ in $\Omega$ or $w=0$ almost everywhere in $\mathbb{R}^{n}$.
	
	These conclusions hold for unbounded open set $\Omega$ if we further assume that
	$$
	\liminf _{|x| \rightarrow \infty} w(x) \geq 0.
	$$
\end{thm}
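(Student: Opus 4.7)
The plan is to mimic the proof of Theorem \ref{MP Anti} that was just given, but to exploit the narrowness of $\Omega$ to gain a quantitative lower bound on the ``nonlocal boundary'' integral $\int_{\Sigma}|A^{-1}(\hat{x}-\tilde{y})|^{-(n+2s)}dy$. First, I would argue by contradiction: if $w$ is negative somewhere in $\Omega$, the lower semi-continuity on $\overline{\Omega}$ together with $w\geq 0$ on $\Sigma\setminus\Omega$ forces the negative minimum to be attained at some $\hat{x}$ in the interior of $\Omega$ (in the unbounded case, $\liminf_{|x|\to\infty}w\geq 0$ rules out escape to infinity). At $\hat{x}$, I would split the defining integral for $\mathrm{F}_{s}w(\hat{x})$ over $\Sigma$ and its reflection exactly as in the proof of Theorem \ref{MP Anti}, producing
\begin{equation*}
\mathrm{F}_{s}w(\hat{x}) \geq \inf_{A}P.V.\int_{\Sigma}\Bigl(\tfrac{1}{|A^{-1}(\hat{x}-y)|^{n+2s}}-\tfrac{1}{|A^{-1}(\hat{x}-\tilde{y})|^{n+2s}}\Bigr)\bigl(w(y)-w(\hat{x})\bigr)\,dy \;-\; 2w(\hat{x})\inf_{A}\int_{\Sigma}\tfrac{dy}{|A^{-1}(\hat{x}-\tilde{y})|^{n+2s}}.
\end{equation*}
The first integral is nonnegative because $|A^{-1}(\hat{x}-y)|\leq|A^{-1}(\hat{x}-\tilde{y})|$ for $y\in\Sigma$ (the inequality recorded just before Lemma \ref{lem0}) and because $\hat{x}$ is the minimum.

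The central step, which I expect to be the main technical point, is to show
\begin{equation*}
\inf_{0<\theta I\leq A\leq\Theta I}\int_{\Sigma}\frac{dy}{|A^{-1}(\hat{x}-\tilde{y})|^{n+2s}} \;\geq\; \frac{C_{n,s,\theta}}{d(\Omega)^{2s}}.
\end{equation*}
Using $|A^{-1}z|\leq \theta^{-1}|z|$, the integrand is bounded below by $\theta^{n+2s}|\hat{x}-\tilde{y}|^{-(n+2s)}$, which eliminates $A$ from the estimate and reduces matters to a purely geometric integral over the half-space opposite to $\Sigma$. After reflecting $z=\tilde{y}$ and, say, placing $T=\{x_{1}=0\}$ with $\hat{x}=(d_{\hat{x}},0,\dots,0)$, the standard scaling $v=z/d_{\hat{x}}$ yields $\int_{\Sigma^{c}}|\hat{x}-z|^{-(n+2s)}dz = \tilde C_{n,s}\,d_{\hat{x}}^{-2s}$. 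Since $\hat{x}\in\Omega$ lies in the strip between $T$ and $T_{\Omega}$, one has $d_{\hat{x}}:=\mathrm{dist}(\hat{x},T)\leq d(\Omega)$, so the desired bound follows with $C_{n,s,\theta}:=\theta^{n+2s}\tilde C_{n,s}$.

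Putting things together with the hypothesis $\mathrm{F}_{s}w(\hat{x})\leq c(\hat{x})w(\hat{x})$ at the negative-minimum point, and dividing by $w(\hat{x})<0$, I obtain
\begin{equation*}
-c(\hat{x}) \;\geq\; \frac{2\,C_{n,s,\theta}}{d(\Omega)^{2s}},
\end{equation*}
so that $d(\Omega)^{2s}\bigl(-\inf_{\{w<0\}}c\bigr)\geq 2C_{n,s,\theta}$, directly contradicting the narrowness condition \eqref{2NRP-3} (after, if needed, renaming the constant). This proves $w\geq 0$ in $\Omega$. For the dichotomy part, if additionally $w(\bar x)=0$ at some $\bar x\in\Omega$ and $\mathrm{F}_{s}w(\bar x)\leq 0$, I would apply the strong maximum principle for anti-symmetric functions (Lemma \ref{SMP-anti}) to conclude $w\equiv 0$ a.e. in $\mathbb{R}^{n}$. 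The unbounded case requires no new idea: the assumption $\liminf_{|x|\to\infty}w\geq 0$ guarantees that if $w$ is negative somewhere in $\Omega$, the infimum is still attained in the interior, after which the same contradiction argument applies verbatim.
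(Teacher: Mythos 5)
Your proposal is correct and follows essentially the same route as the paper: contradiction at an interior negative minimum, the same anti-symmetric decomposition of $\mathrm{F}_{s}w(\hat{x})$ as in Theorem \ref{MP Anti}, a lower bound of order $d(\Omega)^{-2s}$ for $\inf_{A}\int_{\Sigma}|A^{-1}(\hat{x}-\tilde{y})|^{-(n+2s)}dy$, and Lemma \ref{SMP-anti} for the dichotomy. The only (harmless) difference is that you evaluate the reflected half-space integral exactly by scaling and use $\mathrm{dist}(\hat{x},T)\leq d(\Omega)$, whereas the paper bounds it from below by integrating over the auxiliary box $D$ as in \eqref{2NRP-2}; both give the same quantitative conclusion, up to renaming the constant in \eqref{2NRP-3}.
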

\begin{proof}
	Without loss of generalities, we may assume that
	\[ \mathbf{e}=\mathbf{e_1}:=(1,0,\cdots,0),\quad T=\{x\in\mathbb{R}^{n}\,|\,x_{1}=0\} \quad \text{and} \quad \Sigma=\{x\in\mathbb{R}^{n}\,|\,x_{1}<0\},\]
	and hence $\Omega\subseteq\{x\in\mathbb{R}^{n}\,|-d(\Omega)<x_{1}<0\}$.
	
	If $w$ is not nonnegative in $\Omega$, then the lower semi-continuity of $w$ on $\overline{\Omega}$ indicates that, there exists a $\bar{x}\in\overline{\Omega}$ such that
	$$
	w\left(\bar{x}\right)=\min\limits_{\overline{\Omega}} w<0.
	$$
	One can further deduce from \eqref{NRP-anti} that $\bar{x}$ is in the interior of $\Omega$. It follows that
	\begin{equation}\label{2NRP-1}
	\begin{aligned}
	&\quad \mathbf{F_{s}}w(\bar{x}) \\
	&=\inf P.V.\int_{\mathbb{R}^{n}}\frac{w(y)-w(\bar{x})}{|A^{-1}(\bar{x}-y)|^{n+2s}} \mathrm{d}y \\
	&=\inf\left[ P.V.\int_{\Sigma}\frac{w(y)-w(\bar{x})}{|A^{-1}(\bar{x}-y)|^{n+2s}} \mathrm{d}y-\int_{\Sigma}\frac{w(\bar{x})+w(y)}{|A^{-1}(\bar{x}-\tilde{y})|^{n+2s}} \mathrm{d}y\right] \\
	&=\inf \left[ P.V.\int_{\Sigma}\left(\frac{1}{|A^{-1}(\bar{x}-y)|^{n+2s}}-\frac{1}{|A^{-1}(\bar{x}-\tilde{y})|^{n+2s}}\right)\left(w(y)-w(\bar{x})\right) \mathrm{d}y\right.\\
	&\qquad\quad \left.-2w(\bar{x})\int_{\Sigma}\frac{1}{|A^{-1}(\bar{x}-\tilde{y})|^{n+2s}} \mathrm{d}y\right] \\
	&\geq -2w(\bar{x})\inf\int_{\Sigma}\frac{1}{|A^{-1}(\bar{x}-\tilde{y})|^{n+2s}}\mathrm{d}y \\
	&\geq -C_{n,s,\theta}w(\bar{x})\int_{\Sigma}\frac{1}{|\bar{x}-\tilde{y}|^{n+2s}}\mathrm{d}y
	\end{aligned}
	\end{equation}
	Let
	$$D:=\left\{y=(y_1,y')\in\mathbb{R}^{n} \mid d(\Omega)<y_{1}-(\bar{x})_{1}<2d(\Omega),\left|y^{\prime}-\left(\bar{x}\right)^{\prime}\right|<2d(\Omega)\right\}.$$
	 Denote $t:=y_{1}-(\bar{x})_{1}$, $\tau:=\left|y^{\prime}-\left(\bar{x}\right)^{\prime}\right|$, we have
	\begin{equation}\label{2NRP-2}
	\begin{aligned}
	&\int_{\Sigma}\frac{1}{|\bar{x}-\tilde{y}|^{n+2s}} \mathrm{d}y\\
	\geq &\int_{D}\frac{1}{|\bar{x}-y|^{n+2s}} \mathrm{d}y \\
	=&\int_{d(\Omega)}^{2d(\Omega)}\int_{0}^{2d(\Omega)}\frac{\sigma_{n-1}\tau^{n-2}d\tau}{\left(t^{2}+\tau^{2}\right)^{\frac{n}{2}+s}}dt
	=\int_{d(\Omega)}^{2d(\Omega)}\int_{0}^{\frac{2d(\Omega)}{t}}\frac{\sigma_{n-1}(t\rho)^{n-2}t d\rho}{t^{n+2s}\left(1+\rho^{2}\right)^{\frac{n}{2}+s}}dt \\
	=&\int_{d(\Omega)}^{2d(\Omega)}\frac{1}{t^{1+2s}}\int_{0}^{\frac{2d(\Omega)}{t}}\frac{\sigma_{n-1}\rho^{n-2} d\rho}{\left(1+\rho^{2}\right)^{\frac{n}{2}+s}}dt
	\geq \int_{d(\Omega)}^{2d(\Omega)}\frac{1}{t^{1+2s}}\int_{0}^{1}\frac{\sigma_{n-1}\rho^{n-2} d\rho}{\left(1+\rho^{2}\right)^{\frac{n}{2}+s}}dt \\
	\geq&C_{n,s}\int_{d(\Omega)}^{2d(\Omega)}\frac{1}{t^{1+2s}}dt=\frac{C_{n,s}}{d(\Omega)^{2s}},
	\end{aligned}
	\end{equation}
	where we have used the substitution $\rho:=\tau/t$ and $\sigma_{n-1}$ denotes the area of the unit sphere in $\mathbb{R}^{n-1}$. Since $c(x)$ is uniformly bounded from below (w.r.t. $d(\Omega)$) in $\{x\in\Omega\,|\,w(x)<0\}$, then, from \eqref{2NRP-3}, \eqref{2NRP-1} and \eqref{2NRP-2}, we get
	$$
	\mathbf{F_{s}} w\left(\bar{x}\right)-c\left(\bar{x}\right)w\left(\bar{x}\right)
	\geq\left[-\frac{C_{n,s,\theta}}{d(\Omega)^{2s}}-\inf\limits_{\{x\in\Omega\,|\,w(x)<0\}}c(x)\right]w\left(\bar{x}\right)>0,
	$$
	which contradicts \eqref{NRP-anti}.
	
Now we have proved that $w(x)\geq 0$ in $\Sigma$. If there is some point $\bar{x}\in\Omega$ such that $w\left(\bar{x}\right)=0$, then from \eqref{2NRP-4} and Lemma \ref{SMP-anti}, we derive immediately $w=0$ almost everywhere in $\mathbb{R}^{n}$. This finishes the proof of Theorem \ref{2NRP}.
\end{proof}
\begin{rem}\label{rem13}
It is clear from the proof that, in Theorem \ref{2NRP}, the assumptions ``$w$ is lower semi-continuous on $\overline{\Omega}$" and ``$w\geq0$ in $\Sigma\setminus\Omega$" can be weaken into: ``if $w<0$ somewhere in $\Sigma$, then the negative minimum $\inf\limits_{\Sigma}w(x)$ can be attained in $\Omega$", the same conclusions are still valid. One can also notice that, in Theorem \ref{2NRP}, we only need to assume that $c(x)$ is uniformly bounded from below at the negative minimum points of $w$ and $\inf\limits_{\left\{x\in\Omega\,|\,w(x)<0\right\}}c(x)$ can be replaced by the infimum of $c(x)$ over the set of negative minimum points of $w$ in \eqref{2NRP-3}.
\end{rem}

\subsection{Maximum principles for anti-symmetric functions in unbounded sets and immediate applications}

\begin{thm}[Decay at infinity (I)]\label{P:decay}
	Suppose $0\notin\Sigma$. Let $\Omega$ be an unbounded open set in $\Sigma$. Assume $w\in \mathcal{L}_{s}(\mathbb{R}^{n})\cap C_{loc}^{1,1}(\Omega)$ is a solution of
	\begin{equation}\label{P:decay_eq}
	\left\{\begin{array}{ll}{\mathbf{F_{s}} w(x)-c(x)w(x)\leq 0} & {\text {at points} \,\, x\in\Omega \,\, \text{where} \,\, w(x)<0} \\ {w(x) \geq 0} & {\text { in } \Sigma \backslash \Omega} \\ {w\left(\tilde{x}\right)=-w(x)} & {\text { in } \Sigma}\end{array}\right.
	\end{equation}
	with
	\begin{equation}\label{decay_con}
	\liminf\limits_{\substack{x\in\Omega,\,w(x)<0 \\ |x| \rightarrow+\infty}}|x|^{2s}c(x)>-\frac{C_{n,s,\theta}}{4},
	\end{equation}
	where $C_{n,s,\theta}$ is the same constant as in the last inequality in \eqref{Decay-2}. Then there exists a constant $R_{0}>0$ (depending only on $c(x)$, $\theta$, $n$ and $s$, but independent of $w$ and $\Sigma$) such that, if $\hat{x}\in\Omega$ satisfying
	$$
	w\left(\hat{x}\right)=\min _{\overline{\Omega}} w(x)<0,
	$$
	then $\left|\hat{x}\right|\leq R_{0}$.
\end{thm}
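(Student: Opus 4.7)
\smallskip

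The plan is to argue by contradiction and mimic the integral computation used in the proof of the Narrow Region Principle (Theorem \ref{2NRP}), but this time extracting decay in $|\hat{x}|$ rather than in the width $d(\Omega)$. Fix a point $\hat{x}\in\Omega$ with $w(\hat{x})=\min_{\overline{\Omega}}w<0$. Since $w\geq 0$ on $\Sigma\setminus\Omega$ and $w(\hat{x})<0$, the point $\hat{x}$ lies in the interior of $\Omega$, so $\mathrm{F}_{s}w(\hat{x})$ is classically defined.

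First I would rewrite $\mathrm{F}_{s}w(\hat{x})$ using the anti-symmetry $w(\tilde y)=-w(y)$ exactly as in \eqref{2NRP-1}, splitting $\mathbb{R}^n=\Sigma\cup\tilde\Sigma$ and symmetrizing, to obtain
\[
\mathrm{F}_{s}w(\hat{x})=\inf\!\left[P.V.\!\int_{\Sigma}\!\Bigl(\tfrac{1}{|A^{-1}(\hat{x}-y)|^{n+2s}}-\tfrac{1}{|A^{-1}(\hat{x}-\tilde y)|^{n+2s}}\Bigr)(w(y)-w(\hat{x}))\,dy-2w(\hat{x})\!\int_{\Sigma}\!\tfrac{dy}{|A^{-1}(\hat{x}-\tilde y)|^{n+2s}}\right].
\]
For $y\in\Sigma$ one has $|A^{-1}(\hat{x}-y)|\leq|A^{-1}(\hat{x}-\tilde y)|$, so the kernel factor is nonnegative; together with $w(y)\geq w(\hat{x})$ on $\Sigma$, the first integral is nonnegative. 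Hence
\[
\mathrm{F}_{s}w(\hat{x})\;\geq\;-2w(\hat{x})\,\inf_{A}\int_{\Sigma}\frac{dy}{|A^{-1}(\hat{x}-\tilde y)|^{n+2s}}.
\]

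The crucial step is the lower bound on this remaining integral. Using $|A^{-1}z|\leq\theta^{-1}|z|$ and changing variables $z=\tilde y$, it reduces to bounding $\int_{\tilde\Sigma}dz/|\hat{x}-z|^{n+2s}$ from below by $C_{n,s,\theta}/|\hat{x}|^{2s}$. Let $r:=\mathrm{dist}(\hat{x},T)=\tfrac{1}{2}|\hat{x}-\tilde{\hat{x}}|$; then $B_{r}(\tilde{\hat{x}})\subset\tilde\Sigma$, and on this ball $|\hat{x}-z|\leq 3r$, which yields
\[
\int_{\tilde\Sigma}\frac{dz}{|\hat{x}-z|^{n+2s}}\;\geq\;\int_{B_{r}(\tilde{\hat{x}})}\frac{dz}{|\hat{x}-z|^{n+2s}}\;\geq\;\frac{c_{n}}{r^{2s}}.
\]
Here the hypothesis $0\notin\Sigma$ enters decisively: since $\hat{x}\in\Sigma$ and $0\in\overline{\tilde\Sigma}$, the segment from $0$ to $\hat{x}$ meets $T$, giving $r\leq|\hat{x}|$ and hence $r^{-2s}\geq|\hat{x}|^{-2s}$. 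This produces
\[
\mathrm{F}_{s}w(\hat{x})\;\geq\;-\,\frac{C_{n,s,\theta}}{|\hat{x}|^{2s}}\,w(\hat{x}).
\]

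Finally, combining with the differential inequality $\mathrm{F}_{s}w(\hat{x})-c(\hat{x})w(\hat{x})\leq 0$ and dividing by the negative number $w(\hat{x})$ (which reverses the inequality) yields
\[
|\hat{x}|^{2s}c(\hat{x})\;\leq\;-\,C_{n,s,\theta}.
\]
Comparing with \eqref{decay_con}, which forces $|x|^{2s}c(x)>-C_{n,s,\theta}/4$ for all sufficiently large $|x|$ with $w(x)<0$, we conclude that $|\hat{x}|$ must be bounded by some $R_{0}>0$ depending only on $c(x)$, $\theta$, $n$, $s$. I expect the main technical delicacy to be matching constants: one must show that the $C_{n,s,\theta}$ extracted from the ball estimate is (at least) four times the threshold appearing in \eqref{decay_con}, which is exactly why the hypothesis is written with the factor $1/4$. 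Everything else is a careful bookkeeping of the anti-symmetric splitting, already done in Theorem \ref{2NRP}.
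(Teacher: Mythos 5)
Your proposal is correct and follows essentially the same route as the paper: the anti-symmetric decomposition of $\mathrm{F}_{s}w(\hat{x})$ as in \eqref{2NRP-1}, a lower bound of the reflected-kernel integral of order $|\hat{x}|^{-2s}$ obtained from a ball in the reflected half-space (the paper uses a ball of radius $|\hat{x}|$ centered at $\bigl(2|\hat{x}|+(\hat{x})_{1},(\hat{x})'\bigr)$, you use $B_{r}(\tilde{\hat{x}})$ with $r=\mathrm{dist}(\hat{x},T)\leq|\hat{x}|$, which is only a cosmetic variation and exploits $0\notin\Sigma$ in the same way), and then the contradiction between $|\hat{x}|^{2s}c(\hat{x})\leq-C_{n,s,\theta}$ and the liminf condition \eqref{decay_con} for $|\hat{x}|$ large. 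The concluding bookkeeping with the constant is also handled correctly, so no changes are needed.
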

\begin{proof}
	Without loss of generalities, we may assume that $\mathbf{e}=\mathbf{e_1}$ and  for some $\lambda\leq0$,
	\[T=\{x\in\mathbb{R}^{n}\,|\,x_{1}=\lambda\} \quad \text{and} \quad \Sigma=\{x\in\mathbb{R}^{n}\,|\,x_{1}<\lambda\}.\]
	
	Since $w\in\mathcal{L}_{s}(\mathbb{R}^{n})\cap C_{loc}^{1,1}(\Omega)$ and $\hat{x}\in\Omega$ satisfying $w\left(\hat{x}\right)=\min\limits_{\overline{\Omega}} w(x)<0$, through similar calculations as \eqref{2NRP-1}, we get
	\begin{equation}\label{Decay-1}
	\mathbf{F_{s}} w\left(\hat{x}\right)
	\geq -2w(\hat{x})\inf\int_{\Sigma}\frac{1}{|A^{-1}(\hat{x}-\tilde{y})|^{n+2s}} \mathrm{d}y.
	\end{equation}
	Note that $\lambda\leq0$ and $\hat{x}\in\Omega$, it follows that $B_{\left|\hat{x}\right|}\left(\bar{x}\right) \subset \left\{x \in \mathbb{R}^{n} | x_{1}>\lambda\right\}$, where $\bar{x}:=\left(2\left|\hat{x}\right|+(\hat{x})_{1},\left(\hat{x}\right)^{\prime}\right)$. Thus we derive that,
	\begin{equation}\label{Decay-2}
	\begin{aligned}
	&\inf\int_{\Sigma}\frac{1}{|A^{-1}(\hat{x}-\tilde{y})|^{n+2s}} \mathrm{d}y\\
	\geq &\inf\int_{B_{\left|\hat{x}\right|}\left(\bar{x}\right)}\frac{1}{|A^{-1}(\hat{x}-y)|^{n+2s}} \mathrm{d}y\\
	\geq &C_{n,s,\theta}\int_{B_{\left|\hat{x}\right|}\left(\bar{x}\right)}\frac{1}{|\hat{x}-y|^{n+2s}} \mathrm{d}y \\
	\geq &C_{n,s,\theta}\int_{B_{\left|\hat{x}\right|}\left(\bar{x}\right)}\frac{1}{3^{n+2s}|\hat{x}|^{n+2s}} \mathrm{d}y\\
	\geq &\frac{C_{n,s,\theta}}{|\hat{x}|^{2s}}.
	\end{aligned}
	\end{equation}
	Then we can deduce from \eqref{P:decay_eq}, \eqref{Decay-1} and \eqref{Decay-2} that
	\begin{align}\label{Decay-3}
	0\geq \mathbf{F_{s}} w(\hat{x})-c(\hat{x})w(\hat{x})\geq\left[-\frac{C_{n,s,\theta}}{|\hat{x}|^{2s}}-c(\hat{x})\right]w(\hat{x}).
	\end{align}
	It follows from $w(\hat{x})<0$ and \eqref{Decay-3} that
	\begin{equation}\label{Decay-4}
	|\hat{x}|^{2s}c(\hat{x})\leq-C_{n,s,\theta}<0.
	\end{equation}
	From \eqref{decay_con}, we infer that there exists a $R_{0}$ sufficiently large such that, for any $|x|>R_{0}$,
	\begin{equation}\label{Decay-5}
	|x|^{2s}c(x)\geq-\frac{C_{n,s,\theta}}{2}.
	\end{equation}
	Combining \eqref{Decay-4} and \eqref{Decay-5}, we arrive at $|\hat{x}|\leq R_{0}$. This completes the proof of Theorem \ref{P:decay}.
\end{proof}

\begin{rem}\label{rem14}
It is clear from the proofs of Theorems \ref{MP Anti}, \ref{2NRP} and \ref{P:decay} that, the assumption ``$\mathbf{F_{s}}w(x)-c(x)w(x)\leq0$ at points $x\in\Omega$ where $w(x)<0$" can be weaken into: ``$\mathbf{F_{s}}w(x)-c(x)w(x)\leq0$ at points $x\in\Omega$ where $w(x)=\inf\limits_{\Sigma}w<0$", the same conclusions in Theorems \ref{MP Anti}, \ref{2NRP} and \ref{P:decay} are still valid.
\end{rem}

\begin{thm}[Maximum principle for anti-symmetric functions in unbounded domains]\label{MP_anti_ubdd}
Assume that $w \in \mathcal{L}_{s}(\mathbb{R}^{n})\cap C_{\text {loc}}^{1,1}(\Sigma)$ is bounded from below in $\Sigma$ and $w\left(\tilde{x}\right)=-w(x)$ in $\Sigma$, where $\tilde{x}$ is the reflection of $x$ with respect to $T$. Suppose that, at any points $x \in\Sigma$ such that $w(x)<0$, $w$ satisfies
	\begin{equation}\label{MP-1}
		\mathbf{F_{s}} w(x)-c(x)w(x)\leq 0,
	\end{equation}
	where $c(x)\geq 0$ in $\{x\in\Sigma \mid w(x)<0\}$. Then
	\begin{equation}\label{MP-2}
		w(x) \geq 0, \quad \forall x \in\Sigma.
	\end{equation}
	Furthermore, assume that
	\begin{equation}\label{MP-condition}
		\mathbf{F_{s}}{w}(x)\leq0 \quad \text{at points} \,\, x\in\Sigma \,\, \text{where} \,\, w(x)=0,
	\end{equation}
	then either $w>0$ in $\Sigma$ or $w=0$ in $\mathbb{R}^{n}$.
\end{thm}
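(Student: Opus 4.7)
I argue by contradiction, extending the interior-minimum calculation from the proof of Theorem \ref{MP Anti} to the unbounded setting via a compactness step.

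Suppose $w\not\geq 0$ on $\Sigma$; since $w$ is bounded below, $m:=\inf_{\Sigma}w$ is a finite negative real number. Choose a minimizing sequence $\{x_k\}\subset\Sigma$ with $w(x_k)\to m$, so $w(x_k)<m/2<0$ for large $k$. Place coordinates so $T=\{x_1=0\}$ and $\Sigma=\{x_1<0\}$, and write $x_k=(-d_k,x'_k)$. Invariance of $\mathrm{F}_s$, of the anti-symmetry about $T$, and of \eqref{MP-1} under translations parallel to $T$ allows me to replace $w$ by the $k$-dependent tangential shift $w(\cdot+(0,x'_k))$ and assume $x_k=(-d_k,0)$; a diagonal / Arzelà--Ascoli argument based on interior $C^{1,1}_{loc}$-bounds then produces a single anti-symmetric limit function, satisfying the same hypotheses, for which the minimizing sequence lies on the normal axis $\{x'=0\}$.

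The central step is to show $\{d_k\}$ stays in a compact sub-interval of $(0,+\infty)$. Using the anti-symmetric decomposition from the proof of Theorem \ref{MP Anti} at $x_k$, with $K_A(z):=|A^{-1}z|^{-(n+2s)}$, I rewrite $\mathrm{F}_s w(x_k)$ as the infimum over admissible $A$ of
\[
P.V.\int_\Sigma (w(y)-w(x_k))\bigl(K_A(x_k-y)-K_A(x_k-\tilde y)\bigr)\,dy-2w(x_k)\int_\Sigma K_A(x_k-\tilde y)\,dy.
\]
Since $w(y)-w(x_k)\geq m-w(x_k)=:-\epsilon_k$ with $\epsilon_k\to 0^+$, the first (principal value) integral contributes at least $-\epsilon_k R(x_k)$, where $R(x_k)$ is controlled uniformly for $d_k$ in a compact sub-interval of $(0,+\infty)$ (the PV singularity at $y=x_k$ being handled by the $C^{1,1}_{loc}$-regularity of $w$, via Taylor cancellation of the linear term and integrability of the quadratic remainder for $s<1$). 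The second term, by the estimate \eqref{Decay-2}, satisfies
\[
-2w(x_k)\inf_A\int_\Sigma K_A(x_k-\tilde y)\,dy\geq \frac{C_{n,s,\theta}\,|m|}{d_k^{2s}}.
\]
Inserting these bounds into $\mathrm{F}_sw(x_k)-c(x_k)w(x_k)\leq 0$ (with $c(x_k)\geq 0$ and $w(x_k)<m/2$) excludes $d_k\to 0$, since the $d_k^{-2s}$ blow-up would then dominate the $o(1)$ error; the case $d_k\to+\infty$ is excluded by noting that a translated sub-limit of the $w_k$'s yields an anti-symmetric function whose negative minimum is attained at an interior point, whereupon Theorem \ref{P:decay} applies (as $c\geq 0$ trivially satisfies \eqref{decay_con}) and forces the minimizer to be at finite distance from $T$.

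Extracting a convergent subsequence, $x_k\to x_\infty=(-d_\infty,0)\in\Sigma$ with $d_\infty>0$; by $C^{1,1}_{loc}$ continuity, $w(x_\infty)=m<0$, so the infimum is attained at an interior point. The calculation from the proof of Theorem \ref{MP Anti} then yields $\mathrm{F}_s w(x_\infty)>0$, contradicting $\mathrm{F}_s w(x_\infty)\leq c(x_\infty)w(x_\infty)\leq 0$. Hence $w\geq 0$ in $\Sigma$. For the strong version, if \eqref{MP-condition} holds and $w(\bar x)=0$ at some $\bar x\in\Sigma$, Lemma \ref{SMP-anti} gives $w=0$ a.e.\ on $\mathbb{R}^{n}$; continuity of $w$ on $\Sigma\cup\Sigma^{c}$ (from $C^{1,1}_{loc}(\Sigma)$ together with $w(\tilde x)=-w(x)$) and the antisymmetric vanishing $w\equiv 0$ on $T$ upgrade this to $w\equiv 0$ on all of $\mathbb{R}^{n}$. \textbf{The chief technical obstacle} is the quantitative control of the error $\epsilon_k R(x_k)$ and the rigorous execution of the compactness / tangential-shift step: one must secure $C^{1,1}$-bounds on the shifted $w$'s uniform in $k$ to extract a useful limit. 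These bounds are not granted by hypothesis but can be derived from interior regularity theory for $\mathrm{F}_s$ applied to \eqref{MP-1}, or through an a~priori bootstrap exploiting the lower boundedness of $w$.
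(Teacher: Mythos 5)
Your contradiction setup is natural, but the argument has a genuine gap at its central quantitative/compactness step, and it is exactly the difficulty the paper's proof is built to avoid. The bound of the principal-value term by $-\epsilon_k R(x_k)$ with a finite $R(x_k)$ is not available: the kernel difference $K_A(x_k-y)-K_A(x_k-\tilde y)$ is not integrable near $y=x_k$, and the principal value is finite only through cancellation with $w(y)-w(x_k)$ there; making that cancellation quantitative requires a $C^{1,1}$ bound for $w$ near $x_k$ that is uniform in $k$. Such uniform bounds are not granted by the hypotheses and cannot be produced by ``interior regularity theory applied to \eqref{MP-1}'': \eqref{MP-1} is a one-sided inequality, valid only at points where $w<0$, with $c(x)$ merely nonnegative (possibly unbounded), so there is no equation to bootstrap from, and $w\in C^{1,1}_{loc}(\Sigma)$ carries no uniform modulus as the points $x_k$ may escape to infinity. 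For the same reason the Arzel\`a--Ascoli extraction of a limiting anti-symmetric function ``satisfying the same hypotheses'' is unjustified (one would in addition have to pass the nonlocal inequality and the $\mathcal{L}_{s}$-tails to the limit). Finally, the exclusion of $d_k\to+\infty$ via Theorem \ref{P:decay} is circular: that theorem presupposes that the negative minimum is attained at an interior point (and that $0\notin\Sigma$), which is precisely what you are trying to manufacture, and translating in the normal direction to force attainment destroys the anti-symmetry with respect to the fixed plane $T$.

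The paper replaces the compactness step by a perturbation that produces an exact minimizer: it subtracts the small anti-symmetric bump $\varepsilon_k[\tilde{\psi_k}-\psi_k]$, supported in balls of radius $d_k=\frac{1}{2}\,dist(x^{k},T)$ and with $\varepsilon_k=-(1-\alpha_k)m$, so that $w_k:=w-\varepsilon_k[\tilde{\psi_k}-\psi_k]$ attains an exact interior minimum at some $\bar{x}^k\in B_{d_k}(x^k)$ with $w(\bar{x}^k)<0$. At an exact minimum the troublesome principal-value term is pointwise nonnegative and can simply be discarded, yielding the lower bound $\mathrm{F}_{s}w_k(\bar{x}^k)\geq -C_{n,s,\theta}w_k(\bar{x}^k)/d_k^{2s}$, while subadditivity of $\mathrm{F}_{s}$, the inequality \eqref{MP-1} at $\bar{x}^k$, $c\geq0$, and the scaling $|\mathrm{F}_{s}[\tilde{\psi_k}-\psi_k]|\leq 2C_0 d_k^{-2s}$ give the upper bound $2C_{0}\varepsilon_k/d_k^{2s}$. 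The factor $d_k^{-2s}$ cancels, so $C_{n,s,\theta}\leq 2C_{0}(1-\alpha_k)\to0$ gives the contradiction with no control on $d_k$ and no limit function at all. To repair your outline you should replace the tangential-shift/compactness step by this exact-minimum perturbation (or supply genuinely uniform $C^{1,1}$ estimates, which the stated hypotheses do not provide).
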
		
\begin{proof}
	Suppose that \eqref{MP-2} is false, since $w$ is bounded from below, we have $-\infty<m:=\inf\limits_{\Sigma} w(x)<0$. Hence, there exists sequences $x^k\in\Sigma$ and $0<\alpha_k<1$ with $\alpha_k\rightarrow 1$ as $k\rightarrow \infty$ such that
	\begin{equation}\label{MP1-4}
		w(x^k)\leq \alpha_k m.
	\end{equation}
	Without loss of generalities, we may assume that
$$\mathbf{e}=\mathbf{e_1}=(1,0,\cdots,0), \quad T=\{x\in\mathbb{R}^n\mid x_1=0\}, \quad \Sigma=\{x\in\mathbb{R}^n\mid x_1<0\}.$$
	Then $\tilde{x}=(-x_1,x_2,\cdots,x_n)$. We denote $d_k:=\frac{1}{2}dist(x^k, T)$.
	Let
	\begin{equation*}
		\psi(x)=\begin{cases}e^{\frac{|x|^{2}}{|x|^2-1}}, \quad\,|x|<1\\ 0, \qquad\,\, \quad|x|\geq 1.\end{cases}
	\end{equation*}
	It is well known that $\psi\in C_0^\infty(\mathbb{R}^{n})$, thus $|\mathbf{F_{s}}\psi(x)|\leq C_{0}$ for all $x \in \mathbb{R}^n$. Moreover, $\mathbf{F_{s}}\psi(x)\sim|x|^{-n-2s}$ as $|x|\rightarrow +\infty$.
	
	Set \begin{equation}\label{MP-def}
	\psi_k(x):=\psi\left(\frac{x-\widetilde{(x^k)}}{d_k}\right) \quad \text{and} \quad \tilde{\psi_k}(x)=\psi_k(\tilde{x})=\psi\left(\frac{x-x^k}{d_k}\right).
	\end{equation}
	Then $\tilde{\psi_k}-\psi_k$ is anti-symmetric with respect to $T$. Now pick $\varepsilon_k=-(1-\alpha_k)m$, then we have
	$$w(x^k)-\varepsilon_k[\tilde{\psi_k}-\psi_k](x^k)\leq m.$$
	We denote $$w_k(x):=w(x)-\varepsilon_k[\tilde{\psi_k}-\psi_k](x).$$ Then $w_k$ is also anti-symmetric with respect to $T$.
	
	Since for any $x\in\Sigma\setminus B_{d_k}(x^k)$, $w(x)\geq m$ and $\tilde{\psi_k}(x)=\psi_k(x)=0$, we have
	$$w_k(x^k)\leq m\leq w_k(x), \quad \forall \,\,x\in\Sigma\setminus B_{d_k}(x^k).$$
	Hence the infimum of $w_k(x)$ in $\Sigma$ is achieved in $B_{d_k}(x^k)$. Consequently, there exists a point $\overline{x}^k\in B_{d_k}(x^k)$ such that
	\begin{equation}\label{MP-3}
		w_k(\overline{x}^k)=\inf\limits_{x\in\Sigma} w_k(x)\leq m<0.
	\end{equation}
	By the choice of $\varepsilon_k$, it is easy to verify that $w(\bar{x}^k)\leq \alpha_k m<0$.
	
	Next, we will evaluate the upper bound and the lower bound of $\mathbf{F_{s}}w_k(\bar{x}^k)$.
	
	We first obtain the upper bound by direct calculations:
	\begin{align}\label{MP-4}
		&\quad \mathbf{F_{s}} w_{k}(\bar{x}^k)\nonumber\\
		&=\mathbf{F_{s}}\{w-\varepsilon_k[\tilde{\psi_k}-\psi_k]\}(\bar{x}^k)\nonumber\\
		&\leq \mathbf{F_{s}} w(\bar{x}^k)-\varepsilon_k \mathbf{F_{s}}[\tilde{\psi_k}-\psi_k](\bar{x}^k)\\
		&\leq c(\bar{x}^k)w(\bar{x}^k)+\frac{2C_{0}\varepsilon_k}{d_k^{2s}}\nonumber\\
		&\leq \frac{2C_{0}\varepsilon_k}{d_k^{2s}}.\nonumber
	\end{align}
	
On the other hand, let
\[D_k:=\left\{x=(x_1,x')\in\mathbb{R}^{n} \mid -d_k<x_1<0, |x'-(\bar{x}^k)'|<4d_{k}\right\},\]
then through similar calculations as in \eqref{2NRP-2}, we get the following lower bound:
	\begin{align}\label{MP-5}
		&\quad \mathbf{F_{s}}w_k(\bar{x}^k)\nonumber\\
		&=\inf P.V.\int_{\mathbb{R}^{n}}\frac{w_k(y)-w_k(\bar{x}^k)}{|A^{-1}(\bar{x}^k-y)|^{n+2s}} \mathrm{d}y\nonumber\\
		&=\inf\left[ P.V.\int_{\Sigma}\frac{w_k(y)-w_k(\bar{x}^k)}{|A^{-1}(\bar{x}^k-y)|^{n+2s}} \mathrm{d}y-\int_{\Sigma}\frac{w_k(\bar{x}^k)+w_k(y)}{|A^{-1}(\bar{x}^k-\tilde{y})|^{n+2s}} \mathrm{d}y\right]\nonumber\\
		&=\inf \left[ P.V.\int_{\Sigma}\left(\frac{1}{|A^{-1}(\bar{x}^k-y)|^{n+2s}}-\frac{1}{|A^{-1}(\bar{x}^k-\tilde{y})|^{n+2s}}\right)\left(w_k(y)-w_k(\bar{x}^k)\right) \mathrm{d}y\right.\\
		&\qquad\left.-2w_k(\bar{x}^k)\int_{\Sigma}\frac{1}{|A^{-1}(\bar{x}^k-\tilde{y})|^{n+2s}} \mathrm{d}y\right]\nonumber\\
		&\geq -2w_k(\bar{x}^k)\inf\int_{\Sigma}\frac{1}{|A^{-1}(\bar{x}^k-\tilde{y})|^{n+2s}} \mathrm{d}y\nonumber\\
		&\geq -C_{n,s,\theta}w_k(\bar{x}^k)\int_{\Sigma}\frac{1}{|\bar{x}^k-\tilde{y}|^{n+2s}} \mathrm{d}y\nonumber\\
        &\geq -C_{n,s,\theta}w_k(\bar{x}^k)\int_{D_{k}}\frac{1}{|\bar{x}^k-\tilde{y}|^{n+2s}} \mathrm{d}y\nonumber\\
     	&\geq -\frac{C_{n,s,\theta}w_k(\bar{x}^k)}{d_k^{2s}}.\nonumber
	\end{align}
Combining \eqref{MP-4} and \eqref{MP-5}, we derive
	$$-\frac{C_{n,s,\theta}w_k(\bar{x}^k)}{d_k^{2s}}\leq \frac{2C_{0}\varepsilon_k}{d_k^{2s}}.$$
	Noticing that $w_k(\bar{x}^k)\leq m<0$ and $\varepsilon_k=-(1-\alpha_k)m$, we have
	$$C_{n,s,\theta}\leq 2C_{0}(1-\alpha_k),$$
which will lead to a contradiction if we let $k\rightarrow+\infty$. Thus we have proved that $w(x)\geq 0$ in $\Sigma$.

If there is some point $\bar{x}\in\Sigma$ such that $w\left(\bar{x}\right)=0$, then from \eqref{MP-condition} and Lemma \ref{SMP-anti}, we derive immediately $w=0$ almost everywhere in $\mathbb{R}^{n}$. This concludes our proof of Theorem \ref{MP_anti_ubdd}.
\end{proof}		

From the proof of Theorem \ref{MP_anti_ubdd}, we can deduce the following narrow region principle in unbounded open sets, which improves the \emph{Narrow region principle} (Theorem \ref{2NRP}).
\begin{thm}[Narrow region principle in unbounded open sets]\label{NRP-Anti-ubdd}
	Let $\Omega\subseteq\Sigma$ be an open set (possibly unbounded and disconnected) which can be contained in the region between $T$ and $T_{\Omega}$, where $T_{\Omega}$ is a hyper-plane that is parallel to $T$. Let $d(\Omega):=dist(T,T_{\Omega})$. Suppose that $w\in \mathcal{L}_{s}(\mathbb{R}^{n})\cap C_{loc}^{1,1}(\Omega)$ is bounded from below and satisfies
	\begin{equation}\label{NRP-anti-ubdd}
	\begin{cases}
	\mathbf{F_{s}} w(x)-c(x)w(x)\leq 0\qquad \text {at points} \,\, x\in\Omega \,\, \text{where} \,\, w(x)<0 \\  w(x) \geq 0\qquad  \text {in } \,\,\Sigma \backslash \Omega \\
    w\left(\tilde{x}\right)=-w(x)\qquad  \text {in}\,\, \Sigma,
   \end{cases}
	\end{equation}
where $c(x)$ is uniformly bounded from below (w.r.t. $d(\Omega)$) in $\{x\in\Omega \mid w(x)<0\}$. We assume $\Omega$ is narrow in the sense that
	\begin{equation}\label{2NRP-3-ubdd}
	d(\Omega)^{2s}\left(-\inf\limits_{\left\{x\in\Omega\,|\,w(x)<0\right\}}c(x)\right)<2^{2s-1}C_{n,s,\theta},
	\end{equation}
	where $C_{n,s,\theta}$ is the same constant as in \eqref{MP-5}. Then, $w(x)\geq 0$ in $\Omega$. Furthermore, assume that
	\begin{equation}\label{2NRP-5-ubdd}
	\mathbf{F_{s}}{w}(x)\leq0 \quad \text{at points} \,\, x\in\Omega \,\, \text{where} \,\, w(x)=0,
	\end{equation}
then either $w>0$ in $\Omega$ or $w=0$ almost everywhere in $\mathbb{R}^{n}$.
\end{thm}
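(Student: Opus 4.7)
The plan is to argue by contradiction, adapting the proof of Theorem~\ref{MP_anti_ubdd} while exploiting the narrowness of $\Omega$ to absorb a possibly negative $c(x)$. Suppose $w$ is not everywhere nonnegative in $\Omega$; since $w$ is bounded from below on $\Sigma$, $m:=\inf_\Sigma w\in(-\infty,0)$. Choose $x^k\in\Omega$ with $w(x^k)\leq\alpha_k m$ for some $\alpha_k\nearrow 1$. Normalize $T=\{x_1=0\}$ and $\Sigma=\{x_1<0\}$; narrowness forces $\mathrm{dist}(x^k,T)\leq d(\Omega)$, so $d_k:=\tfrac{1}{2}\mathrm{dist}(x^k,T)\leq d(\Omega)/2$.

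Next I would introduce the anti-symmetric bump $\tilde\psi_k-\psi_k$ defined as in \eqref{MP-def} with radius $d_k$, set $\varepsilon_k:=-(1-\alpha_k)m>0$, and form $w_k:=w-\varepsilon_k(\tilde\psi_k-\psi_k)$. Exactly as in the proof of Theorem~\ref{MP_anti_ubdd}, $w_k\equiv w\geq m$ on $\Sigma\setminus B_{d_k}(x^k)$ while $w_k(x^k)\leq m$, so $\inf_{\Sigma}w_k$ is attained at some $\bar x^k\in B_{d_k}(x^k)$; moreover $w(\bar x^k)\leq \alpha_km<0$, so (using $w\geq 0$ on $\Sigma\setminus\Omega$) $\bar x^k\in\Omega$ and the differential inequality \eqref{NRP-anti-ubdd} is available at $\bar x^k$.

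The crucial step is to compare matching upper and lower bounds for $\mathrm{F}_s w_k(\bar x^k)$ at the common $d_k$-scale. The upper bound
\[
\mathrm{F}_s w_k(\bar x^k)\;\leq\;c(\bar x^k)w(\bar x^k)+\frac{2C_0\varepsilon_k}{d_k^{2s}}
\]
follows from Lemma~\ref{lem0}(b) together with $|\mathrm{F}_s\psi_k|,|\mathrm{F}_s\tilde\psi_k|\leq C_0/d_k^{2s}$, exactly as in \eqref{MP-4}. The lower bound
\[
\mathrm{F}_s w_k(\bar x^k)\;\geq\;-\frac{C_{n,s,\theta}}{d_k^{2s}}\,w_k(\bar x^k)
\]
comes from the region-integral calculation in \eqref{MP-5}, valid because $B_{d_k}(\widetilde{x^k})\subset\Sigma^c$. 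Since $\tilde\psi_k(\bar x^k)\geq 0$ and $\psi_k(\bar x^k)=0$, one has $w_k(\bar x^k)\leq w(\bar x^k)<0$, and writing $c_-:=-\inf_{\{w<0\}\cap\Omega}c\geq 0$ gives $c(\bar x^k)w(\bar x^k)\leq-c_-w_k(\bar x^k)$.

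Combining the two estimates and multiplying through by $d_k^{2s}$ yields
\[
\bigl(C_{n,s,\theta}-c_-\,d_k^{2s}\bigr)\bigl(-w_k(\bar x^k)\bigr)\;\leq\;2C_0\varepsilon_k.
\]
Since $d_k\leq d(\Omega)/2$, hypothesis \eqref{2NRP-3-ubdd} gives $c_-d_k^{2s}\leq c_-d(\Omega)^{2s}\cdot 2^{-2s}<C_{n,s,\theta}/2$, so the left factor exceeds $C_{n,s,\theta}/2$, while $-w_k(\bar x^k)\geq -m>0$ and $\varepsilon_k=(1-\alpha_k)(-m)\to 0$. Letting $k\to\infty$ produces a contradiction, establishing $w\geq 0$ on $\Sigma$. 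The dichotomy ``$w>0$ in $\Omega$ or $w\equiv 0$ a.e.\ in $\mathbb{R}^n$'' then follows by applying Lemma~\ref{SMP-anti} at any interior zero, using hypothesis \eqref{2NRP-5-ubdd}. The main bookkeeping obstacle is the appearance of two scales: the perturbation and its $\mathrm{F}_s$-image live at scale $d_k$, whereas the hypothesis is phrased in terms of $d(\Omega)$. The constant $2^{2s-1}$ in \eqref{2NRP-3-ubdd} is exactly tuned so that the worst case $d_k^{2s}=(d(\Omega)/2)^{2s}=d(\Omega)^{2s}\cdot 2^{-2s}$ still leaves a positive gap $C_{n,s,\theta}-c_-d_k^{2s}\geq C_{n,s,\theta}/2$ uniformly in $k$.
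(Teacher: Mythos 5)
Your argument is correct and is essentially the paper's own proof: the paper likewise runs the perturbation scheme of Theorem \ref{MP_anti_ubdd}, combines the upper and lower bounds \eqref{MP-4} and \eqref{MP-5} at the scale $d_k$, and uses $d_k\leq d(\Omega)/2$ together with \eqref{2NRP-3-ubdd} (whence the constant $2^{2s-1}=4^{s}/2$) to reach the same contradiction, finishing with Lemma \ref{SMP-anti} for the dichotomy. The only cosmetic point is your assertion that $c_-\geq 0$: if instead $\inf_{\{x\in\Omega\,|\,w(x)<0\}}c(x)>0$, the inequality $c(\bar x^k)w(\bar x^k)\leq -c_-\,w_k(\bar x^k)$ need not hold as written, but in that case $c\geq0$ on the negative set and Theorem \ref{MP_anti_ubdd} (or the trivial bound $c(\bar x^k)w(\bar x^k)\leq 0$) already yields the conclusion, so nothing is lost.
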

\begin{proof}
Theorem \ref{NRP-Anti-ubdd} can be proved by using quite similar arguments as in the proof of Theorem \ref{MP_anti_ubdd}, we only mention some key ingredients.

Indeed, combining \eqref{MP-4} and \eqref{MP-5}, we derive
    \begin{align}\label{MP-6}
    -\frac{C_{n,s,\theta}w_k(\bar{x}^k)}{d_k^{2s}}\leq c(\bar{x}^k)w_k(\bar{x}^k)+\frac{2C_{0}\varepsilon_k}{d_k^{2s}}\leq \left(\inf\limits_{\left\{x\in\Omega\,|\,w(x)<0\right\}}c(x)\right)w_k(\bar{x}^k)+\frac{2C_{0}\varepsilon_k}{d_k^{2s}}.
    \end{align}
    For $k$ sufficiently large such that $\alpha_k>1-\frac{C_{n,s,\theta}}{4C_{0}}$, recall that $\varepsilon_{k}=-(1-\alpha_{k})m$ and $w_k(\bar{x}^k)\leq m$, we have $\frac{2C_{0}\varepsilon_k}{d_k^{2s}}\leq-\frac{C_{n,s,\theta}w_k(\bar{x}^k)}{2d_k^{2s}}$. Then, we infer from \eqref{MP-6} and $d_k\leq \frac{d(\Omega)}{2}$ that
    $$\inf\limits_{\left\{x\in\Omega\,|\,w(x)<0\right\}}c(x)\leq -\frac{4^sC_{n,s,\theta}}{2d(\Omega)^{2s}},$$
    which contradicts \eqref{2NRP-3-ubdd}. This finishes the proof of Theorem \ref{NRP-Anti-ubdd}.
\end{proof}
\begin{rem}\label{rem8}
	In Theorem \ref{NRP-Anti-ubdd}, we allow the open set $\Omega$ to be unbounded without the additional assumption $\liminf\limits_{|x|\rightarrow+\infty}w(x)\geq0$ in Theorem \ref{2NRP}.
\end{rem}

From the proof of Theorem \ref{MP_anti_ubdd}, we can also deduce the following maximum principle in unbounded domains, which improves the \emph{Decay at infinity (I)} (Theorem \ref{P:decay}).
\begin{thm}[Decay at infinity (II)]\label{dati}
Let $\Omega$ be an unbounded open set in $\Sigma$. Assume $w\in \mathcal{L}_{s}(\mathbb{R}^{n})\cap C_{loc}^{1,1}(\Omega)$ is bounded from below and satisfies
	\begin{equation}\label{dati-eq}
	\left\{\begin{array}{ll}{\mathbf{F_{s}} w(x)-c(x)w(x)\leq 0} & {\text {at points} \,\, x\in\Omega \,\, \text{where} \,\, w(x)<0} \\ {w(x) \geq 0} & {\text { in } \Sigma \backslash \Omega} \\ {w\left(\tilde{x}\right)=-w(x)} & {\text { in } \Sigma}\end{array}\right.
	\end{equation}
	with
	\begin{equation}\label{dati-con}
	\liminf\limits_{\substack{x\in\Omega,\,w(x)<0 \\ |x|\rightarrow \infty}}|x|^{2s}c(x)>-\frac{C_{n,s,\theta}}{4},
	\end{equation}
	where $C_{n,s,\theta}$ is the same constant as in \eqref{MP-5}.
	
	Then there exists a $R_{0}>0$ large enough and $\alpha_{0}\in (0,1)$ close enough to $1$ ($R_{0}$ and $\alpha_{0}$ are independent of $w$ and $\Sigma$) such that, if $\hat{x}\in\Omega$ satisfying
	$$
	w\left(\hat{x}\right)\leq\alpha_{0}\inf _{\Omega}w(x)<0,
	$$
	then $\left|\hat{x}\right|\leq R_{0}$.
\end{thm}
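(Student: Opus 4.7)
The plan is to argue by contradiction, combining the bump-function perturbation from the proof of Theorem \ref{MP_anti_ubdd} with the far-field geometry from the proof of Theorem \ref{P:decay}, and to invoke the decay hypothesis \eqref{dati-con} on $c$ to absorb the cross term $c(\bar{x}^k)w(\bar{x}^k)$ that is no longer automatically nonpositive.

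After a translation, assume $T = \{x_1 = 0\}$ and $\Sigma = \{x_1 < 0\}$, and set $m := \inf_\Omega w < 0$. If no such $R_0$ and $\alpha_0$ existed, we could extract sequences $\alpha_k \uparrow 1$ and $\hat{x}^k \in \Omega$ with $|\hat{x}^k| \to \infty$ and $w(\hat{x}^k) \leq \alpha_k m$. With $d_k := \tfrac{1}{2}|(\hat{x}^k)_1|$, $\varepsilon_k := -(1-\alpha_k)m > 0$, and the scaled bumps $\psi_k,\tilde{\psi_k}$ as in \eqref{MP-def}, set $w_k := w - \varepsilon_k(\tilde{\psi_k}-\psi_k)$. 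Exactly as in the proof of Theorem \ref{MP_anti_ubdd}, the infimum of $w_k$ on $\Sigma$ is attained at some $\bar{x}^k \in B_{d_k}(\hat{x}^k)$ with $w_k(\bar{x}^k) \leq m < 0$, and the hypothesis $w\geq 0$ on $\Sigma\setminus\Omega$ forces $\bar{x}^k \in \Omega$. Since $d_k \leq |\hat{x}^k|/2$, we get $|\bar{x}^k| \geq |\hat{x}^k|/2 \to \infty$, while $(\bar{x}^k)_1 \in [-3d_k,-d_k]$ gives the crucial geometric inequality $|\bar{x}^k| \geq d_k$.

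Reproducing the computations \eqref{MP-4}--\eqref{MP-5} verbatim at $\bar{x}^k$ yields the sandwich
\[
-\frac{C_{n,s,\theta}\,w_k(\bar{x}^k)}{d_k^{2s}} \;\leq\; \mathrm{F}_s w_k(\bar{x}^k) \;\leq\; c(\bar{x}^k)\,w(\bar{x}^k) + \frac{2C_0\,\varepsilon_k}{d_k^{2s}}.
\]
Because $\bar{x}^k \in \Omega$, $w(\bar{x}^k)<0$ and $|\bar{x}^k|\to\infty$, hypothesis \eqref{dati-con} gives $c(\bar{x}^k) \geq -C_{n,s,\theta}/(4|\bar{x}^k|^{2s})$ for $k$ sufficiently large. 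Using $|w(\bar{x}^k)| \leq |w_k(\bar{x}^k)|$ (since $w_k(\bar{x}^k)\leq w(\bar{x}^k)<0$) and then $|\bar{x}^k| \geq d_k$,
\[
c(\bar{x}^k)\,w(\bar{x}^k) \;\leq\; \frac{C_{n,s,\theta}\,|w(\bar{x}^k)|}{4|\bar{x}^k|^{2s}} \;\leq\; \frac{C_{n,s,\theta}\,|w_k(\bar{x}^k)|}{4\,d_k^{2s}}.
\]
Substituting and dividing through by $|w_k(\bar{x}^k)|/d_k^{2s}$, then recalling $|w_k(\bar{x}^k)| \geq |m|$ and $\varepsilon_k = (1-\alpha_k)|m|$, one arrives at $\tfrac{3}{4}\,C_{n,s,\theta} \leq 2C_0(1-\alpha_k)$, which is violated for $k$ large. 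This yields the contradiction and simultaneously pins down the thresholds: any $\alpha_0 \in \bigl(1 - \tfrac{3 C_{n,s,\theta}}{8 C_0},\,1\bigr)$ works, with $R_0$ chosen large enough that $|x|^{2s} c(x) > -C_{n,s,\theta}/4$ whenever $x \in \Omega$, $w(x)<0$ and $|x|\geq R_0$.

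The main subtlety is the interplay of two length scales: the distance $d_k$ to the reflection hyperplane (governing both the $\mathrm{F}_s$-norm of the bump and the near-field kernel estimate) and the distance $|\bar{x}^k|$ to the origin (governing the smallness of $c$). The argument closes only because of the inequality $d_k \leq |\bar{x}^k|$, which lets the far-field bound $C_{n,s,\theta}/(4|\bar{x}^k|^{2s})$ be absorbed into the leading near-field lower bound $C_{n,s,\theta}/d_k^{2s}$ while still leaving a definite gap. Without this scale comparison (or without the perturbation device that produces a genuine minimizer at which \eqref{dati-eq} can be tested), the contradiction with $\alpha_k \to 1$ would not materialize.
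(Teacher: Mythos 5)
Your proposal is correct and takes essentially the same approach as the paper's proof: the same contradiction sequences $\alpha_k\to1$, $|\hat{x}^k|\to\infty$, the same perturbation $w_k=w-\varepsilon_k[\tilde{\psi_k}-\psi_k]$ with minimizer $\bar{x}^k\in B_{d_k}(\hat{x}^k)$, and the same sandwich \eqref{MP-4}--\eqref{MP-5}; the only (cosmetic) difference is the order of absorption — you first use \eqref{dati-con} to absorb $c(\bar{x}^k)w(\bar{x}^k)$ into the kernel term and then contradict $\alpha_k\to1$, whereas the paper first absorbs the $\varepsilon_k$-term and then contradicts \eqref{dati-con} via $c(\bar{x}^k)\leq-\frac{C_{n,s,\theta}}{4|\bar{x}^k|^{2s}}$. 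Only your parenthetical explicit choice of $R_0$ should account for the fact that the argument controls $|\bar{x}^k|\geq|\hat{x}^k|/2$ rather than $|\hat{x}^k|$ itself (e.g. take $R_0$ twice the radius beyond which \eqref{dati-con} is effective); this does not affect the validity of the contradiction argument.
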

\begin{proof}
	Theorem \ref{dati} can be proved via similar contradiction arguments as Theorem \ref{MP_anti_ubdd}.
	
	Indeed, suppose on the contrary that there exists sequences $\{x^{k}\}\in\Omega$ and $\{\alpha_{k}\}\in(0,1)$ such that
	\begin{equation}\label{dati-1}
	|x^{k}|\rightarrow+\infty, \quad \alpha_{k}\rightarrow 1, \quad \text{and} \quad w(x^{k})\leq\alpha_{k}\inf _{\Omega}w(x)<0.
	\end{equation}
	Then, similar calculations as in the proof of Theorem \ref{MP_anti_ubdd} (see \eqref{MP-4} and \eqref{MP-5}) give that
	\begin{align}\label{MP-7}
	-\frac{C_{n,s,\theta}w_k(\bar{x}^k)}{d_k^{2s}}\leq c(\bar{x}^k)w_k(\bar{x}^k)+\frac{2C_{0}\varepsilon_k}{d_k^{2s}}.
	\end{align}
Now we take $k$ sufficiently large such that $\alpha_k>1-\frac{C_{n,s,\theta}}{4C_{0}}$. Recall that $d_{k}:=\frac{1}{2}dist(x^{k},T)$ and $\bar{x}^{k}\in B_{d_{k}}(x^{k})$, we infer from \eqref{MP-7} that, for $k$ large enough,
	$$c(\bar{x}^k)\leq -\frac{C_{n,s,\theta}}{2d_k^{2s}}\leq -\frac{C_{n,s,\theta}}{4|\bar{x}^k|^{2s}},$$
	which contradicts \eqref{dati-con} if we let $k\rightarrow+\infty$.
\end{proof}
\begin{rem}\label{rem11}
We say \emph{Decay at infinity (II)} Theorem \ref{dati} improved \emph{Decay at infinity (I)} Theorem \ref{P:decay} in the sense that, not only the positions of minimal points but also the positions of ``almost" negative minimal points were controlled by a radius $R_{0}$ in Theorem \ref{dati}. Theorem \ref{dati} also tell us that, if $\inf _{\Omega}w(x)<0$, then $\Omega\cap B_{R_{0}}(0)\neq\emptyset$ and the negative minimum can be attained in $\Omega\cap B_{R_{0}}(0)$.
\end{rem}

As an immediate application of Theorem \ref{MP_anti_ubdd}, we can obtain the following Liouville type Theorem in $\mathbb{R}^{n}$ in the direction $\mathbf{e}$, which indicates that the solution $u$ only depends on $n-1$ variables. For Liouville theorem on $s$-harmonic functions in $\mathbb{R}^{n}$, please refer to \cite{F} and the references therein.
\begin{thm}(Liouville Theorem in the direction $\mathbf{e}$)\label{Liouville}
Assume that $u\in\mathcal{L}_{s}(\mathbb{R}^{n})\cap C_{\text {loc}}^{1,1}(\mathbb{R}^{n})$ is bounded and satisfies
	\begin{equation}\label{MP-8}
	\mathbf{F_{s}}u(x)=0 \quad \text{in}\,\,\mathbb{R}^{n}.
	\end{equation}
Then
$$u(x)\equiv u(x+t\mathbf{e}), \qquad \forall \,x\in\mathbb{R}^{n},\,\, \forall \, t\in \mathbb{R}.$$
\end{thm}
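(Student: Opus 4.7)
The plan is to run the method of moving planes in every coordinate direction; since the right-hand side of the equation is zero, no nonlinearity condition is needed, and the argument reduces to a single application of the antisymmetric maximum principle on the whole half-space (Theorem \ref{MP_anti_ubdd}) with trivial weight $c\equiv 0$. Fix the $x_1$-direction. For each $\lambda\in\mathbb{R}$ set $T_\lambda:=\{x_1=\lambda\}$, $\Sigma_\lambda:=\{x_1<\lambda\}$, $\tilde x^\lambda:=(2\lambda-x_1,x')$, $u_\lambda(x):=u(\tilde x^\lambda)$, and
$$w_\lambda(x):=u_\lambda(x)-u(x).$$
By construction $w_\lambda$ is antisymmetric across $T_\lambda$, and it is bounded (hence lies in $\mathcal{L}_s(\mathbb{R}^n)\cap C^{1,1}_{\text{loc}}(\mathbb{R}^n)$) because $u$ is.

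The key step is to verify $\mathrm{F}_s w_\lambda\leq 0$ pointwise on $\mathbb{R}^n$. By the reflection invariance of $\mathrm{F}_s$ (Lemma \ref{lem0}(a)), $\mathrm{F}_s u_\lambda(x)=(\mathrm{F}_s u)(\tilde x^\lambda)=0$. Combining this with the superadditivity in Lemma \ref{lem0}(b),
$$0 \;=\; \mathrm{F}_s u_\lambda \;=\; \mathrm{F}_s(w_\lambda+u) \;\geq\; \mathrm{F}_s w_\lambda+\mathrm{F}_s u \;=\; \mathrm{F}_s w_\lambda.$$
Thus $w_\lambda$ satisfies $\mathrm{F}_s w_\lambda-c(x)w_\lambda\leq 0$ with $c\equiv 0$, and Theorem \ref{MP_anti_ubdd} applies with $\Sigma=\Sigma_\lambda$ to give $w_\lambda\geq 0$ throughout $\Sigma_\lambda$. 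Since $\lambda$ is arbitrary, choosing $\lambda=(a+b)/2$ for any $a<b$ gives $u(a,x')\leq u(b,x')$, i.e.\ $u$ is non-decreasing in $x_1$.

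To upgrade monotonicity to $x_1$-independence, I will apply the same argument to $\check u(x):=u(-x_1,x')$, which still satisfies $\mathrm{F}_s\check u=0$ by reflection invariance of $\mathrm{F}_s$; this yields the opposite monotonicity and hence $u$ is independent of $x_1$. Repeating the reasoning in every coordinate direction (rotation invariance of $\mathrm{F}_s$ makes the choice of direction irrelevant) forces $u\equiv C$ on $\mathbb{R}^n$.

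I do not anticipate any serious obstacle: the nontrivial content is the one-line inequality $\mathrm{F}_s w_\lambda\leq 0$, which is immediate from Lemma \ref{lem0}. The decisive convenience is that Theorem \ref{MP_anti_ubdd} tolerates the trivial weight $c\equiv 0$ with no decay or narrowness hypothesis on $\Sigma_\lambda$, so the moving plane can be started from $\lambda=-\infty$ and slid all the way to $\lambda=+\infty$ without any intermediate ``shrinking the domain'' step.
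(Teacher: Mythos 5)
Your proof is correct and takes essentially the same route as the paper: the entire content is the one-line inequality $\mathrm{F}_{s}w_{\lambda}\leq \mathrm{F}_{s}u_{\lambda}-\mathrm{F}_{s}u=0$ from Lemma \ref{lem0} followed by a single application of Theorem \ref{MP_anti_ubdd} with $c\equiv 0$, exactly as in the paper. The only (equivalent) difference is the endgame: the paper applies the maximum principle on both half-spaces of an arbitrary hyperplane $T$ to conclude $u$ is symmetric about every hyperplane, whereas you vary $\lambda$ to get monotonicity in each coordinate direction and reverse it via the reflected function $\check{u}$, which amounts to the same thing.
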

\begin{proof}
Let $T$ be any hyper-plane perpendicular to $\mathbf{e}$, $\Sigma$ be the half space on one side of the plane $T$. Set $\tilde{u}(x)=u(\tilde{x})$ and $w(x)=\tilde{u}(x)-u(x)$ for all $x\in \Sigma$, where $\tilde{x}$ is the reflection of $x$ with respect to $T$. Then, $w\in\mathcal{L}_{s}(\mathbb{R}^{n})\cap C_{\text {loc}}^{1,1}(\mathbb{R}^{n})$ is bounded, and at any points $x\in\Sigma$ where $w(x)<0$, one has $\mathbf{F_{s}} w(x)\leq \mathbf{F_{s}}\tilde{u}(x)-\mathbf{F_{s}}u(x)=0$. Therefore, applying Theorem \ref{MP_anti_ubdd}, we arrive immediately $w\geq 0$ in $\Sigma$. Similarly, we can prove that $w\geq 0$ in $\mathbb{R}^n\setminus\Sigma$. Hence $w\equiv0$ in $\mathbb{R}^{n}$, and $u$ is symmetric with respect to $T$. Since the hyper-plane $T$ (perpendicular to $\mathbf{e}$) is arbitrary, we must have $u(x)\equiv u(x+t\mathbf{e})$ for all $x\in\mathbb{R}^{n}$ and $t\in \mathbb{R}$. This finishes the proof of Theorem \ref{Liouville}.
\end{proof}

Next, let us consider the following equation
\begin{equation}\label{MP-9}
-\mathbf{F_{s}} u(x)=f(u(x)), \quad \forall \,\, x\in \mathbb{R}^n.
\end{equation}
Assume without loss of generalities that $\mathbf{e}=\mathbf{e_n}:=(0,\cdots,0,1)$. As another application of Theorem \ref{MP_anti_ubdd}, we derive the following monotonicity result on \eqref{MP-9}.
\begin{thm}\label{Mono_1}
	Suppose $u\in\mathcal{L}_{s}(\mathbb{R}^{n})\cap C_{\text {loc}}^{1,1}(\mathbb{R}^{n})$ is a solution of \eqref{MP-9}, and
	$$|u(x)|\leq 1,\quad\forall \, x\in \mathbb{R}^{n},$$
	\begin{equation}\label{MP-10}
	\lim\limits_{x_{n}\rightarrow\pm \infty}u(x',x_n)=\pm1 \quad \text{uniformly w.r.t.} \,\, x' \in \mathbb{R}^{n-1}.
	\end{equation}
	Assume there exists a $\delta>0$ such that
	\begin{equation}\label{MP-11}
	f(t)\,\,\text{is non-increasing on}\,\, [-1,-1+\delta] \cup [1-\delta,1],
	\end{equation}
	then there exists $M>0$ such that, $u(x)$ is strictly monotone increasing w.r.t. $x_{n}$ provided that $|x_{n}|>M$.
\end{thm}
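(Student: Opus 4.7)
\medskip

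\noindent\textbf{Proof proposal for Theorem \ref{Mono_1}.}

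The plan is to apply the method of moving planes in the direction $e_{n}$, running the plane in from $-\infty$ and, symmetrically, from $+\infty$, and to use the maximum principle for anti-symmetric functions in unbounded domains (Theorem \ref{MP_anti_ubdd}) to push the plane up to some fixed height $-M$ (and down to $M$). For $\lambda\in\mathbb{R}$, set $T_{\lambda}:=\{x_{n}=\lambda\}$, $\Sigma_{\lambda}:=\{x_{n}<\lambda\}$, $x^{\lambda}:=(x',2\lambda-x_{n})$, $u_{\lambda}(x):=u(x^{\lambda})$ and $w_{\lambda}(x):=u_{\lambda}(x)-u(x)$. Then $w_{\lambda}$ is anti-symmetric with respect to $T_{\lambda}$ and bounded by $2$. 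Using the subadditivity of $\mathrm{F}_{s}$ from Lemma \ref{lem0}(b) in the form $\mathrm{F}_{s}(u_{\lambda}-u)\leq \mathrm{F}_{s}u_{\lambda}-\mathrm{F}_{s}u$, together with the invariance of $\mathrm{F}_{s}$ under reflection w.r.t.\ $T_{\lambda}$ from Lemma \ref{lem0}(a) (which gives $\mathrm{F}_{s}u_{\lambda}(x)=(\mathrm{F}_{s}u)(x^{\lambda})=-f(u(x^{\lambda}))$) and \eqref{MP-9}, I obtain the fundamental inequality
\begin{equation*}
\mathrm{F}_{s}w_{\lambda}(x)\leq f(u(x))-f(u_{\lambda}(x)) \qquad \text{in } \Sigma_{\lambda}.
\end{equation*}

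Next, I exploit the asymptotic condition \eqref{MP-10} to choose $M_{0}>0$ so large that $-1\leq u(x)<-1+\delta$ whenever $x_{n}<-M_{0}+1$. Fix any $\lambda<-M_{0}$ and restrict attention to points $x\in\Sigma_{\lambda}$ where $w_{\lambda}(x)<0$: there $-1\leq u_{\lambda}(x)<u(x)<-1+\delta$, so both values lie in the interval $[-1,-1+\delta]$ on which $f$ is non-increasing by \eqref{MP-11}. Consequently $f(u(x))-f(u_{\lambda}(x))\leq 0$, and therefore $\mathrm{F}_{s}w_{\lambda}(x)\leq 0=c(x)w_{\lambda}(x)$ with $c\equiv 0\geq 0$. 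Since $w_{\lambda}$ is anti-symmetric, bounded from below and this inequality holds at all points of $\Sigma_{\lambda}$ where $w_{\lambda}<0$ (note $\Omega=\Sigma_{\lambda}$ here, so the condition $w\geq 0$ on $\Sigma\setminus\Omega$ is vacuous), Theorem \ref{MP_anti_ubdd} yields $w_{\lambda}\geq 0$ in $\Sigma_{\lambda}$. Moreover, the same reasoning gives $\mathrm{F}_{s}w_{\lambda}(x)\leq 0$ also at points where $w_{\lambda}(x)=0$, so the ``furthermore" part of Theorem \ref{MP_anti_ubdd} applies: either $w_{\lambda}>0$ strictly in $\Sigma_{\lambda}$, or $w_{\lambda}\equiv 0$ a.e. The latter would force $u(x',-x_{n}+2\lambda)=u(x',x_{n})$, which contradicts \eqref{MP-10} by letting $x_{n}\to-\infty$ (the left side tends to $+1$, the right to $-1$). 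Hence $w_{\lambda}>0$ in $\Sigma_{\lambda}$ for every $\lambda<-M_{0}$.

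From strict positivity of $w_{\lambda}$ for all $\lambda\leq -M_{0}$, I deduce strict monotonicity: given any $a<b\leq -M_{0}$, choose $\lambda=(a+b)/2\leq -M_{0}$, so that $(x',a)\in\Sigma_{\lambda}$ and $(x',a)^{\lambda}=(x',b)$; then $w_{\lambda}(x',a)=u(x',b)-u(x',a)>0$. Thus $u(\cdot,x_{n})$ is strictly increasing in $x_{n}$ on $(-\infty,-M_{0}]$. A completely symmetric argument in the $+x_{n}$ direction (moving planes from $+\infty$ downward, using the non-increasing behaviour of $f$ on $[1-\delta,1]$ together with \eqref{MP-10} at $+\infty$) produces some $M_{1}>0$ such that $u$ is strictly increasing in $x_{n}$ on $[M_{1},+\infty)$. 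Taking $M:=\max\{M_{0},M_{1}\}$ finishes the proof.

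The main obstacle I anticipate is the bookkeeping in the first step: one must combine the sub-additivity in Lemma \ref{lem0}(b) with the reflection invariance in Lemma \ref{lem0}(a) cleanly to obtain the differential inequality for $w_{\lambda}$ without losing the correct sign, and simultaneously verify that at every \emph{negative} value of $w_{\lambda}$ the two quantities $u(x)$ and $u_{\lambda}(x)$ both fall inside the monotonicity window $[-1,-1+\delta]$ so that the nonlinearity contributes a favourable sign (i.e.\ allows one to take $c\equiv 0$ rather than having to control a possibly large $\|f'\|_\infty$). The rest of the argument is a direct application of Theorem \ref{MP_anti_ubdd} and the asymptotics.
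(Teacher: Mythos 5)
Your proposal is correct and follows essentially the same route as the paper: the inequality $\mathrm{F}_{s}w_{\lambda}\leq \mathrm{F}_{s}u_{\lambda}-\mathrm{F}_{s}u$ from Lemma \ref{lem0}, the observation that at negative values of $w_{\lambda}$ both $u$ and $u_{\lambda}$ fall in the monotonicity window of $f$ so that one can take $c\equiv 0$, an application of Theorem \ref{MP_anti_ubdd}, and the strong maximum principle combined with \eqref{MP-10} to upgrade to strict inequality. The only cosmetic difference is that the paper treats $\lambda>M$ and $\lambda<-M$ simultaneously with the half-spaces $\{x_{n}>\lambda\}$, while you run the two regimes as two symmetric moving-plane arguments, which is immaterial.
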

\begin{proof}
For arbitrary $\lambda\in\mathbb{R}$, let $T_{\lambda}:=\left\{x \in \mathbb{R}^{n} \,|\, x_{n}=\lambda\right\}$, $\Sigma_{\lambda}:=\left\{x \in \mathbb{R}^{n} \,|\, x_{n}>\lambda\right\}$ be the region above the plane, and $x^{\lambda}:=\left(x_{1}, x_{2}, \ldots, 2 \lambda-x_{n}\right)$ be the reflection of point $x$ about the plane $T_{\lambda}$.
	
We only need to show that $w_{\lambda}(x):=u_\lambda(x)-u(x)\leq0$ in $\Sigma_{\lambda}$ for any $\lambda$ with $|\lambda|$ sufficiently large, where $u_\lambda(x):=u(x^{\lambda})$. By the assumption \eqref{MP-10}, there exists $M>0$ such that $u(x)\in[-1,-1+\delta]\cup[1-\delta,1]$ for any $x$ with $|x_n|>M$. Consequently, for any $|\lambda|>M$, at any point $x\in \Sigma_{\lambda}$ where $w_\lambda(x)=u(x^\lambda)-u(x)>0$, we infer from assumption \eqref{MP-11} that $\mathbf{F_{s}}w_\lambda(x)\leq\mathbf{F_{s}}u_\lambda(x)-\mathbf{F_{s}}u(x)=-f(u(x^\lambda))+f(u(x))\geq 0$. Therefore, we deduce from Theorem \ref{MP_anti_ubdd} that $w_{\lambda}(x)\leq 0$ in $\Sigma_{\lambda}$ for all $\lambda$ with $|\lambda|>M$.
	
Now, suppose that there exists a $\widetilde{\lambda}\in(-\infty,-M)\cup(M,+\infty)$ and a point $\hat{x}\in\Sigma_{\widetilde{\lambda}}$ such that $w_{\widetilde{\lambda}}(\hat{x})=0$. Then, it follows that
	\begin{equation}\label{MP-10'}
	\mathbf{F_{s}}w_{\widetilde{\lambda}}(\hat{x})\leq -f(u((\hat{x})^{\widetilde{\lambda}}))+f(u(\hat{x}))=0,
	\end{equation}
	and hence we can derive from Lemma \ref{SMP-anti} immediately that $w_{\widetilde{\lambda}}(x)=0$ almost everywhere in $\mathbb{R}^{n}$, which contradicts assumption \eqref{MP-10}. Thus $w_{\lambda}(x):=u(x^{\lambda})-u(x)<0$ in $\Sigma_{\lambda}$ for all $\lambda$ with $|\lambda|>M$. This finishes our proof of Theorem \ref{Mono_1}.
\end{proof}

\begin{rem}\label{rem1}
One should note that the De Giorgi type nonlinearity $f(u)=u-u^{3}$ satisfies condition \eqref{MP-11}.
\end{rem}

\section{The method of moving planes for $\mathbf{F_{s}}$ and its applications}

In this Section, by using various maximum principles for anti-symmetric functions established in Section 2, we will apply the method of moving planes to investigate symmetry and monotonicity of solutions to various problems involving the uniformly elliptic nonlocal Bellman operators $\mathbf{F_{s}}$ ($s\in(0,1)$) associated with vector $\mathbf{e}$.

We investigate the monotonicity and symmetry properties of nonnegative solutions to the following nonlinear Dirichlet problem:
\begin{equation}\begin{cases}\label{fractional-MA}
-\mathbf{F_{s}}u(x)=f(x,u(x),\nabla u(x)) \qquad \text{in}\ \ \Omega,\\ \quad u>0 \qquad \text{in}\ \ \Omega,\\
\quad u\equiv0 \qquad \text{in}\ \ \mathbb{R}^n\setminus \Omega,
\end{cases}\end{equation}
where $\Omega$ is a (bounded or unbounded) domain, coercive epigraph or the whole space $\mathbb{R}^{n}$.

\subsection{Bounded domain}

Assume without loss of generalities that $\mathbf{e}=\mathbf{e_1}$ and let $\Omega$ be a bounded domain in $\mathbb{R}^n$ which is convex in $x_1$-direction. We say that a domain $\Omega$ is convex in $x_1$-direction, if and only if, $(\bar{x}_1, x')$, $(x_1, x') \in\Omega$ imply that $(t\bar{x}_1+(1-t)x_1, x')\in \Omega$ for any $0<t<1$.

Let $\mathcal{F}$ be the collection of functions $f(x, u, \mathbf{p}): \, \Omega\times\mathbb{R}\times\mathbb{R}^n\rightarrow\mathbb{R}$ which is locally Lipschitz in $u$, uniformly in $\mathbf{p}$ and locally uniformly in $x$: for any $M>0$ and any compact subset $K\subset\overline{\Omega}$, there exists $C_{K,M}>0$ such that, $\forall \, u_1,u_2\in[-M, M]$, $\forall \, x\in K$ and $\forall \, \mathbf{p}\in\mathbb{R}^{n}$,
$$|f(x, u_1, \mathbf{p})-f(x, u_2, \mathbf{p})|\leq C_{K,M}|u_1-u_2|.$$

By applying the method of moving planes for $\mathbf{F_{s}}$, we will prove the following monotonicity and symmetry results for \eqref{fractional-MA} in bounded domain $\Omega$. For related results for $-\Delta$, $(-\Delta)^{s}$, $(-\Delta+m^{2})^{s}$, $(-\Delta)^{s}_{p}$ or second order Monge-Amp\`{e}re operator, please refer to \cite{BN1,BN2,BN3,Cheng1,CHL,CL2,CLM,DLW,DQW,GNN1,Li1,Li2}.

\begin{thm}\label{boundedthm}
Let $\Omega\subset\mathbb{R}^{n}$ be a bounded domain which is convex in $x_1$-direction and symmetric w.r.t. $\{x\in\mathbb{R}^{n} \mid x_{1}=0\}$. Suppose that $u\in C^{1,1}_{loc}(\Omega)\cap C(\overline{\Omega})$ solves \eqref{fractional-MA} for $s\in(0,1)$ and $f(x, u, \mathbf{p})\in\mathcal{F}$. If $f(x,u,\mathbf{p})$ satisfies
\begin{equation}\begin{cases}\label{assump-f}
f(x_1, x', u, p_1, p_2,\cdots, p_n)\leq f(\bar{x}_1, x', u, -p_1, p_2,\cdots, p_n),\\
\ \ \ \ \ \forall \,\,-x_1, p_1\geq0, \,\, x_1\leq \bar{x}_1\leq -x_1.
\end{cases}\end{equation}
Then $u(x_1, x')$ is strictly increasing in the left half of $\Omega$ in $x_1$-direction and
$$u(x_1, x')\leq u(-x_1, x'), \quad \forall \,\, x_1<0, \,\,(x_1,x')\in \Omega.$$
Moreover, if $f(x_1, x', u, p_1, p_2,\cdots, p_n)=f(-x_{1}, x', u, -p_1, p_2,\cdots, p_n)$ for any $x_{1}p_{1}\leq0$, then
$$u(x_1, x')=u(-x_1, x'), \quad \forall \,\, x_1<0, \,\,(x_1,x')\in \Omega.$$
\end{thm}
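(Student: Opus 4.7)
The proof proceeds by the method of moving planes along the $x_1$-direction. For $\lambda\leq 0$, set $T_\lambda:=\{x_1=\lambda\}$, $\Sigma_\lambda:=\{x\in\mathbb{R}^{n}:x_1<\lambda\}$, $x^\lambda:=(2\lambda-x_1,x')$, $u_\lambda(x):=u(x^\lambda)$, and $w_\lambda(x):=u_\lambda(x)-u(x)$; then $w_\lambda$ is anti-symmetric with respect to $T_\lambda$ and belongs to $\mathcal{L}_s(\mathbb{R}^{n})\cap C^{1,1}_{\text{loc}}(\Omega\cap\Sigma_\lambda)$. The convexity of $\Omega$ in $x_1$ together with its symmetry about $\{x_1=0\}$ ensures that the reflection of $\overline{\Omega}\cap\Sigma_\lambda$ across $T_\lambda$ lies in $\overline{\Omega}$, and at any $x\in\Sigma_\lambda\setminus\Omega$ one has $u(x)=0$ and $u_\lambda(x)=u(x^\lambda)\geq 0$, so the Dirichlet data furnish $w_\lambda\geq 0$ on $\Sigma_\lambda\setminus\Omega$ automatically.

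Next I derive the key linearized inequality for $w_\lambda$. Reflection invariance of $\mathrm{F}_{s}$ (Lemma \ref{lem0}(a)) gives $\mathrm{F}_{s}u_\lambda(x)=\mathrm{F}_{s}u(x^\lambda)$, while sub-additivity (Lemma \ref{lem0}(b)) applied to $u_\lambda=u+w_\lambda$ yields $\mathrm{F}_{s}w_\lambda(x)\leq\mathrm{F}_{s}u_\lambda(x)-\mathrm{F}_{s}u(x)$. Substituting \eqref{fractional-MA} at both $x$ and $x^\lambda$, at any $x\in\Omega\cap\Sigma_\lambda$ where $w_\lambda(x)<0$,
\[
\mathrm{F}_{s}w_\lambda(x)\;\leq\; f(x,u(x),\nabla u(x))-f\bigl(x^\lambda,u_\lambda(x),\nabla u(x^\lambda)\bigr).
\]
I split the right-hand side by first replacing $u(x)$ by $u_\lambda(x)$ via the Lipschitz continuity of $f$ in $u$ uniformly in $\mathbf{p}$ (contributing $-c_\lambda(x)w_\lambda(x)$ with $|c_\lambda|\leq C$), and then by applying the structural assumption \eqref{assump-f} with $\bar{x}_1=2\lambda-x_1\in[x_1,-x_1]$ to absorb the remaining spatial/gradient piece, using that $\nabla u_\lambda(x)$ differs from $\nabla u(x^\lambda)$ precisely by the first-component sign flip that appears in \eqref{assump-f}. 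The outcome is
\[
\mathrm{F}_{s}w_\lambda(x)-c_\lambda(x)w_\lambda(x)\leq 0\quad\text{at every }x\in\Omega\cap\Sigma_\lambda\text{ with }w_\lambda(x)<0,
\]
with $c_\lambda$ uniformly bounded in $\lambda$; an analogous computation (without the Lipschitz correction) gives $\mathrm{F}_{s}w_\lambda(x)\leq 0$ at zeros of $w_\lambda$, which is the ingredient required by the strong maximum principle part of Theorem \ref{MP Anti}.

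The moving plane argument now runs in two stages. \textbf{Start:} for $\lambda$ slightly larger than $\inf_\Omega x_1$, the set $\Omega\cap\Sigma_\lambda$ is narrow, so Theorem \ref{2NRP} applied to the anti-symmetric $w_\lambda$ yields $w_\lambda\geq 0$ in $\Sigma_\lambda$. \textbf{Continuation:} define $\lambda_0:=\sup\{\lambda\leq 0\,:\,w_\mu\geq 0\text{ in }\Sigma_\mu\text{ for all }\mu\leq\lambda\}$, and argue by contradiction that $\lambda_0=0$. If $\lambda_0<0$, then continuity gives $w_{\lambda_0}\geq 0$ in $\Sigma_{\lambda_0}$, and Theorem \ref{MP Anti} forces either $w_{\lambda_0}\equiv 0$ in $\mathbb{R}^{n}$ (excluded because $u>0$ in $\Omega$ while $u\equiv 0$ outside $\Omega$, so $w_{\lambda_0}$ is strictly positive at points $x\in\Sigma_{\lambda_0}\setminus\overline{\Omega}$ whose reflection lies in $\Omega$) or $w_{\lambda_0}>0$ strictly in $\Omega\cap\Sigma_{\lambda_0}$. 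In the latter case, the standard compactness-plus-narrow-region device (fix a compact $K\subset\Omega\cap\Sigma_{\lambda_0}$ with $w_{\lambda_0}\geq\delta>0$ on $K$ and $|(\Omega\cap\Sigma_{\lambda_0})\setminus K|$ small; continuity preserves $w_{\lambda_0+\varepsilon}>0$ on $K$, and Theorem \ref{2NRP} applies on the thin complement) produces $w_{\lambda_0+\varepsilon}\geq 0$ in $\Sigma_{\lambda_0+\varepsilon}$ for small $\varepsilon>0$, contradicting maximality of $\lambda_0$.

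Translating $w_\lambda\geq 0$ back into the original variables, for any $x_1<\bar{x}_1\leq-x_1$ the choice $\lambda=(x_1+\bar{x}_1)/2\leq 0$ makes $(x_1,x')^\lambda=(\bar{x}_1,x')$ and gives $u(\bar{x}_1,x')\geq u(x_1,x')$, which proves the monotonicity in the left half and the inequality $u(x_1,x')\leq u(-x_1,x')$. Strict monotonicity follows from the strong maximum principle part of Theorem \ref{MP Anti}, which at each $\lambda\in(\inf x_1,0)$ rules out $w_\lambda\equiv 0$ (same boundary argument) and hence forces $w_\lambda>0$ in $\Omega\cap\Sigma_\lambda$. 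Under the additional symmetry $f(x_1,x',u,p_1,p_2,\ldots,p_n)=f(-x_1,x',u,-p_1,p_2,\ldots,p_n)$ for $x_1p_1\leq 0$, rerunning the same scheme from the right side, namely with $\Sigma_\lambda:=\{x\in\Omega:x_1>\lambda\}$ and $\lambda\geq 0$, yields the reverse inequality $u(x_1,x')\geq u(-x_1,x')$ for $x_1<0$, giving the claimed symmetry. The main technical obstacle is the derivation of the linearized inequality in the second paragraph: the sign flip in the first component of $\nabla u(x^\lambda)$ relative to $\nabla u_\lambda(x)$ must be exactly matched by the $p_1\mapsto-p_1$ flip built into \eqref{assump-f}, while the Lipschitz-in-$u$ hypothesis, uniform in $\mathbf{p}$, must collect all the $u$-dependence into a single bounded coefficient without disturbing the gradient terms.
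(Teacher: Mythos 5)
Your overall moving-plane scheme matches the paper's, but there is a genuine gap at the crucial step where you derive the linearized inequality. You claim that $\mathrm{F}_{s}w_\lambda(x)-c_\lambda(x)w_\lambda(x)\leq 0$ holds at \emph{every} point $x\in\Omega\cap\Sigma_\lambda$ with $w_\lambda(x)<0$. This cannot be extracted from \eqref{assump-f}. After subtracting the equations you must compare $f\bigl(x,u(x),\nabla u(x)\bigr)$ with $f\bigl(x^\lambda,u_\lambda(x),\nabla u(x^\lambda)\bigr)$, and \eqref{assump-f} only applies when the two gradient arguments are literally of the form $(p_1,p_2,\dots,p_n)$ and $(-p_1,p_2,\dots,p_n)$ with $p_1\geq 0$. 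The chain-rule "sign flip" you invoke relates $\nabla u_\lambda(x)$ to $\nabla u(x^\lambda)$, not $\nabla u(x)$ to $\nabla u(x^\lambda)$; at a generic point these two gradients are unrelated (even for the model nonlinearity $u^{p}(1+|\nabla u|^{2})^{\sigma/2}$ the comparison fails pointwise). The identity $\nabla u(x^\lambda)=(-p_1,p_2,\dots,p_n)$ with $\mathbf{p}=\nabla u(x)$ is available only at interior critical points of $w_\lambda$, i.e.\ at its negative minimum points where $\nabla w_\lambda=0$ — which is exactly why the paper's maximum principles are applied in the weakened form of Remarks \ref{rem13} and \ref{rem14}, requiring the differential inequality only at such points. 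Moreover, even at a negative minimum you still need the sign condition $p_1=\partial_{x_1}u(x)\geq 0$ (equivalently $\partial_{x_1}u(x^\lambda)\leq 0$), which is not automatic and which you never address. The paper secures it by a double minimization: the levels $\lambda_k$ are chosen so that $\inf_{\Sigma_{\lambda_k}}w_{\lambda_k}=\inf_{\lambda\le\lambda_k}\inf_{\Sigma_\lambda}w_\lambda$ (see \eqref{eqn-0}--\eqref{eqn-1} and \eqref{eqn-6}), whence $\partial_\lambda w_\lambda\big|_{\lambda=\lambda_k}(x^k)\leq 0$, i.e.\ $(\partial_{x_1}u)\bigl((x^k)^{\lambda_k}\bigr)\leq 0$, and only then is \eqref{assump-f} applicable in \eqref{contradict}. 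Without this structure both your starting step and your continuation step are unjustified as written.

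A secondary inaccuracy: in the continuation step the leftover set $(\Sigma_{\lambda_0+\varepsilon}\cap\Omega)\setminus K$ is a thin neighborhood of a boundary, not a region trapped between two hyperplanes parallel to $T_{\lambda_0}$, and "small measure" is not the relevant notion of narrowness; so Theorem \ref{2NRP} does not apply to it as stated. The paper instead invokes the narrow region principle in unbounded open sets, Theorem \ref{32NRP} (with Remark \ref{rem3}), whose narrowness is measured by the width $d(D)=\sup_{x\in D}\mathrm{dist}(x,D^{c})$ together with the density condition \eqref{domain}. Your final symmetry argument (rerunning the scheme from the right) is fine and equivalent to the paper's device of applying the result to $\hat u(x_1,x'):=u(-x_1,x')$, but the proof as proposed is incomplete until the pointwise inequality issue and the sign condition on $\partial_{x_1}u$ are repaired.
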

\begin{proof}
Since $\Omega$ is bounded, we may assume that $\Omega\subset\{|x_1|\leq1\}$ and $\partial\Omega\cap\{x_1=-1\}\neq\emptyset$.

In order to carry out the moving planes procedure, we need to define some useful notations. For any $\lambda\in\mathbb{R}$, let $T_\lambda$ be the hyperplane in $\mathbb{R}^n$ given by
\begin{equation}\label{nota-1}
  T_\lambda:=\{x=(x_1,x')\in \mathbb{R}^n \mid x_1=\lambda\},
\end{equation}
and $x^\lambda:=(2\lambda-x_1,x')$ be the reflection of $x$ w.r.t. the plane $T_\lambda$. Denote
\begin{equation}\label{nota-2}
  \Sigma_\lambda:=\{x\in\mathbb{R}^n \mid x_1<\lambda\}, \qquad \tilde{\Sigma}_\lambda:=\{x\in\mathbb{R}^n \mid x_1>\lambda\}
\end{equation}
and
\begin{equation}\label{nota-3}
  u_\lambda(x):=u(x^\lambda), \quad w_\lambda(x):=u_\lambda(x)-u(x).
\end{equation}
Since $\Omega$ is convex in $x_1$-direction and symmetric w.r.t. $T_{0}$ and $u$ satisfies \eqref{fractional-MA}, one has $w_{\lambda}\geq0$ in $\Sigma_{\lambda}\setminus\Omega$ for any $\lambda\in(-\infty,0]$ and $w_{\lambda}\not\equiv0$ in $\Sigma_{\lambda}\setminus\Omega$ for any $\lambda\in(-\infty,0)$. Our goal is to show that $w_{\lambda}>0$ in $\Sigma_{\lambda}\cap\Omega$ for any $\lambda\in(-1,0)$.

\medskip

We will carry out the proof of Theorem \ref{boundedthm} by two steps.

\medskip

\emph{Step 1.} We will show via contradiction arguments that there exists $\epsilon>0$ small enough such that, for any $-1<\lambda\leq-1+\epsilon$,
\begin{equation}\label{start}
w_\lambda(x)\geq 0, \qquad \forall \,\, x\in\Sigma_\lambda\cap\Omega.
\end{equation}

Suppose \eqref{start} is not true, there exists a sequence $\{\lambda_{k}\}\subset(-1,0)$ satisfying $\lambda_k\rightarrow-1$ as $k\rightarrow+\infty$ such that
\begin{equation}\label{eqn-0}
  \inf\limits_{\Sigma_{\lambda_{k}}\cap\Omega}w_{\lambda_{k}}=\inf\limits_{\lambda\in(-1,\lambda_{k}]}\inf\limits_{x\in\Sigma_\lambda}w_\lambda(x)<0.
\end{equation}
Consequently, there exists $x^k\in\Sigma_{\lambda_{k}}\cap\Omega$ such that
\begin{equation}\label{eqn-1}
  w_{\lambda_{k}}(x^{k})=\inf\limits_{\Sigma_{\lambda_{k}}\cap\Omega}w_{\lambda_{k}}=\inf\limits_{\Sigma_{\lambda_{k}}}w_{\lambda_{k}}<0.
\end{equation}
It follows directly from \eqref{eqn-0} and \eqref{eqn-1} that $\frac{\partial w_{\lambda}}{\partial\lambda}|_{\lambda=\lambda_k}(x^k)\leq0$, and hence $(\partial_{x_1}u)[(x^k)^{\lambda_k}]\leq0$. Note that $x^k$ is the interior minimum of $w_{\lambda_k}(x)$, then one has $\nabla_{x}w_{\lambda_k}(x^k)=0$, i.e.,
\begin{equation}\label{nabla}
(\nabla_{x}u_{\lambda_k})(x^k)=(\nabla_{x}u)(x^k).
\end{equation}
By the assumption \eqref{assump-f} in Theorem \ref{boundedthm} and \eqref{fractional-MA}, we have
\begin{equation}\begin{split}\label{contradict}
& \quad \mathbf{F_{s}}w_{\lambda_{k}}(x^{k})\leq\mathbf{F_{s}}u_{\lambda_k}(x^k)-\mathbf{F_{s}}u(x^k)\\
&=-f\big((x^k)^{\lambda_k},u_{\lambda_k}(x^k),(\nabla_{x}u)((x^k)^{\lambda_k})\big)
+f\big(x^k,u(x^k),(\nabla_{x}u)(x^k)\big)\\
&\leq -f\big(x^k,u_{\lambda_k}(x^k),(\nabla_{x}u)(x^k)\big)
+f\big(x^k,u(x^k),(\nabla_{x}u)(x^k)\big)\\
&=:c(x^k)w_{\lambda_k}(x^k),
\end{split}\end{equation}
where
\begin{equation}\label{eqn-2}
  c(x^k):=-\frac{f\left(x^k,u_{\lambda_k}(x^k),(\nabla_xu)(x^k)\right)-f\left(x^k,u(x^k),(\nabla_xu)(x^k)\right)}{u_{\lambda_k}(x^k)-u(x^k)}
\end{equation}
is uniformly bounded independent of $k$, since $f(x, u, \mathbf{p})\in\mathcal{F}$ and $u \in C(\mathbb{R}^n)$ with compact support.

Note that $\Sigma_{\lambda_{k}}\cap\Omega$ is a narrow region for $k$ large enough. From \eqref{contradict}, \eqref{eqn-2} and the \emph{Narrow region principle} Theorem \ref{2NRP} (see Remark \ref{rem13} and \ref{rem14}), one can derive that, for $k$ sufficiently large,
\begin{equation}\label{eqn-3}
  w_{\lambda_{k}}>0 \qquad \text{in} \,\, \Sigma_{\lambda_{k}}\cap\Omega,
\end{equation}
which yields a contradiction with \eqref{eqn-0}. Hence there exists an $\epsilon>0$ small enough such that, \eqref{start} holds for any $-1<\lambda\leq-1+\epsilon$. Furthermore, suppose there exist $\hat{\lambda}\in(-1,-1+\epsilon]$ and $\hat{x}\in\Sigma_{\hat{\lambda}}\cap\Omega$ such that $w_{\hat{\lambda}}(\hat{x})=0$, then similar to \eqref{contradict}, we can deduce from the assumption \eqref{assump-f} in Theorem \ref{boundedthm} and \eqref{fractional-MA} that
\begin{equation}\begin{split}\label{eqn-4}
& \quad \mathbf{F_{s}}w_{\hat{\lambda}}(\hat{x})\leq\mathbf{F_{s}}u_{\hat{\lambda}}(\hat{x})-\mathbf{F_{s}}u(\hat{x})\\
&=-f\big((\hat{x})^{\hat{\lambda}},u_{\hat{\lambda}}(\hat{x}),(\nabla_{x}u)((\hat{x})^{\hat{\lambda}})\big)
+f\big(\hat{x},u(\hat{x}),(\nabla_{x}u)(\hat{x})\big)\\
&\leq -f\big(\hat{x},u_{\hat{\lambda}}(\hat{x}),(\nabla_{x}u)(\hat{x})\big)
+f\big(\hat{x},u(\hat{x}),(\nabla_{x}u)(\hat{x})\big)\\
&=0,
\end{split}\end{equation}
then it follows from the strong maximum principle Lemma \ref{SMP-anti} that $w_{\hat{\lambda}}=0$ a.e. in $\mathbb{R}^{n}$, which is absurd. Therefore, we have, for any $-1<\lambda\leq-1+\epsilon$,
\begin{equation}\label{aim}
w_\lambda(x)>0, \qquad \forall \,\, x\in\Sigma_\lambda\cap\Omega.
\end{equation}

\medskip

\emph{Step 2.} Move the plane continuously to the right until its limiting position. Step 1 provides a starting point for us to move planes. Next we will continue to move $T_{\lambda}$ to the right as long as \eqref{aim} holds.

To this end, let us define
\begin{equation}\label{221-def}
  \lambda_{0}:=\sup\left\{\lambda\in(-1,0] \mid w_{\mu}>0 \,\, \text{in} \,\, \Sigma_{\mu}\cap\Omega, \,\, \forall \, -1<\mu<\lambda\right\}.
\end{equation}
We aim to show that $\lambda_{0}=0$ via contradiction arguments.

Suppose on the contrary that $\lambda_{0}<0$, then we will be able to move $T_{\lambda}$ to the right a little bit further while \eqref{aim} still holds, which contradicts the definition \eqref{221-def} of $\lambda_{0}$.

Indeed, due to $\lambda_{0}<0$, one can infer from \eqref{fractional-MA} that $w_{\lambda_{0}}>0$ in $\left(\Omega^{\lambda_{0}}\setminus\Omega\right)\cap\Sigma_{\lambda_{0}}$ ($A^{\lambda}$ denotes the reflection of a set $A$ w.r.t. $T_{\lambda}$), and hence the strong maximum principle Lemma \ref{SMP-anti} yields that
\begin{equation}\label{221-2}
  w_{\lambda_{0}}(x)>0, \qquad \forall \,\, x\in\Sigma_{\lambda_{0}}\cap\Omega.
\end{equation}
Since $w_{\lambda_{0}}>0$ in $\Omega^{\lambda_{0}}\cap \Sigma_{\lambda_{0}}$, so there exists a compact subset $K\subset\subset\Omega^{\lambda_{0}}\cap \Sigma_{\lambda_{0}}$ and a constant $c>0$ such that
\begin{equation}\label{221-3}
  w_{\lambda_{0}}(x)\geq c>0, \qquad \forall \,\, x\in K\cap\Omega,
\end{equation}
and $(\Sigma_{\lambda_{0}}\cap\Omega)\setminus(K\cap\Omega)$ is a narrow region. Due to the continuity of $w_{\lambda}$ w.r.t. $\lambda$, we get, there exists a sufficiently small $0<\epsilon<\min\{-\lambda_{0},\lambda_{0}+1\}$ such that, for any $\lambda\in[\lambda_{0},\lambda_{0}+\epsilon]$,
\begin{equation}\label{221-4}
  w_{\lambda}(x)>0, \quad \forall \,\, x\in K\cap\Omega,
\end{equation}
and $(\Sigma_{\lambda_{0}+\epsilon}\cap\Omega)\setminus(K\cap\Omega)$ is also a narrow region.

For any $\lambda\in[\lambda_{0},\lambda_{0}+\epsilon]$, note that $(\Sigma_{\lambda}\cap\Omega)\setminus(K\cap\Omega)$ is a narrow region, we will deduce from the \emph{Narrow region principle} Theorem \ref{32NRP} that
\begin{equation}\label{eqn-5}
  w_{\lambda}(x)\geq0, \quad \forall \,\, x\in(\Sigma_{\lambda}\cap\Omega)\setminus(K\cap\Omega).
\end{equation}

Indeed, by \eqref{221-4}, if we suppose \eqref{eqn-5} does not hold, then there exists a $\tilde{\lambda}\in(\lambda_{0},\lambda_{0}+\epsilon]$ (depending on $\epsilon$) such that
\begin{equation}\label{eqn-6}
  \inf\limits_{(\Sigma_{\tilde{\lambda}}\cap\Omega)\setminus(K\cap\Omega)}w_{\tilde{\lambda}}=\inf\limits_{\Sigma_{\tilde{\lambda}}\cap\Omega}w_{\tilde{\lambda}}
  =\inf\limits_{\lambda\in(\lambda_{0},\tilde{\lambda}]}\inf\limits_{x\in\Sigma_\lambda}w_\lambda(x)<0.
\end{equation}
Consequently, there exists $\tilde{x}\in(\Sigma_{\tilde{\lambda}}\cap\Omega)\setminus(K\cap\Omega)$ such that
\begin{equation}\label{eqn-7}
  w_{\tilde{\lambda}}(\tilde{x})=\inf\limits_{(\Sigma_{\tilde{\lambda}}\cap\Omega)\setminus(K\cap\Omega)}w_{\tilde{\lambda}}
  =\inf\limits_{\Sigma_{\tilde{\lambda}}\cap\Omega}w_{\tilde{\lambda}}=\inf\limits_{\Sigma_{\tilde{\lambda}}}w_{\tilde{\lambda}}<0.
\end{equation}
Then, similar to \eqref{contradict} and \eqref{eqn-2}, by the assumption \eqref{assump-f} in Theorem \ref{boundedthm} and \eqref{fractional-MA}, we have
\begin{equation}\begin{split}\label{eqn-8}
& \quad \mathbf{F_{s}}w_{\tilde{\lambda}}(\tilde{x})\leq\mathbf{F_{s}}u_{\tilde{\lambda}}(\tilde{x})-\mathbf{F_{s}}u(\tilde{x})\\
&=-f\big((\tilde{x})^{\tilde{\lambda}},u_{\tilde{\lambda}}(\tilde{x}),(\nabla_{x}u)((\tilde{x})^{\tilde{\lambda}})\big)
+f\big(\tilde{x},u(\tilde{x}),(\nabla_{x}u)(\tilde{x})\big)\\
&\leq -f\big(\tilde{x},u_{\tilde{\lambda}}(\tilde{x}),(\nabla_{x}u)(\tilde{x})\big)
+f\big(\tilde{x},u(\tilde{x}),(\nabla_{x}u)(\tilde{x})\big)\\
&=:c(\tilde{x})w_{\tilde{\lambda}}(\tilde{x}),
\end{split}\end{equation}
where
\begin{equation}\label{eqn-9}
  c(\tilde{x}):=-\frac{f\left(\tilde{x},u_{\tilde{\lambda}}(\tilde{x}),(\nabla_{x}u)(\tilde{x})\right)-f\left(\tilde{x},u(\tilde{x}),(\nabla_{x}u)(\tilde{x})\right)}
  {u_{\tilde{\lambda}}(\tilde{x})-u(\tilde{x})}
\end{equation}
is uniformly bounded (independent of $\epsilon$ and $K$), since $f(x, u, \mathbf{p})\in\mathcal{F}$ and $u \in C(\mathbb{R}^n)$ with compact support. Note that, by choosing $K$ larger and $\epsilon$ smaller if necessary, $(\Sigma_{\tilde{\lambda}}\cap\Omega)\setminus(K\cap\Omega)$ is a narrow region. From \eqref{eqn-8}, \eqref{eqn-9} and the \emph{Narrow region principle} Theorem \ref{32NRP} (see Remark \ref{rem3}), one can derive that, for $\epsilon$ sufficiently small,
\begin{equation}\label{eqn-10}
  w_{\tilde{\lambda}}>0 \qquad \text{in} \,\, (\Sigma_{\tilde{\lambda}}\cap\Omega)\setminus(K\cap\Omega),
\end{equation}
which yields a contradiction with \eqref{eqn-6}. As a consequence, we have, for any $\lambda\in[\lambda_{0},\lambda_{0}+\epsilon]$, \eqref{eqn-5} holds. Furthermore, it follows from the strong maximum principle that
\begin{equation}\label{221-5}
  w_{\lambda}(x)>0, \qquad \forall \,\, x\in(\Sigma_{\lambda}\cap\Omega)\setminus(K\cap\Omega),
\end{equation}
and hence, for any $\lambda\in[\lambda_{0},\lambda_{0}+\epsilon]$,
\begin{equation}\label{221-6}
  w_{\lambda}(x)>0, \qquad \forall \,\, x\in\Sigma_{\lambda}\cap\Omega.
\end{equation}
This contradicts with the definition \eqref{221-def} of $\lambda_{0}$. Thus $\lambda_{0}=0$, or more precisely,
\begin{equation}\label{eq0}
  u(x_1, x')\leq u(-x_1, x'), \qquad \forall \,\, (x_1,x')\in \Omega, \,\, x_{1}<0.
\end{equation}

Furthermore, from the definition of $\lambda_{0}$, we can deduce that
\begin{equation*}
w_{\lambda}>0 \quad \text{in} \,\, \Sigma_\lambda\cap\Omega, \quad \forall \,\, \lambda<0.
\end{equation*}
For any $(x_1,x')$, $(\bar{x}_1,x')\in \Omega$ with $0>x_1>\bar{x}_1$, one can take $\lambda=\frac{x_{1}+\bar{x}_{1}}{2}$. Then we have
$$u(x_{1},x')>u(\bar{x}_{1},x'),$$
and hence $u(x_{1},x')$ is strictly increasing in the left half of $\Omega$ in $x_{1}$-direction.

Moreover, if $f(x_1, x', u, p_1, p_2,\cdots, p_n)= f(-x_{1}, x', u, -p_1, p_2,\cdots, p_n)$, then one can easily verify that $\hat{u}(x_{1},x'):=u(-x_{1},x')$ also solves \eqref{fractional-MA}. Thus we have derived that
\begin{equation}\label{eq2}
  \hat{u}(x_1,x')\leq \hat{u}(-x_1, x'), \qquad \forall \,\, (x_1,x')\in \Omega, \,\, x_{1}<0,
\end{equation}
or equivalently,
\begin{equation}\label{eq3}
  u(x_1, x')\geq u(-x_1, x'), \qquad \forall \,\, (x_1,x')\in \Omega, \,\, x_{1}<0.
\end{equation}
Combining this with \eqref{eq0} yields that
\begin{equation}\label{eq4}
  u(x_1, x')=u(-x_1, x'), \qquad \forall \,\, (x_1,x')\in \Omega, \,\, x_{1}<0,
\end{equation}
that is, $u$ is symmetric in the $x_1$ direction about $\{x\in\mathbb{R}^{n} \mid x_{1}=0\}$. This completes the proof of Theorem \ref{boundedthm}.
\end{proof}

\begin{rem}\label{rem5}
Typical forms of $f(x,u,\nabla u)$ which satisfies all the assumptions in Theorem \ref{boundedthm} include: $f(x,u,\nabla u)=u^{p}(1+|\nabla u|^{2})^{\frac{\sigma}{2}}$ with $p\geq1$ and $\sigma\leq0$, $f(x,u,\nabla u)=e^{\kappa u}(1+|\nabla u|^{2})^{\frac{\sigma}{2}}$ with $\kappa\in\mathbb{R}$ and $\sigma\leq0$, and $f(x,u,\nabla u)=K(x)(1+|\nabla u|^{2})^{\frac{\sigma}{2}}$ with $\sigma\in\mathbb{R}$, $K(x)=K(|x_{1}|,x')$ and $K(r,x')$ is nonincreasing w.r.t. $r\in[0,+\infty)$.
\end{rem}

As an immediate application of Theorem \ref{boundedthm}, we have the following corollary.

\begin{cor}\label{cor0}
Assume $u\in C_{loc}^{1,1}(B_{1}(0))\cap C(\overline{B_{1}(0)})$ solves
\begin{equation}\begin{cases}\label{nonlocal-MA-ball}
-\mathbf{F_{s}}u(x)=f(u(x)) \qquad \text{in}\ \ B_{1}(0),\\ \quad u>0 \qquad \text{in}\ \ B_{1}(0),\\
\quad u\equiv0 \qquad \text{in}\ \ \mathbb{R}^{n}\setminus B_{1}(0),
\end{cases}\end{equation}
where $0<s<1$, and $f(\cdot)$ is locally Lipschitz continuous. Then $u$ must be symmetric and strictly monotone decreasing (along the direction $\mathbf{e}$) w.r.t. the hyper-plane $\{x\in\mathbb{R}^{n}\mid x\cdot \mathbf{e}=0\}$.
\end{cor}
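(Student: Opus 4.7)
The plan is to reduce Corollary \ref{cor0} to repeated applications of Theorem \ref{boundedthm}, exploiting the rotational invariance of $B_1(0)$ and of the operator $\mathrm{F}_s$.

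First, I would verify that the hypotheses of Theorem \ref{boundedthm} hold with $\Omega = B_1(0)$ and the $x_1$-axis taken as the direction of symmetry. The ball is convex in the $x_1$-direction and symmetric with respect to the hyperplane $\{x_1 = 0\}$. Since $f(u)$ depends only on $u$ and is locally Lipschitz, it lies in the class $\mathcal{F}$; moreover, both the inequality in \eqref{assump-f} and the symmetric equality variant stated at the end of Theorem \ref{boundedthm} reduce to the tautology $f(u) = f(u)$, because $f$ has no explicit $x$- or $\mathbf{p}$-dependence. Applying Theorem \ref{boundedthm} then yields simultaneously the reflection identity $u(x_1,x') = u(-x_1,x')$ on $B_1(0)$ and the strict monotonicity of $x_1 \mapsto u(x_1,x')$ in the slab $\{x_1 < 0\} \cap B_1(0)$.

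Next, I would exploit the invariance of $B_1(0)$ under the orthogonal group $O(n)$. For any unit vector $e \in S^{n-1}$, choose an orthogonal change of coordinates so that $e$ plays the role of $-e_1$; by Lemma \ref{lem0}, $\mathrm{F}_s$ is invariant under rotations, so in the new coordinates $u$ still solves \eqref{nonlocal-MA-ball} on the same ball, and the previous step produces reflection symmetry with respect to the hyperplane through the origin normal to $e$. Letting $e$ range over $S^{n-1}$ forces $u$ to be symmetric with respect to every hyperplane through the origin, which in turn implies radial symmetry: $u(x) = U(|x|)$ for some function $U$ on $[0,1]$.

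Finally, the strict monotonicity in the $x_1$-direction on the left half-ball provided by Theorem \ref{boundedthm}, combined with radial symmetry, forces $U$ to be strictly decreasing on $[0,1]$: for $0 \leq r_1 < r_2 \leq 1$, evaluating at $(-r_1, 0, \dots, 0)$ and $(-r_2, 0, \dots, 0)$ gives $U(r_2) < U(r_1)$. I do not expect any real obstacle in this argument: the whole corollary is a clean consequence of Theorem \ref{boundedthm} once one records the rotational invariance of both the domain and the operator, together with the observation that the $x$- and $\nabla u$-independence of $f$ trivializes the structural hypotheses on the nonlinearity.
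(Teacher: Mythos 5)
Your proposal is correct and follows essentially the same route as the paper, which justifies Corollary \ref{cor0} in one line by noting that all hypotheses of Theorem \ref{boundedthm} hold and that $f(u)$, having no $x$- or $\nabla u$-dependence, satisfies \eqref{assump-f} in every direction, so that applying the theorem in all directions (using the rotational invariance of $B_1(0)$ and of $\mathrm{F}_s$ from Lemma \ref{lem0}) yields radial symmetry and strict radial monotonicity. Your write-up merely makes the rotation argument explicit; the only hair-thin point worth recording is strictness at the center $r_1=0$, which follows from $w_\lambda>0$ in $\Sigma_\lambda\cap\Omega$ for every $\lambda<0$ (take $\lambda=-r/2$) rather than from the open-half-ball monotonicity statement alone.
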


One can easily verify the validity of Corollary \ref{cor0}, since all the assumptions in Theorem \ref{boundedthm} are fulfilled and $f(u)$ satisfies the assumption \eqref{assump-f} in the direction $\mathbf{e}\in\mathbb{R}^{n}$.

\begin{rem}\label{rem4}
Typical forms of $f(u)$ satisfying the assumption in Corollary \ref{cor0} include: $f(u)=u^{p}$ with $p\geq1$ and $f(u)=e^{\kappa u}$ with $\kappa\in\mathbb{R}$.
\end{rem}

\subsection{Unbounded domain}

Suppose without loss of generalities that $\mathbf{e}=\mathbf{e_1}$. To state our monotonicity and symmetry results for unbounded domain $\Omega$, we need to assume the following condition on $f(x,u,\mathbf{p})$: for some $\beta>0$,
\begin{equation}\label{hypo-f}
\frac{|f(x, u_1, \mathbf{p})-f(x, u_2, \mathbf{p})|}{|u_1-u_2|}\leq C(|u_1|^{\beta}+|u_2|^{\beta}) \qquad \text{as}\ \ u_1, u_2\rightarrow0.
\end{equation}

Our monotonicity and symmetry results in unbounded domain $\Omega$ is the following theorem. For related results for $-\Delta$, $(-\Delta)^{s}$, $(-\Delta+m^{2})^{s}$, $(-\Delta)^{s}_{p}$ or second order Monge-Amp\`{e}re operator, please refer to \cite{BCN1,BCN2,BN1,BN2,BN3,CGS,Cheng1,CHL,CL2,CLM,DLW,DQW,GNN1,Li1,Li2}.
\begin{thm}\label{unboundedthm}
Let $\Omega$ be a unbounded domain in $\mathbb{R}^n$ which is convex in $x_1$-direction and symmetric about $\{x\in\mathbb{R}^{n} \mid x_1 = 0\}$. Suppose that $u\in C^{1,1}_{loc}(\Omega)\cap C(\overline{\Omega})\cap\mathcal{L}_{s}(\mathbb{R}^n)$ solves \eqref{fractional-MA} for $s\in(0,1)$ and $f(x,u,\mathbf{p})\in\mathcal{F}$. If $f(x,u,\mathbf{p})$ satisfies \eqref{assump-f}, \eqref{hypo-f} and $u(x)$ satisfies the following asymptotic properties:
\begin{equation}\label{asymp1}
\limsup\limits_{\substack{|x|\rightarrow\infty \\ x\in\Omega, \, x_1<0}}|x|^{2s}\left[u(x)\right]^{\beta}<\frac{C_{n,s,\theta}}{8C}.
\end{equation}
where $C_{n,s,\theta}$ is the same constant as in \eqref{decay_con} in Theorem \ref{P:decay} and $C$ is the constant in assumption \eqref{hypo-f}. Then we have\\
(i) If there exists a line $\mathcal{L}$ parallel to $x_{1}$-axis satisfying $\mathcal{L}\cap\Omega\neq\emptyset$ such that $\mathcal{L}\cap\Omega^{c}\neq\emptyset$, then $u(x_1, x')$ is strictly increasing in the left half of $\Omega$ in $x_1$-direction and
$$u(x_1, x')\leq u(-x_1, x'), \qquad \forall \,\, x_1<0, \,\, (x_1,x')\in \Omega.$$
(ii) If any line $\mathcal{L}$ parallel to $x_{1}$-axis such that $\mathcal{L}\cap\Omega\neq\emptyset$ must satisfy $\mathcal{L}\cap\Omega^{c}=\emptyset$, then there exists $\mu_{0}\leq0$ such that $u(x_1, x')$ is strictly increasing in $\Omega\cap\{x_{1}<\mu_{0}\}$ in $x_1$-direction and
$$\text{if} \,\, \mu_{0}=0, \qquad  u(x_1, x')\leq u(-x_1, x'), \quad \forall \,\, x_1<0, \,\, (x_1,x')\in \Omega,$$
$$\text{if} \,\, \mu_{0}<0, \qquad  u(x_1, x')=u(2\mu_{0}-x_1, x'), \quad \forall \,\, x_1<\mu_{0}, \,\, (x_1,x')\in \Omega.$$
\end{thm}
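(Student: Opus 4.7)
The plan is to run the method of moving planes in the $x_1$-direction, following the architecture of Theorem \ref{boundedthm} but replacing each bounded-domain tool by its unbounded counterpart from Subsection~2.2. Write $T_\lambda$, $\Sigma_\lambda$, $x^\lambda$, $u_\lambda(x):=u(x^\lambda)$, $w_\lambda:=u_\lambda-u$ as before. Because $u\equiv 0$ outside $\Omega$ and $u\geq 0$, we have $w_\lambda\geq 0$ on $\Sigma_\lambda\setminus\Omega$ automatically for every $\lambda\leq 0$, so the entire analysis reduces to controlling $w_\lambda$ on $\Sigma_\lambda\cap\Omega$.

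\smallskip

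\emph{Step 1 (Starting the procedure.)} If $w_\lambda(\bar x)<0$ at an interior minimum $\bar x\in\Sigma_\lambda\cap\Omega$, the computation of Theorem \ref{boundedthm}---using $\nabla w_\lambda(\bar x)=0$ together with hypothesis \eqref{assump-f}---yields $\mathrm{F}_{s} w_\lambda(\bar x)-c(\bar x)w_\lambda(\bar x)\leq 0$ with
\[ c(\bar x):=-\frac{f(\bar x,u_\lambda(\bar x),\nabla u(\bar x))-f(\bar x,u(\bar x),\nabla u(\bar x))}{u_\lambda(\bar x)-u(\bar x)}. \]
Since $0\leq u_\lambda(\bar x)<u(\bar x)$, hypothesis \eqref{hypo-f} gives $|c(\bar x)|\leq 2C\,u(\bar x)^{\beta}$ as $u(\bar x)\to 0$, and combined with \eqref{asymp1} one has $\liminf_{|\bar x|\to\infty}|\bar x|^{2s}c(\bar x)>-C_{n,s,\theta}/4$. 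The Decay at Infinity results (Theorems \ref{P:decay} and \ref{dati}) then trap any such $\bar x$ inside a universal ball $B_{R_{0}}(0)$; choosing $\lambda<-R_{0}$ forces $|\bar x|\geq|\bar x_1|>R_{0}$ and excludes this possibility, so $w_\lambda\geq 0$ on $\Sigma_\lambda$ for all sufficiently negative $\lambda$.

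\smallskip

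\emph{Step 2 (Moving the plane to the critical position.)} Define
\[ \lambda_{0}:=\sup\bigl\{\lambda\leq 0 : w_\mu\geq 0 \text{ in }\Sigma_\mu \text{ for every }\mu\in(-\infty,\lambda]\bigr\}, \]
so $\lambda_{0}\in[-R_{0},0]$ and $w_{\lambda_{0}}\geq 0$ on $\Sigma_{\lambda_{0}}$ by continuity. Assume $\lambda_{0}<0$; I claim $w_{\lambda_{0}}\equiv 0$ on $\Sigma_{\lambda_{0}}$. If not, Lemma \ref{SMP-anti} upgrades to $w_{\lambda_{0}}>0$ strictly on $\Sigma_{\lambda_{0}}\cap\Omega$; for a large $R$ and a compact $K\subset\subset\Sigma_{\lambda_{0}}\cap\Omega\cap B_{R}(0)$, continuity forces $w_\lambda\geq c_{0}>0$ on $K$ for $\lambda\in[\lambda_{0},\lambda_{0}+\epsilon]$, so one only needs to control $w_\lambda$ on the residual set $(\Sigma_{\lambda_{0}+\epsilon}\cap\Omega)\setminus K$. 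Splitting this set into a narrow slab of width $O(\epsilon)$ adjacent to $T_{\lambda_{0}+\epsilon}$ and the tail $\{|x|>R\}$, I apply the unbounded Narrow Region Principle (Theorem \ref{NRP-Anti-ubdd}) on the slab and the sharp Decay at Infinity (Theorem \ref{dati}) on the tail, with $c$ again controlled by \eqref{hypo-f} and \eqref{asymp1}. Choosing $\epsilon$ small and $R$ large yields $w_{\lambda_{0}+\epsilon}\geq 0$ on $\Sigma_{\lambda_{0}+\epsilon}$, contradicting the maximality of $\lambda_{0}$.

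\smallskip

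\emph{Step 3 (Dichotomy and strict monotonicity.)} If $w_{\lambda_{0}}\equiv 0$ with $\lambda_{0}<0$, then $u(x)=u(x^{\lambda_{0}})$ on $\mathbb{R}^{n}$, and since $\Omega=\{u>0\}$ this forces $\Omega=\Omega^{\lambda_{0}}$. Under assumption (i), the line $\mathcal{L}$ meets $\Omega$ in a bounded interval symmetric only about $0$, so $\Omega^{\lambda_{0}}\neq\Omega$ for any $\lambda_{0}<0$; hence $\lambda_{0}=0$, giving $u(x_{1},x')\leq u(-x_{1},x')$ for $x_{1}<0$. Under assumption (ii), $\Omega$ is a cylinder invariant under every reflection $T_\mu$, so setting $\mu_{0}:=\lambda_{0}$ produces the stated dichotomy (with symmetry $u(x_{1},x')=u(2\mu_{0}-x_{1},x')$ when $\mu_{0}<0$). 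Strict monotonicity on $\Omega\cap\{x_{1}<\mu_{0}\}$ follows by the standard reflection trick: for $\bar x_{1}<x_{1}<\mu_{0}$, set $\lambda=(x_{1}+\bar x_{1})/2<\mu_{0}$; then $w_\lambda\geq 0$ and $w_\lambda\not\equiv 0$ (otherwise symmetry about two distinct planes would make $u$ periodic in $x_{1}$, incompatible with $u\in\mathcal{L}_{s}(\mathbb{R}^{n})$ and $u>0$ on $\Omega$), so Lemma \ref{SMP-anti} delivers $w_\lambda>0$ and in particular $u(\bar x_{1},x')<u(x_{1},x')$.

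\smallskip

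The genuine technical obstacle lies in the continuation of Step~2, where the set on which $w_\lambda\geq 0$ must be upgraded to a neighborhood of $\lambda_{0}$ is neither bounded nor uniformly narrow. Cleanly decomposing it into a narrow slab and an at-infinity tail, and balancing the narrow-region constant from Theorem \ref{NRP-Anti-ubdd} against the decay constant from Theorem \ref{dati} while keeping $c(x)$ under the universal threshold produced by \eqref{hypo-f}--\eqref{asymp1}, is the delicate calibration at the core of the argument.
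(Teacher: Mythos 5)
Your Step 1 and the case dichotomy of Step 3 follow the paper's proof closely, but Step 2 has a genuine gap. The residual set $(\Sigma_{\lambda_0+\epsilon}\cap\Omega)\setminus K$ does \emph{not} decompose into a slab of width $O(\epsilon)$ adjacent to $T_{\lambda_0+\epsilon}$ plus the tail $\{|x|>R\}$: since $K$ is compactly contained in the open set $\Sigma_{\lambda_0}\cap\Omega\cap B_R(0)$, the residual set also contains a collar along $\partial\Omega\cap\Sigma_{\lambda_0}\cap B_R(0)$ (and near the reflected boundary $\partial\Omega^{\lambda_0}$), which lies far from $T_{\lambda_0+\epsilon}$. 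On that collar you have no uniform lower bound $w_{\lambda}\geq c_0$: on every portion of $\partial\Omega$ parallel to the $x_1$-axis one has $w_{\lambda_0}=0$ (this is exactly the situation in case (ii), where $\Omega$ is a cylinder, and it can also occur in case (i)), so continuity on compacta gives nothing there. Nor does Theorem \ref{NRP-Anti-ubdd} apply to it: that theorem requires the set where $w$ may be negative to lie between two hyperplanes \emph{parallel to the reflection plane} $T_{\lambda}$ at small mutual distance, whereas the collar hugs $\partial\Omega$ and extends over an $x_1$-range of order $R$; and you cannot treat the collar separately from the slab, because the exterior hypothesis $w\geq 0$ in $\Sigma_\lambda\setminus\Omega$ of that theorem would then require positivity on the very slab you have not yet controlled. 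This is precisely why the paper does not use the slab-type principle at this stage: it first shows, via the decay computation \eqref{Decay-3'}--\eqref{Decay-4'} and \eqref{asymp1}, that the negative minimum points of $w_{\lambda_k}$ stay in a fixed ball $B_{R_\ast}(0)$, and then invokes the inradius-type \emph{Narrow region principle in unbounded open sets}, Theorem \ref{32NRP} with Remark \ref{rem3}, whose notion of narrowness ($\sup_{x\in D}\mathrm{dist}(x,D^{c})$ small, together with the density condition \eqref{domain}) is designed exactly to cover such boundary collars. As written, your continuation argument fails on this collar, and no calibration of the constants in Theorems \ref{NRP-Anti-ubdd} and \ref{dati} repairs it without a new ingredient.

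A smaller point: in Step 3 you exclude $w_{\lambda}\equiv 0$ for $\lambda<\mu_0$ by saying that periodicity in $x_1$ is incompatible with $u\in\mathcal{L}_{s}(\mathbb{R}^{n})$ and $u>0$ on $\Omega$; this is not true (a bounded positive function on a cylinder, periodic in $x_1$, does belong to $\mathcal{L}_{s}(\mathbb{R}^{n})$). The correct obstruction is the decay forced by \eqref{asymp1}, which yields $u(x)\rightarrow 0$ as $|x|\rightarrow\infty$ in $\Omega\cap\{x_1<0\}$ and hence rules out nontrivial periodicity; this is also what the paper uses (already in its Step 1) to guarantee $w_{\lambda}\not\equiv 0$.
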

\begin{proof}
We will use the same notations as in the proof of Theorem \ref{boundedthm}. The proof of Theorem \ref{unboundedthm} will be carried out by two steps.

\medskip

\emph{Step 1.} We first show that there exists $R_0>0$ large enough such that
\begin{equation}\label{unbddstep2}
w_\lambda\geq 0 \quad \text{in} \,\, \Sigma_\lambda, \qquad \forall \,\, \lambda\leq -R_0.
\end{equation}

Indeed, since $u$ satisfies \eqref{fractional-MA}, we infer from the asymptotic property \eqref{asymp1} that, for any $\lambda\leq0$, the negative minimum of $w_{\lambda}$ can be attained in $\Sigma_{\lambda}\cap\Omega$. Suppose on the contrary that \eqref{unbddstep2} is not true, then there exists a sequence $\lambda_k\rightarrow-\infty$ as $k\rightarrow+\infty$ such that
\begin{equation}\label{eqn-11}
 \inf\limits_{x\in\Sigma_{\lambda_{k}}\cap\Omega}w_{\lambda_{k}}(x)=\inf\limits_{\lambda \leq\lambda_k}\inf\limits_{x\in\Sigma_\lambda}w_\lambda(x)<0
\end{equation}
for every $k=1,2,\cdots$. Moreover, for every $k=1,2,\cdots$, $\inf\limits_{\Sigma_{\lambda_{k}}\cap\Omega}w_{\lambda_{k}}$ can be attained at some $x^{k}\in\Sigma_{\lambda_{k}}\cap\Omega$, that is,
\begin{equation}\label{eqn-12}
w_{\lambda_{k}}(x^k)=\inf\limits_{\Sigma_{\lambda_{k}}\cap\Omega}w_{\lambda_{k}}=\inf\limits_{\Sigma_{\lambda_{k}}}w_{\lambda_{k}}<0.
\end{equation}

Then, similar to \eqref{contradict} and \eqref{eqn-2} in Step 1 in the proof of Theorem \ref{boundedthm}, by the assumptions \eqref{assump-f} in Theorem \ref{unboundedthm} and \eqref{fractional-MA}, we have
\begin{equation}\begin{split}\label{eqn-13}
& \quad \mathbf{F_{s}}w_{\lambda_{k}}(x^{k})\leq\mathbf{F_{s}}u_{\lambda_k}(x^k)-\mathbf{F_{s}}u(x^k)\\
&=-f\big((x^k)^{\lambda_k},u_{\lambda_k}(x^k),(\nabla_{x}u)((x^k)^{\lambda_k})\big)
+f\big(x^k,u(x^k),(\nabla_{x}u)(x^k)\big)\\
&\leq -f\big(x^k,u_{\lambda_k}(x^k),(\nabla_{x}u)(x^k)\big)
+f\big(x^k,u(x^k),(\nabla_{x}u)(x^k)\big)\\
&=:c_{k}(x^k)w_{\lambda_k}(x^k),
\end{split}\end{equation}
where
\begin{equation}\label{eqn-14}
  c_{k}(x):=\begin{cases}-\frac{f\left(x,u_{\lambda_k}(x),(\nabla_{x}u)(x)\right)-f\left(x,u(x),(\nabla_{x}u)(x)\right)}{u_{\lambda_{k}}(x)-u(x)},\quad \text{if}\,\,u_{\lambda_{k}}(x)\neq u(x),\\ \\ 0,\qquad \text{if}\,\,u_{\lambda_{k}}(x)=u(x).
  \end{cases}
\end{equation}

By the assumption \eqref{hypo-f} on $f$ and the asymptotic property \eqref{asymp1}, we have, for $k$ large enough, at any points $x\in\Sigma_{\lambda_{k}}\cap\Omega$ where $w_{\lambda_{k}}(x)<0$,
\begin{equation}\label{eq6}
|c_{k}(x)|\leq C\big(|u_{\lambda_k}(x)|^{\beta}+|u(x)|^{\beta}\big)\leq 2C\left[u(x)\right]^{\beta},
\end{equation}
and hence
\begin{equation}\label{eqn-17}
\liminf\limits_{\substack{x\in\Sigma_{\lambda_{k}}\cap\Omega, w_{\lambda_{k}}<0 \\ |x|\rightarrow+\infty}}|x|^{2s}c_{k}(x)\geq
-2C\limsup\limits_{\substack{x\in\Sigma_{\lambda_{k}}\cap\Omega, w_{\lambda_{k}}<0 \\ |x|\rightarrow+\infty}}|x|^{2s}\left[u(x)\right]^{\beta}>-\frac{C_{n,s,\theta}}{4}.
\end{equation}
By the \emph{Decay at infinity (I)} Theorem \ref{P:decay}, we have, there exists a $R_{0}>0$ such that
\begin{equation}\label{eqn-18}
  |x^{k}|\leq R_{0},
\end{equation}
which yields a contradiction with $|x^{k}|>-\lambda_{k}\rightarrow+\infty$ as $k\rightarrow+\infty$. This establishes \eqref{unbddstep2}.

Since $u(x)\rightarrow0$ as $|x|\rightarrow+\infty$ and $x\in\Sigma_{0}$, by choosing $R_{0}$ larger if necessary, we can actually deduce that $w_{\lambda}\not\equiv0$ in $\Sigma_{\lambda}$ for any $\lambda\leq-R_{0}$. Then, similar to \eqref{aim} in Step 1 in the proof of Theorem \ref{boundedthm}, it follows from the strong maximum principle Lemma \ref{SMP-anti} that
\begin{equation}\label{unbddstep2-p}
w_\lambda>0 \quad \text{in} \,\, \Sigma_\lambda\cap\Omega, \quad \forall \,\, \lambda\leq -R_0.
\end{equation}

\medskip

\emph{Step 2.} Let
$$\lambda_0:=\sup\{\lambda\in(-\infty,0] \mid w_\mu>0 \,\,\, \text{in} \,\, \Omega\cap\Sigma_\mu, \,\,\, \forall \,\, \mu\leq\lambda\}\in[-R_{0},0].$$

By the definition of $\lambda_0$ and the continuity of $u(x)$, we have $w_{\lambda_0}(x)\geq0$ for all $x\in \Sigma_{\lambda_0}$.

Next, we will carry out our proof by discussing two different cases.

\emph{Case (i).} There exists a line $\mathcal{L}$ parallel to $x_{1}$-axis satisfying $\mathcal{L}\cap\Omega\neq\emptyset$ such that $\mathcal{L}\cap\Omega^{c}\neq\emptyset$. In such case, we will show that
$$\lambda_0=0.$$

Now suppose on the contrary that $\lambda_{0}<0$. Note that $w_{\lambda_{0}}\not\equiv0$ in $\Sigma_{\lambda_{0}}$, then from the strong maximum principle Lemma \ref{SMP-anti}, we can derive that
\begin{equation}\label{positive-1}
  w_{\lambda_0}>0 \quad \text{in} \,\, \Omega\cap\Sigma_{\lambda_0}.
\end{equation}

Next, we will show that, there exists $\varepsilon>0$ small enough such that
\begin{equation}\label{ubddstep2}
w_\lambda\geq0 \quad \text{in} \,\, \Sigma_\lambda, \quad \forall \,\, \lambda_{0}<\lambda\leq\lambda_0+\varepsilon.
\end{equation}

Suppose \eqref{ubddstep2} is not true, then there exists a sequence $\{\lambda_{k}\}\subset(\lambda_{0},0)$ satisfying $\lambda_k\rightarrow\lambda_{0}$ as $k\rightarrow+\infty$ such that
\begin{equation}\label{eqn-11'}
 \inf\limits_{x\in\Sigma_{\lambda_{k}}\cap\Omega}w_{\lambda_{k}}(x)=\inf\limits_{\lambda \leq\lambda_k}\inf\limits_{x\in\Sigma_\lambda}w_\lambda(x)<0
\end{equation}
for every $k=1,2,\cdots$. Moreover, for every $k=1,2,\cdots$, $\inf\limits_{\Sigma_{\lambda_{k}}\cap\Omega}w_{\lambda_{k}}$ can be attained at some $x^{k}\in\Sigma_{\lambda_{k}}\cap\Omega$, that is,
\begin{equation}\label{eqn-12'}
w_{\lambda_{k}}(x^k)=\inf\limits_{\Sigma_{\lambda_{k}}\cap\Omega}w_{\lambda_{k}}=\inf\limits_{\Sigma_{\lambda_{k}}}w_{\lambda_{k}}<0.
\end{equation}

Then, similar to \eqref{contradict} and \eqref{eqn-2} in Step 1 in the proof of Theorem \ref{boundedthm}, by the assumptions \eqref{assump-f} in Theorem \ref{unboundedthm} and \eqref{fractional-MA}, we have
\begin{equation}\begin{split}\label{eqn-13'}
& \quad \mathbf{F_{s}}w_{\lambda_{k}}(x^{k})\leq\mathbf{F_{s}}u_{\lambda_k}(x^k)-\mathbf{F_{s}}u(x^k)\\
&=-f\big((x^k)^{\lambda_k},u_{\lambda_k}(x^k),(\nabla_{x}u)((x^k)^{\lambda_k})\big)
+f\big(x^k,u(x^k),(\nabla_{x}u)(x^k)\big)\\
&\leq -f\big(x^k,u_{\lambda_k}(x^k),(\nabla_{x}u)(x^k)\big)
+f\big(x^k,u(x^k),(\nabla_{x}u)(x^k)\big)\\
&=:c_{k}(x^k)w_{\lambda_k}(x^k),
\end{split}\end{equation}
where
\begin{equation}\label{eqn-14'}
c_{k}(x^{k}):=-\frac{f\left(x^{k},u_{\lambda_k}(x^{k}),(\nabla_{x}u)(x^{k})\right)-f\left(x^{k},u(x^{k}),(\nabla_{x}u)(x^{k})\right)}{u_{\lambda_{k}}(x^{k})-u(x^{k})}.
\end{equation}

Suppose that $\{x^k\}$ is not bounded, then up to a subsequence (still denote by $\{x^k\}$), $|x^{k}|\rightarrow+\infty$ as $k\rightarrow+\infty$. By the assumption \eqref{hypo-f} on $f$ and the asymptotic property \eqref{asymp1}, we have, for $k$ large enough,
\begin{equation}\label{eq6'}
|c_{k}(x^{k})|\leq C\big(|u_{\lambda_k}(x^{k})|^{\beta}+|u(x^{k})|^{\beta}\big)\leq 2C\left[u(x^{k})\right]^{\beta}.
\end{equation}
From \eqref{Decay-3} in \emph{Decay at infinity (I)} Theorem \ref{P:decay}, we infer that
\begin{align}\label{Decay-3'}
	0\geq \mathbf{F_{s}}w_{\lambda_{k}}(x^{k})-c_{k}(x^{k})w_{\lambda_{k}}(x^{k})
\geq\left[-\frac{C_{n,s,\theta}}{|x^{k}|^{2s}}-c_{k}(x^{k})\right]w_{\lambda_{k}}(x^{k}),
\end{align}
and hence
\begin{equation}\label{Decay-4'}
	-2C|x^{k}|^{2s}\left[u(x^{k})\right]^{\beta}\leq|x^{k}|^{2s}c_{k}(x^{k})\leq-C_{n,s,\theta}<0.
\end{equation}
This leads to a contradiction to the asymptotic property \eqref{asymp1}. Thus there exists $R_{\ast}>0$ such that $|x^k|<R_{\ast}$.

Indeed, due to $\lambda_{0}<0$, one can infer from \eqref{fractional-MA} that $w_{\lambda_{0}}>0$ in $\left(\Omega^{\lambda_{0}}\setminus\Omega\right)\cap\Sigma_{\lambda_{0}}$ ($A^{\lambda}$ denotes the reflection of a set $A$ w.r.t. $T_{\lambda}$), and hence \eqref{positive-1} yields that $w_{\lambda_{0}}>0$ in $\Omega^{\lambda_{0}}\cap \Sigma_{\lambda_{0}}$. So there exists a compact subset $K\subset\subset\Omega^{\lambda_{0}}\cap \Sigma_{\lambda_{0}}$ and a constant $c>0$ such that
\begin{equation}\label{221-3-ub}
  w_{\lambda_{0}}(x)\geq c>0, \qquad \forall \,\, x\in K\cap\Omega\cap B_{R_{\ast}}(0),
\end{equation}
and $(\Sigma_{\lambda_{0}}\cap\Omega\cap B_{R_{\ast}}(0))\setminus(K\cap\Omega\cap B_{R_{\ast}}(0))$ is a narrow region. Due to the continuity of $w_{\lambda}$ w.r.t. $\lambda$, we get, there exists a sufficiently small $0<\varepsilon<\min\{-\lambda_{0},\lambda_{0}+1\}$ such that, for any $\lambda\in[\lambda_{0},\lambda_{0}+\varepsilon]$,
\begin{equation}\label{221-4-ub}
  w_{\lambda}(x)>0, \quad \forall \,\, x\in K\cap\Omega\cap B_{R_{\ast}}(0),
\end{equation}
and $(\Sigma_{\lambda_{0}+\varepsilon}\cap\Omega\cap B_{R_{\ast}}(0))\setminus(K\cap\Omega\cap B_{R_{\ast}}(0))$ is also a narrow region. By \eqref{221-4-ub}, we deduce that, for $k$ large enough, $x^{k}\in(\Sigma_{\lambda_{0}+\varepsilon}\cap\Omega\cap B_{R_{\ast}}(0))\setminus(K\cap\Omega\cap B_{R_{\ast}}(0))$.

Since $f(x,u,\mathbf{p})\in\mathcal{F}$, $u$ solves \eqref{fractional-MA} and satisfies the asymptotic property \eqref{asymp1}, we have
\begin{equation}\label{eqn-15}
  c_{k}(x^k):=-\frac{f\big(x^k,u_{\lambda_k}(x^k),(\nabla_xu)(x^k)\big)-f\left(x^k,u(x^k),(\nabla_xu)(x^k)\right)}{u_{\lambda_k}(x^k)-u(x^k)}
\end{equation}
is uniformly bounded (independent of $k$ and $K$). Note that, by choosing $K$ and $k$ larger if necessary, $(\Sigma_{\lambda_{k}}\cap\Omega\cap B_{R_{\ast}}(0))\setminus(K\cap\Omega\cap B_{R_{\ast}}(0))$ is a narrow region. From \eqref{eqn-13'}, \eqref{eqn-15} and the \emph{Narrow region principle} Theorem \ref{32NRP} (see Remark \ref{rem3}), one can derive that, for $k$ sufficiently large,
\begin{equation}\label{eqn-16}
  w_{\lambda_{k}}>0 \qquad \text{in} \,\, (\Sigma_{\lambda_{k}}\cap\Omega\cap B_{R_{\ast}}(0))\setminus(K\cap\Omega\cap B_{R_{\ast}}(0)),
\end{equation}
which yields a contradiction with \eqref{eqn-12'}. Thus we have derived \eqref{ubddstep2}.

By the strong maximum principle Lemma \ref{SMP-anti}, we have either $w_{\lambda}>0$ or $w_{\lambda}\equiv0$ in $\Omega\cap\Sigma_{\lambda}$. Furthermore, since $w_{\lambda_{0}}>0$ in $\Omega\cap\Sigma_{\lambda_{0}}$, by continuity, choosing $\varepsilon>0$ smaller if necessary, we actually have
\begin{equation}\label{ubddstep2-1}
w_\lambda>0 \quad \text{in} \,\, \Omega\cap\Sigma_\lambda, \quad \forall \,\, \lambda_{0}\leq\lambda\leq\lambda_0+\varepsilon.
\end{equation}
This contradicts the definition of $\lambda_{0}$. Thus $\lambda_{0}=0$ and hence
\begin{equation}\label{eq0-1}
  u(x_1, x')\leq u(-x_1, x'), \quad \forall \,\, (x_1,x')\in \Omega, \,\, x_{1}<0.
\end{equation}
The strict monotonicity follows from $w_\lambda>0$ in $\Sigma_\lambda\cap\Omega$ for any $\lambda<\lambda_0$.

\emph{Case (ii).} Any line $\mathcal{L}$ parallel to $x_{1}$-axis such that $\mathcal{L}\cap\Omega\neq\emptyset$ must satisfy $\mathcal{L}\cap\Omega^{c}=\emptyset$. We will show that either $\lambda_{0}=0$ or $\lambda_{0}<0$ and $w_{\lambda_{0}}\equiv0$ in $\Sigma_{\lambda_{0}}$.

Assume that $\lambda_{0}<0$ but $w_{\lambda_{0}}\not\equiv0$ in $\Sigma_{\lambda_{0}}$. Then, similar to \eqref{positive-1}, we can derive that
\begin{equation}\label{positive-2}
  w_{\lambda_0}>0 \quad \text{in} \,\, \Omega\cap\Sigma_{\lambda_0}.
\end{equation}
Next, similar to \eqref{ubddstep2-1}, we can show that, there exists $\varepsilon>0$ small enough such that
\begin{equation}\label{ubddstep2-2}
w_\lambda>0 \quad \text{in} \,\, \Omega\cap\Sigma_\lambda, \quad \forall \,\, \lambda_{0}\leq\lambda\leq\lambda_0+\varepsilon.
\end{equation}
This contradicts the definition of $\lambda_{0}$. Thus we must have $\lambda_{0}=0$. This concludes the proof of Theorem \ref{unboundedthm}.
\end{proof}

\begin{rem}\label{rem6}
Typical forms of $f(x,u,\nabla u)$ which satisfies all the assumptions in Theorem \ref{unboundedthm} include: $f(x,u,\nabla u)=u^{p}(1+|\nabla u|^{2})^{\frac{\sigma}{2}}$ with $p>1$ and $\sigma\leq0$, and $f(x,u,\nabla u)=K(x)(1+|\nabla u|^{2})^{\frac{\sigma}{2}}$ with $\sigma\in\mathbb{R}$, $K(x)=K(|x_{1}|,x')$ and $K(r,x')$ is nonincreasing w.r.t. $r\in[0,+\infty)$.
\end{rem}

As immediate consequences of Theorem \ref{unboundedthm}, we have the following two corollaries below.

\begin{cor}\label{cor1}
Suppose $u\in C_{loc}^{1,1}(\mathbb{R}^{n})$ is a nonnegative solution to
\begin{equation}\label{eq13}
  -\mathbf{F_{s}}u(x)=f(u(x)) \qquad \text{in} \,\, \mathbb{R}^{n}
\end{equation}
with $0<s<1$, where $f(\cdot)$ is locally Lipschitz continuous and satisfies
\begin{equation}\label{ftiaojian2-1}
\frac{|f(u_1)-f(u_2)|}{|u_1-u_2|}\leq C(|u_1|^{\beta}+|u_2|^{\beta}) \quad \text{as}\ \ u_1, u_2\rightarrow0, \quad \text{for some} \,\, \beta>0.
\end{equation}
Moreover, assume that
\begin{equation}\label{condition}
  \limsup\limits_{|x|\rightarrow\infty}|x|^{2s}\left[u(x)\right]^{\beta}<\frac{C_{n,s,\theta}}{8C},
\end{equation}
where the constants $C_{n,s,\theta}$ and $C$ are the same as in Theorem \ref{unboundedthm}. Then $u(x)$ is symmetric and monotone decreasing (along the direction $\mathbf{e}$) w.r.t. some hyper-plane $T$ perpendicular to $\mathbf{e}$.
\end{cor}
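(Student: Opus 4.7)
The plan is to apply the moving planes argument of Theorem \ref{unboundedthm} to equation \eqref{eq13} in every direction $\nu\in S^{n-1}$, deriving hyperplane symmetry in each direction, and then to upgrade this to radial symmetry about a common center by a classical reflection-group argument. For $f=f(u)$ on $\Omega=\mathbb{R}^{n}$, the monotonicity hypothesis \eqref{assump-f} is automatic, the Lipschitz bound \eqref{ftiaojian2-1} coincides with \eqref{hypo-f}, and the decay \eqref{condition} reproduces \eqref{asymp1}; since $\mathbb{R}^{n}$ is convex in every direction and symmetric about every hyperplane, the setup of Theorem \ref{unboundedthm} applies with the $x_{1}$-axis replaced by any unit vector $\nu$, and since $\Omega^{c}=\emptyset$ one is always in Case (ii).

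If $u\equiv 0$ the statement is trivial, so I assume $u\not\equiv 0$. Fix $\nu\in S^{n-1}$ and, mimicking the notation of Theorem \ref{unboundedthm}, set $\lambda_{0}(\nu):=\sup\{\lambda:w^{\nu}_{\mu}\geq 0\text{ in }\Sigma^{\nu}_{\mu}\text{ for all }\mu\leq\lambda\}$, where $\Sigma^{\nu}_{\lambda}$ and $w^{\nu}_{\lambda}$ denote the obvious $\nu$-analogues. I would then run the three standard moving-planes steps. First, using \eqref{ftiaojian2-1}, \eqref{condition} and \emph{Decay at infinity (I)} (Theorem \ref{P:decay}), for $\lambda$ sufficiently negative one has $w^{\nu}_{\lambda}\geq 0$ in $\Sigma^{\nu}_{\lambda}$, so $\lambda_{0}(\nu)>-\infty$. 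Second, the decay $u(x)\to 0$ as $|x|\to\infty$ implicit in \eqref{condition} forces $\lambda_{0}(\nu)<+\infty$: otherwise $u$ would be monotone non-decreasing in direction $\nu$ on all of $\mathbb{R}^{n}$ and hence identically zero. Third, I would show $u(x)=u(x^{\nu,\lambda_{0}(\nu)})$ on $\Sigma^{\nu}_{\lambda_{0}(\nu)}$: if $w^{\nu}_{\lambda_{0}(\nu)}\not\equiv 0$, then by the strong maximum principle (Lemma \ref{SMP-anti}) it is strictly positive on $\Sigma^{\nu}_{\lambda_{0}(\nu)}$, and the argument of Step 2 of Theorem \ref{unboundedthm} applies verbatim: combining \emph{Decay at infinity (I)} (to confine any negative minimum of $w^{\nu}_{\lambda_{0}(\nu)+\varepsilon}$ to a bounded ball $B_{R_{*}}$) with the \emph{Narrow region principle} (Theorem \ref{2NRP}, applied on the narrow slab $\{\lambda_{0}(\nu)<x\cdot\nu<\lambda_{0}(\nu)+\varepsilon\}\cap B_{R_{*}}$ minus a compact subset on which $w^{\nu}_{\lambda_{0}(\nu)}$ is uniformly bounded below) lets one push the plane past $\lambda_{0}(\nu)$, contradicting maximality.

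To upgrade hyperplane symmetry in every direction to radial symmetry about a single center, first apply the above with $\nu=e_{i}$ for $i=1,\ldots,n$: the $n$ resulting mutually orthogonal symmetry hyperplanes meet in the unique point $x_{\ast}:=(\lambda_{0}(e_{1}),\ldots,\lambda_{0}(e_{n}))$. For any other $\nu$, the hyperplane $T^{\nu}_{\lambda_{0}(\nu)}$ must also pass through $x_{\ast}$: otherwise, composing reflection about $T^{\nu}_{\lambda_{0}(\nu)}$ with a suitable product of the coordinate reflections about $\{x_{i}=\lambda_{0}(e_{i})\}$ would produce a $u$-preserving isometry with a non-trivial translational component, violating $u(x)\to 0$ at infinity. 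Therefore $u$ is invariant under every reflection about a hyperplane through $x_{\ast}$, and hence radial about $x_{\ast}$; strict monotonicity in $|x-x_{\ast}|$ follows from the strict inequality $w^{\nu}_{\lambda}>0$ in $\Sigma^{\nu}_{\lambda}$ for each $\lambda<\lambda_{0}(\nu)$ and each direction $\nu$.

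The main obstacle is the ``push the plane beyond $\lambda_{0}(\nu)$'' step, which requires the careful pairing of \emph{Decay at infinity} (for global confinement of a potential negative minimum) with the \emph{Narrow region principle} (for local control in the added slab), exactly as orchestrated in Step 2 of Theorem \ref{unboundedthm}; in particular one must track how the decay assumption \eqref{condition} controls the linearization coefficient $c(x)=-[f(u_{\lambda}(x))-f(u(x))]/w_{\lambda}(x)$ at infinity via \eqref{ftiaojian2-1}. By comparison, the final upgrade from hyperplane to radial symmetry via the reflection group is classical and presents only notational difficulty.
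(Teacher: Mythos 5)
Your proposal is correct and follows essentially the same route as the paper, which disposes of Corollary \ref{cor1} by observing that all hypotheses of Theorem \ref{unboundedthm} hold in every direction (since $f=f(u)$ satisfies \eqref{assump-f} automatically, \eqref{ftiaojian2-1} is \eqref{hypo-f}, and \eqref{condition} gives \eqref{asymp1}), so that the moving-plane argument yields a symmetry hyperplane in each direction. Your additional details --- finiteness of $\lambda_{0}(\nu)$ from the decay of $u$, the strong maximum principle at the critical position, and the reflection-group argument pinning all symmetry hyperplanes through a common center $x_{\ast}$ --- are exactly the standard steps the paper leaves implicit.
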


One can easily verify the validity of Corollary \ref{cor1}, since all the assumptions in Theorem \ref{unboundedthm} are fulfilled and $f(u)$ satisfies \eqref{assump-f} in the direction $\mathbf{e}\in\mathbb{R}^{n}$.

\begin{rem}\label{rem7}
A typical type of nonlinearity which satisfies all the assumptions in Corollary \ref{cor1} is $f(u)=u^{p}$ with $p>1$.
\end{rem}

Another typical example is the so-called infinite cylinder $\mathcal{C}:=(-\infty,+\infty)\times D'$, where $D'\subset\mathbb{R}^{n-1}$ is a bounded domain.
\begin{cor}\label{cor2}
Suppose $u\in C^{1,1}_{loc}(\mathcal{C})\cap \mathcal{L}_{s}(\mathbb{R}^{n})\cap C(\overline{\mathcal{C}})$ solves \eqref{fractional-MA} for $\Omega=\mathcal{C}$, where $s\in(0,1)$ and $f(x, u, \mathbf{p})\in\mathcal{F}$ satisfies \eqref{hypo-f}. If $f(x, u, \mathbf{p})$ satisfies
\begin{equation}\label{assumption-f}
f(x_1, x', u, p_1, p_2,\cdots, p_n)\leq f(\bar{x}_1, x', u, -p_1, p_2,\cdots, p_n), \quad\, \forall \,\,\bar{x}_1\geq x_1, \, p_1\geq0,
\end{equation}
and $u(x)$ satisfies the asymptotic property
\begin{equation}\label{asymp-p}
  \limsup\limits_{\substack{x_{1}\rightarrow-\infty, \\ x\in\mathcal{C}}}|x|^{2s}\left[u(x)\right]^{\beta}<\frac{C_{n,s,\theta}}{8C},
\end{equation}
where the constants $C_{n,s,\theta}$ and $C$ are the same as in Theorem \ref{unboundedthm}. Then either there exists $\mu_{0}\in\mathbb{R}$ such that $u(x)$ is monotone increasing in $\mathcal{C}\cap\left\{x_{1}<\mu_{0}\right\}$ in $x_{1}$-direction and $u(x_{1},x')=u(2\mu_{0}-x_{1},x')$, or $u(x)$ is monotone increasing in $\mathcal{C}$ in $x_{1}$-direction.
\end{cor}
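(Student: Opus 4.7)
The plan is to run the method of moving planes in the $x_1$-direction inside the cylinder $\mathcal{C}$, exploiting the fact that $\mathcal{C}$ is invariant under translations in $x_1$ so that there is no distinguished symmetry plane to center the procedure at (in contrast with Theorem \ref{boundedthm} and Case (i) of Theorem \ref{unboundedthm}). Adopt the notations $T_\lambda:=\{x_1=\lambda\}$, $\Sigma_\lambda:=\{x_1<\lambda\}$, $x^\lambda:=(2\lambda-x_1,x')$, $u_\lambda(x):=u(x^\lambda)$ and $w_\lambda:=u_\lambda-u$ from Theorems \ref{boundedthm} and \ref{unboundedthm}. The monotonicity hypothesis \eqref{assumption-f} on $f$ implies, at any point $x\in\Sigma_\lambda\cap\mathcal{C}$ where $w_\lambda(x)<0$,
\[
\mathrm{F}_s w_\lambda(x)-c_\lambda(x)w_\lambda(x)\leq 0,
\]
with $c_\lambda$ of the same form as in \eqref{eqn-2} and \eqref{eqn-14}. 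Since $D'$ is bounded, on $\Sigma_\lambda\cap\mathcal{C}$ the condition $|x|\to\infty$ is equivalent to $x_1\to-\infty$, so \eqref{asymp-p} furnishes the asymptotic hypothesis of \emph{Decay at infinity} (Theorem \ref{P:decay}). Combined with the bound $|c_\lambda(x)|\leq 2C[u(x)]^\beta$ produced by \eqref{hypo-f}, Theorem \ref{P:decay} delivers some $R_0>0$ such that every negative minimum of $w_\lambda$ in $\Sigma_\lambda\cap\mathcal{C}$ must lie in $B_{R_0}(0)$; taking $\lambda\leq-R_0$ rules such a minimum out, giving the starting point $w_\lambda\geq 0$ in $\Sigma_\lambda\cap\mathcal{C}$ for all $\lambda$ sufficiently negative.

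Next define
\[
\mu_0:=\sup\set{\lambda\in\mathbb{R}\,:\, w_\mu\geq 0\text{ in }\Sigma_\mu\cap\mathcal{C}\text{ for every }\mu\leq\lambda}\in(-\infty,+\infty].
\]
If $\mu_0=+\infty$, then $u$ is non-decreasing in $x_1$ throughout $\mathcal{C}$; moreover, if $u(a,x')=u(b,x')$ for some $a<b$ and $x'\in D'$, choosing $\lambda=(a+b)/2$ makes $w_\lambda$ attain an interior zero minimum, and Lemma \ref{SMP-anti} forces $w_\lambda\equiv 0$ a.e. in $\mathbb{R}^n$, which is incompatible with $\mu_0=+\infty$ — placing us in the second alternative of the corollary. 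If instead $\mu_0<+\infty$, continuity yields $w_{\mu_0}\geq 0$ in $\Sigma_{\mu_0}\cap\mathcal{C}$, and Lemma \ref{SMP-anti} gives the dichotomy: either $w_{\mu_0}\equiv 0$ a.e. in $\mathbb{R}^n$, in which case $u$ is symmetric about $T_{\mu_0}$ and the strict monotonicity on $\mathcal{C}\cap\{x_1<\mu_0\}$ follows by applying Lemma \ref{SMP-anti} to each $w_\lambda$ with $\lambda<\mu_0$; or $w_{\mu_0}>0$ strictly in $\Sigma_{\mu_0}\cap\mathcal{C}$, which I rule out in the last step.

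The main obstacle is this last subcase, because $\mathcal{C}$ is unbounded in $x_1$ and one cannot close everything off on a single compact set. The argument mirrors Step 2 of Case (i) in the proof of Theorem \ref{unboundedthm}, adapted to the cylindrical geometry by splitting the continuation into a compact slice plus a far-field tail. Concretely, I first pick $R_*>0$ large enough that \eqref{asymp-p} together with Theorem \ref{P:decay} forbids negative minima of $w_\lambda$ outside $B_{R_*}(0)$ uniformly for $\lambda$ in a small right-neighborhood of $\mu_0$. Inside the bounded slice $\Sigma_{\mu_0}\cap\mathcal{C}\cap B_{R_*}(0)$ I select a compact set $K$ on which $w_{\mu_0}\geq c>0$ and whose complement is a narrow region adjacent to $T_{\mu_0}$ (together with a neighborhood of $\partial\mathcal{C}\cap B_{R_*}(0)$, where $w_{\mu_0}\geq 0$ by the exterior condition in \eqref{fractional-MA}). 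By continuity, $w_{\mu_0+\varepsilon}>0$ on $K$ for all sufficiently small $\varepsilon>0$, while the \emph{Narrow region principle} (Theorem \ref{2NRP}, or its sharper unbounded version Theorem \ref{NRP-Anti-ubdd}) forces $w_{\mu_0+\varepsilon}\geq 0$ on the narrow complement. Combining with the far-field control from Theorem \ref{P:decay}, one obtains $w_{\mu_0+\varepsilon}\geq 0$ throughout $\Sigma_{\mu_0+\varepsilon}\cap\mathcal{C}$, contradicting the definition of $\mu_0$. This leaves only the first alternative, with $u$ symmetric about $T_{\mu_0}$ and strictly monotone on its left half.
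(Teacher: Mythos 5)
Your overall strategy (moving planes in $x_{1}$ over all of $\mathbb{R}$, decay at infinity to start, compact set plus narrow region to continue, strong maximum principle for the dichotomy) is the paper's strategy — indeed the paper simply invokes Theorem \ref{unboundedthm}, Case (ii), applied to the $x_{1}$-translates of $u$, and you are re-running that proof in the cylinder. The genuine gap is in your continuation step. Because $\mathcal{C}$ is invariant under reflection about every $T_{\lambda}$, both $u$ and $u_{\mu_{0}}$ vanish on the lateral boundary $\partial\mathcal{C}$, so $w_{\mu_{0}}=0$ on all of $\partial\mathcal{C}\cap\Sigma_{\mu_{0}}$. Hence any compact $K$ with $w_{\mu_{0}}\geq c>0$ must stay a fixed distance $\delta$ from $\partial\mathcal{C}$, and the complement of $K$ inside $\Sigma_{\mu_{0}+\varepsilon}\cap\mathcal{C}\cap B_{R_{*}}(0)$ necessarily contains a shell $\{x\in\mathcal{C}:\,\mathrm{dist}(x,\partial\mathcal{C})<\delta\}$ whose extent in the $x_{1}$-direction is of order $R_{*}$. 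Theorems \ref{2NRP} and \ref{NRP-Anti-ubdd} require the region to lie between $T$ and a nearby parallel plane with $d(\Omega)^{2s}\bigl(-\inf c\bigr)$ below a threshold; that hypothesis fails for this shell, so the principles you cite do not force $w_{\mu_{0}+\varepsilon}\geq0$ there, and remarking that $w_{\mu_{0}}\geq0$ near $\partial\mathcal{C}$ does not help, since continuity only transfers positivity from sets where $w_{\mu_{0}}$ has a positive lower bound. To close this step you must either use the non-directional narrow region principle, Theorem \ref{32NRP}, whose narrowness is measured by distance to the complement (this is exactly what the paper does in Step 2 of Theorems \ref{boundedthm} and \ref{unboundedthm}), or observe that near $\partial\mathcal{C}$ both $u$ and $u_{\lambda}$ are small, so by \eqref{hypo-f} the coefficient $c_{\lambda}$ is small there and the product (width)$^{2s}\cdot(-\inf c)$ at negative minimum points in the shell stays below the threshold. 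Neither ingredient appears in your write-up.

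Two smaller points. First, the assertion that $\mathrm{F}_{s}w_{\lambda}-c_{\lambda}w_{\lambda}\leq0$ at \emph{every} point where $w_{\lambda}<0$ is not justified when $f$ depends on $\nabla u$: \eqref{assumption-f} can only be invoked where $\nabla u_{\lambda}=\nabla u$ (interior minima, as in \eqref{nabla}) \emph{and} $\partial_{x_{1}}u\geq0$ at that point; the paper obtains the sign from the monotone-in-$\lambda$ choice of the minimizing sequence, and in your dichotomy and strictness steps you should note that $\partial_{x_{1}}u\geq0$ on $\{x_{1}<\mu_{0}\}$ follows from the monotonicity already established. Second, ruling out $w_{\lambda}\equiv0$ for $\lambda<\mu_{0}$ (and in your $\mu_{0}=+\infty$ branch) deserves a sentence: symmetry about two distinct planes makes $u$ periodic in $x_{1}$, which is incompatible with \eqref{asymp-p} and $u>0$ in $\mathcal{C}$; it is this, not the definition of $\mu_{0}$ alone, that yields the contradiction.
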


\begin{rem}\label{rem10}
Typical forms of $f(x,u,\nabla u)$ which satisfies all the assumptions in Corollary \ref{cor2} include: $f(x,u,\nabla u)=u^{p}(1+|\nabla u|^{2})^{\frac{\sigma}{2}}$ with $p>1$ and $\sigma\leq0$, and $f(x,u,\nabla u)=K(x)(1+|\nabla u|^{2})^{\frac{\sigma}{2}}$ with $\sigma\in\mathbb{R}$ and $K(x)$ nondecreasing w.r.t. $x_{1}$.
\end{rem}

\subsection{Coercive epigraph $\Omega$}
Assume without loss of generalities that $\mathbf{e}=\mathbf{e_n}:=(0,\cdots,0,1)$. A domain $\Omega \subseteq \mathbb{R}^n$ is a coercive epigraph if there exists a continuous function $\varphi : \mathbb{R}^{n-1}\rightarrow \mathbb{R}$ satisfying
\begin{equation}\label{M-1}
\lim\limits_{|x'|\rightarrow +\infty}\varphi(x')=+\infty,
\end{equation}
such that $\Omega=\{x=(x',x_{n})\in\mathbb{R}^{n} \mid x_n>\varphi(x')\}$, where $x':=(x_{1},\cdots,x_{n-1})\in\mathbb{R}^{n-1}$.

In this setting, we can prove the following monotonicity result via the method of moving planes for $\mathbf{F_{s}}$.
\begin{thm}\label{Mono-eg}
Let $\Omega$ be a coercive epigraph, and let $u\in\mathcal{L}_{s}(\mathbb{R}^{n})\cap C^{1,1}_{loc}(\Omega)\cap C(\overline{\Omega})$ be a solution to \eqref{fractional-MA} with $s\in(0,1)$. Suppose that $f(x, u, \mathbf{p})\in\mathcal{F}$ and satisfies
\begin{equation}\begin{cases}\label{assump-f-ce}
f(x', x_{n}, u, p_1, p_2,\cdots, p_n)\leq f(x', \bar{x}_{n}, u, p_1, p_2,\cdots, -p_n),\\
\ \ \ \ \ \forall \,\,x_n\geq\min\limits_{\mathbb{R}^{n-1}}\varphi, \,\, p_n\geq0, \,\, \bar{x}_{n}\geq x_{n}.
\end{cases}\end{equation}
Then $u$ is strictly monotone increasing in $x_{n}$.
\end{thm}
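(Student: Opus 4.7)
I will apply the method of moving planes in the $x_n$-direction, starting from far below $\Omega$ and sliding upward, exploiting the key geometric feature of a coercive epigraph: the set $D_\lambda := \Omega \cap \{x_n < \lambda\}$ is \emph{bounded} for every finite $\lambda$, by coercivity of $\varphi$. For each $\lambda \in \mathbb{R}$, set $T_\lambda := \{x_n = \lambda\}$, $\Sigma_\lambda := \{x_n < \lambda\}$, $x^\lambda := (x', 2\lambda - x_n)$, and $w_\lambda(x) := u(x^\lambda) - u(x)$, which is anti-symmetric about $T_\lambda$. Strict monotonicity in $x_n$ is equivalent to showing $w_\lambda > 0$ on $D_\lambda$ for every $\lambda \in \mathbb{R}$. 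Outside $\Omega$ the bound $w_\lambda \geq 0$ is automatic, because $u \equiv 0$ there while $u \geq 0$ everywhere, so any negative minimum of $w_\lambda$ on $\Sigma_\lambda$ must lie in the bounded set $D_\lambda$; moreover $D_\lambda = \emptyset$ for $\lambda \leq \underline{\lambda} := \inf_{x'} \varphi(x')$.

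\textbf{Starting the planes.} For $\lambda \in (\underline{\lambda}, \underline{\lambda} + \epsilon_0]$ with $\epsilon_0$ small, $D_\lambda$ lies in the horizontal slab $\{\underline{\lambda} < x_n < \lambda\}$, which is narrow with respect to $T_\lambda$. At any negative interior minimum $y \in D_\lambda$ of $w_\lambda$, both $y$ and $y^\lambda$ belong to $\Omega$, while $\nabla w_\lambda(y) = 0$ forces $(\partial_{x'} u)(y^\lambda) = (\partial_{x'} u)(y)$ and $(\partial_{x_n} u)(y^\lambda) = -(\partial_{x_n} u)(y)$; in particular one of these $x_n$-derivatives is non-negative. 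Combining the superadditivity from Lemma \ref{lem0}(b), the equation \eqref{fractional-MA}, the structural assumption \eqref{assump-f-ce} (applied at whichever of $y, y^\lambda$ carries the non-negative $x_n$-derivative, exactly as in the proofs of Theorems \ref{boundedthm} and \ref{unboundedthm}), and the Lipschitz regularity of $f$ in $u$ granted by $f \in \mathcal{F}$, one arrives at
\[
\mathrm{F}_{s} w_\lambda(y) - c(y)\, w_\lambda(y) \leq 0
\]
with $c(y)$ uniformly bounded. The Narrow Region Principle (Theorem \ref{2NRP}, in the weak form noted in Remarks \ref{rem13} and \ref{rem14}) then delivers $w_\lambda \geq 0$ on $D_\lambda$ for all $\lambda - \underline{\lambda}$ sufficiently small.

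\textbf{Continuation and conclusion.} Set $\lambda_0 := \sup\{\lambda : w_\mu \geq 0 \text{ on } \Sigma_\mu \text{ for every } \mu \leq \lambda\}$, so that the previous step gives $\lambda_0 > \underline{\lambda}$. I claim $\lambda_0 = +\infty$. If not, continuity gives $w_{\lambda_0} \geq 0$ on $\Sigma_{\lambda_0}$; since $u$ is positive on $\Omega$ and zero outside, $w_{\lambda_0} \not\equiv 0$, so Lemma \ref{SMP-anti} upgrades this to $w_{\lambda_0} > 0$ throughout the bounded open set $D_{\lambda_0}$. Choose a compact $K \subset D_{\lambda_0}$ with $w_{\lambda_0} \geq c > 0$ on $K$, shaped to follow $\partial D_{\lambda_0}$ so that the residue $D_{\lambda_0+\epsilon} \setminus K$ decomposes -- thanks to boundedness of $D_{\lambda_0+\epsilon}$ from the coercivity of $\varphi$ -- into a narrow strip of $x_n$-thickness $O(\epsilon + \delta)$ near $T_{\lambda_0+\epsilon}$ plus a thin tubular neighbourhood of the graph $\partial\Omega$. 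Continuity gives $w_\lambda \geq c/2$ on $K$ for all $\lambda \in [\lambda_0, \lambda_0 + \epsilon]$ when $\epsilon$ is small, and reapplying the comparison of the previous step on the residue -- exactly the structure of Step~2 in the proof of Theorem \ref{boundedthm} -- then yields $w_\lambda \geq 0$ on $\Sigma_\lambda$ for those $\lambda$, contradicting the definition of $\lambda_0$. Once $w_\lambda \geq 0$ is established for all $\lambda \in \mathbb{R}$, Lemma \ref{SMP-anti} again rules out $w_\lambda \equiv 0$ (since $u \not\equiv 0$) and delivers $w_\lambda > 0$ on $D_\lambda$; taking $\lambda := x_n + h$ for fixed $x \in \Omega$ and $h > 0$ gives $u(x', x_n + 2h) > u(x', x_n)$, so $u$ is strictly increasing in $x_n$.

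\textbf{Main obstacle.} The delicate point is the continuation step: the portion of the residue $D_{\lambda_0+\epsilon} \setminus K$ hugging the slant boundary $\partial\Omega$ is not narrow in the $x_n$-direction in any naive sense. Resolving this requires shaping $K$ so that only a genuinely narrow tubular residue along $\partial\Omega$ remains, and exploiting that $u$ vanishes on $\partial\Omega$ -- so both $u$ and $w_\lambda$ are small in that tube, making the effective zero-order coefficient controllable -- in an argument that mirrors the treatment near the boundary in the proofs of Theorems \ref{boundedthm} and \ref{unboundedthm}.
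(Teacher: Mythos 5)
Your overall strategy (moving planes in the $x_n$-direction, using coercivity to make $\Sigma_\lambda\cap\Omega$ bounded, a narrow-region start, then sliding to $\lambda_0=+\infty$) is the same as the paper's, but there is a genuine gap at the heart of the comparison step. To obtain $\mathrm{F}_s w_\lambda(y)\le c(y)\,w_\lambda(y)$ at a negative minimum $y\in\Sigma_\lambda\cap\Omega$ you must bound $-f\bigl(y^\lambda,u_\lambda(y),(\nabla u)(y^\lambda)\bigr)$ by $-f\bigl(y,u_\lambda(y),(\nabla u)(y)\bigr)$, i.e.\ you need
\[
f\bigl(y',y_n,u_\lambda(y),p',p_n\bigr)\;\le\; f\bigl(y',2\lambda-y_n,u_\lambda(y),p',-p_n\bigr),\qquad p_n:=(\partial_{x_n}u)(y),
\]
and hypothesis \eqref{assump-f-ce} yields this only when $p_n\ge0$ \emph{at the lower point} $y$. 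The hypothesis is one-sided: if instead $(\partial_{x_n}u)(y)<0$ (so that it is $y^\lambda$ that ``carries the non-negative derivative''), applying \eqref{assump-f-ce} with $q_n=-p_n>0$ gives $f(y',y_n,\cdot,p',-p_n)\le f(y',\bar{x}_n,\cdot,p',p_n)$, which is not the inequality required, and applying it ``at $y^\lambda$'' only compares $y^\lambda$ with points above $y^\lambda$, never with $y$. So your claim that \eqref{assump-f-ce} can be ``applied at whichever of $y,y^\lambda$ carries the non-negative $x_n$-derivative'' is false, and for a fixed $\lambda$ nothing at a negative minimum forces $(\partial_{x_n}u)(y)\ge0$. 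The paper closes exactly this gap with a device you omit: in both Step 1 and the continuation step it chooses the planes $\lambda_k$ (resp.\ $\hat\lambda$) so that $\inf_{\Sigma_{\lambda_k}\cap\Omega}w_{\lambda_k}=\inf_{\lambda\in(0,\lambda_k]}\inf_{\Sigma_\lambda}w_\lambda$ (see \eqref{eqn-0ce}--\eqref{eqn-1ce}); then $\partial_\lambda w_\lambda\le0$ at $(\lambda_k,x^k)$, i.e.\ $(\partial_{x_n}u)((x^k)^{\lambda_k})\le0$, which together with $\nabla w_{\lambda_k}(x^k)=0$ gives precisely $(\partial_{x_n}u)(x^k)\ge0$, making \eqref{assump-f-ce} applicable in \eqref{contradict-ce}. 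Without this (or a substitute), neither your starting step nor your continuation step goes through.

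Secondly, the ``main obstacle'' you flag at the slant boundary in the continuation step is a misdiagnosis in the epigraph setting, and in any case you leave it unresolved. Because $\Omega$ is an epigraph, the reflection across $T_{\lambda_0}$ of any point of $\overline{\Omega}\cap\Sigma_{\lambda_0}$ lies strictly inside $\Omega$, so $w_{\lambda_0}=u_{\lambda_0}-u>0$ up to and including $\partial\Omega\cap\Sigma_{\lambda_0}$; hence $w_{\lambda_0}\ge c_0>0$ on the compact set $\overline{\Sigma_{\lambda_0-\varepsilon_1}}\cap\overline{\Omega}$, and no specially shaped $K$ or tubular neighbourhood of $\partial\Omega$ is needed. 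The residue is simply the slab $\left(\Sigma_{\lambda}\setminus\overline{\Sigma_{\lambda_0-\varepsilon_1}}\right)\cap\Omega$, which is bounded by coercivity and lies between the parallel planes $T_{\lambda_0-\varepsilon_1}$ and $T_{\lambda}$, so the narrow region principle Theorem \ref{2NRP} applies directly --- this is exactly what the paper does. (It is in the bounded-domain Theorem \ref{boundedthm}, not here, that the region hugging $\partial\Omega$ causes trouble and the paper switches to the other narrow region principle, Theorem \ref{32NRP}.) As written, your proposal both mislocates this difficulty and leaves its announced fix as a sketch, so the continuation step is incomplete even apart from the gradient-sign gap.
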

\begin{proof}
	Without loss of generality, we assume $$\inf\limits_{x\in\Omega}x_n=\min\limits_{\mathbb{R}^{n-1}}\varphi=0.$$
	For arbitrary $\lambda>0$, let
	$$
	T_{\lambda}:=\left\{x \in \mathbb{R}^{n} | x_{n}=\lambda\right\}
	$$
	be the moving planes,
	\begin{equation}
	\Sigma_{\lambda}:=\left\{x \in \mathbb{R}^{n} | x_{n}<\lambda\right\}
	\end{equation}
	be the region below the plane, and
	$$
	x^{\lambda}:=\left(x_{1}, x_{2}, \ldots, 2 \lambda-x_{n}\right)
	$$
	be the reflection of $x$ about the plane $T_{\lambda}$.
	
	Assume that $u$ is a solution to problem \eqref{fractional-MA}. To compare the values of $u(x)$ with $u_{\lambda}(x):=u\left(x^{\lambda}\right)$, we denote
	$$
	w_{\lambda}(x):=u_{\lambda}(x)-u(x).
	$$
	Since $\Omega$ is a coercive epigraph, $\Sigma_{\lambda}\cap \Omega$ is always bounded for every $\lambda>0$. One can easily obtain that, for any $\lambda>0$,
\begin{equation}\label{M-5}
	w_{\lambda}(x)\geq 0, \quad w_{\lambda}(x)\not\equiv0 \qquad \text{in}\,\, \Sigma_{\lambda}\setminus \Omega.
\end{equation}
We aim at proving that $w_{\lambda}>0$ in $\Sigma_{\lambda}\cap\Omega$ for every $\lambda>0$, which gives the desired strict monotonicity.

\medskip
	
We will carry out the method of moving planes in two steps.

\medskip
	
\emph{Step 1.} We will first show that, for $\lambda>0$ sufficiently close to $0$,
\begin{eqnarray}\label{M-6}
	w_{\lambda}\geq0 \qquad \text{in} \,\, \Sigma_{\lambda}\cap\Omega.
\end{eqnarray}

Suppose \eqref{M-6} does not hold, then there exists a sequence $\{\lambda_{k}\}$ satisfying $\lambda_{k}>0$ and $\lambda_k\rightarrow0$ as $k\rightarrow+\infty$ such that
\begin{equation}\label{eqn-0ce}
  \inf\limits_{\Sigma_{\lambda_{k}}\cap\Omega}w_{\lambda_{k}}=\inf\limits_{\lambda\in(0,\lambda_{k}]}\inf\limits_{x\in\Sigma_\lambda}w_\lambda(x)<0.
\end{equation}
Consequently, there exists $x^k\in\Sigma_{\lambda_{k}}\cap\Omega$ such that
\begin{equation}\label{eqn-1ce}
  w_{\lambda_{k}}(x^{k})=\inf\limits_{\Sigma_{\lambda_{k}}\cap\Omega}w_{\lambda_{k}}=\inf\limits_{\Sigma_{\lambda_{k}}}w_{\lambda_{k}}<0.
\end{equation}
It follows directly from \eqref{eqn-0ce} and \eqref{eqn-1ce} that $\frac{\partial w_{\lambda}}{\partial\lambda}|_{\lambda=\lambda_k}(x^k)\leq0$, and hence $(\partial_{x_{n}}u)[(x^k)^{\lambda_k}]\leq0$. Note that $x^k$ is the interior minimum of $w_{\lambda_k}(x)$, then one has $\nabla_{x}w_{\lambda_k}(x^k)=0$, i.e.,
\begin{equation}\label{nabla-ce}
(\nabla_{x}u_{\lambda_k})(x^k)=(\nabla_{x}u)(x^k).
\end{equation}
By the assumption \eqref{assump-f-ce} in Theorem \ref{Mono-eg} and \eqref{fractional-MA}, we have
\begin{equation}\begin{split}\label{contradict-ce}
& \quad \mathbf{F_{s}}w_{\lambda_{k}}(x^{k})\leq\mathbf{F_{s}}u_{\lambda_k}(x^k)-\mathbf{F_{s}}u(x^k)\\
&=-f\big((x^k)^{\lambda_k},u_{\lambda_k}(x^k),(\nabla_{x}u)((x^k)^{\lambda_k})\big)
+f\big(x^k,u(x^k),(\nabla_{x}u)(x^k)\big)\\
&\leq -f\big(x^k,u_{\lambda_k}(x^k),(\nabla_{x}u)(x^k)\big)
+f\big(x^k,u(x^k),(\nabla_{x}u)(x^k)\big)\\
&=:c(x^k)w_{\lambda_k}(x^k),
\end{split}\end{equation}
where
\begin{equation}\label{eqn-2ce}
  c(x^k):=-\frac{f\left(x^k,u_{\lambda_k}(x^k),(\nabla_xu)(x^k)\right)-f\left(x^k,u(x^k),(\nabla_xu)(x^k)\right)}{u_{\lambda_k}(x^k)-u(x^k)}
\end{equation}
is uniformly bounded independent of $k$, since $f(x, u, \mathbf{p})\in\mathcal{F}$ and $u\in L^{\infty}_{loc}(\mathbb{R}^{n})$.

Note that $\Sigma_{\lambda_{k}}\cap\Omega$ is a narrow region for $k$ large enough. From \eqref{contradict-ce}, \eqref{eqn-2ce} and the \emph{Narrow region principle} Theorem \ref{2NRP} (see Remark \ref{rem13} and \ref{rem14}), one can derive that, for $k$ sufficiently large,
\begin{equation}\label{eqn-3ce}
  w_{\lambda_{k}}>0 \qquad \text{in} \,\, \Sigma_{\lambda_{k}}\cap\Omega,
\end{equation}
which yields a contradiction with \eqref{eqn-0ce}. Hence there exists an $\epsilon>0$ small enough such that, \eqref{M-6} holds for any $0<\lambda\leq\epsilon$. Furthermore, it follows from \eqref{M-5} and the strong maximum principle Lemma \ref{SMP-anti} that, for any $0<\lambda\leq\epsilon$,
\begin{equation}\label{aim-ce}
w_\lambda(x)>0, \qquad \forall \,\, x\in\Sigma_\lambda\cap\Omega.
\end{equation}

\medskip
	
\emph{Step 2.} Inequality \eqref{aim-ce} provides a starting point for us to carry out the moving planes procedure. Now we increase $\lambda$ from close to $0$ to $+\infty$ as long as inequality \eqref{aim-ce} holds until its limiting position. Define
	\begin{equation}\label{M-7}
	\lambda_{0}:=\sup \left\{\lambda>0 \mid w_{\mu}> 0 \,\, \text{in} \,\, x \in\Sigma_{\mu}\cap\Omega, \,\, \forall \, 0<\mu<\lambda\right\}.
	\end{equation}
	We aim to prove that
	$$
	\lambda_{0}=+\infty.
	$$
	
Otherwise, suppose on the contrary that $0<\lambda_{0}<+\infty$, we will show that the plane $T_{\lambda_0}$ can be moved upward a little bit more, that is, there exists an $\varepsilon>0$ small enough such that
	\begin{eqnarray}\label{M-8}
	w_{\lambda}>0  \quad \text{in} \,\,\Sigma_{\lambda}\cap\Omega, \qquad \forall \,\, \lambda_{0}\leq\lambda\leq\lambda_{0}+\varepsilon,
	\end{eqnarray}
which contradicts the definition \eqref{M-7} of $\lambda_{0}$.
	
First, by the definition of $\lambda_{0}$, we have $w_{\lambda_{0}}\geq 0$ in $\Sigma_{\lambda_0}\cap\Omega$. Since $u>0$ in $\Omega$ and $u\equiv 0$ in $\mathbb{R}^n\setminus\Omega$, we have $w_{\lambda_{0}}(x)>0$ for any $x\in\Omega^{\lambda_{0}}\setminus\Omega$, where the notation $A^{\lambda}$ denotes the reflection of a given set $A$ w.r.t. the plane $T_{\lambda}$. Then, we can obtain from the strong maximum principle Lemma \ref{SMP-anti} that
\begin{equation}\label{221-2-ce}
  w_{\lambda_{0}}(x)>0, \qquad \forall \,\, x\in\Sigma_{\lambda_{0}}\cap\Omega.
\end{equation}	

Next, we choose $\varepsilon_1>0$ sufficiently small such that $\left(\Sigma_{\lambda_{0}+\varepsilon_1}\setminus\overline{\Sigma_{\lambda_{0}-\varepsilon_1}}\right)\cap \Omega$ is a bounded narrow region. By the fact that $w_{\lambda_{0}}>0$ in $\Omega^{\lambda_{0}}\cap\Sigma_{\lambda_{0}}$ and the continuity of $w_{\lambda_{0}}$, there exists $c_0>0$ such that
	$$w_{\lambda_{0}}(x)> c_0,\qquad \forall \,\, x\in\overline{\Sigma_{\lambda_0-\varepsilon_1}}\cap \Omega.$$
	Therefore, we can choose $0<\varepsilon_2<\varepsilon_1$ sufficiently small such that
	\begin{equation}\label{M-9}
	w_{\lambda}(x)>\frac{c_0}{2}>0,\qquad \forall \,\, x\in\overline{\Sigma_{\lambda_0-\varepsilon_1}}\cap \Omega,
	\end{equation}
for every $\lambda_0\leq\lambda\leq\lambda_0+\varepsilon_2$. For any $\lambda\in[\lambda_{0},\lambda_{0}+\varepsilon_{2}]$, since $\left(\Sigma_{\lambda}\setminus\overline{\Sigma_{\lambda_{0}-\varepsilon_1}}\right)\cap \Omega$ is a bounded narrow region, we will deduce from the \emph{Narrow region principle} Theorem \ref{2NRP} that
	\begin{equation}\label{eqn-5-ce}
		w_{\lambda}\geq0  \quad \text{in} \,\,\left(\Sigma_{\lambda}\setminus\overline{\Sigma_{\lambda_{0}-\varepsilon_1}}\right)\cap \Omega.
	\end{equation}

Indeed, by \eqref{M-9}, if we suppose \eqref{eqn-5-ce} does not hold, then there exists a $\hat{\lambda}\in(\lambda_{0},\lambda_{0}+\varepsilon_{2}]$ (depending on $\varepsilon_{2}$) such that
\begin{equation}\label{eqn-6-ce}
  \inf\limits_{\left(\Sigma_{\hat{\lambda}}\setminus\overline{\Sigma_{\lambda_{0}-\varepsilon_1}}\right)\cap\Omega}w_{\hat{\lambda}}
  =\inf\limits_{\Sigma_{\hat{\lambda}}\cap\Omega}w_{\hat{\lambda}}
  =\inf\limits_{\lambda\in(\lambda_{0},\hat{\lambda}]}\inf\limits_{x\in\Sigma_{\lambda}}w_\lambda(x)<0.
\end{equation}
Consequently, there exists $\bar{x}\in\left(\Sigma_{\hat{\lambda}}\setminus\overline{\Sigma_{\lambda_{0}-\varepsilon_1}}\right)\cap\Omega$ such that
\begin{equation}\label{eqn-7-ce}
  w_{\hat{\lambda}}(\bar{x})=\inf\limits_{\left(\Sigma_{\hat{\lambda}}\setminus\overline{\Sigma_{\lambda_{0}-\varepsilon_1}}\right)\cap\Omega}w_{\hat{\lambda}}
  =\inf\limits_{\Sigma_{\hat{\lambda}}\cap\Omega}w_{\hat{\lambda}}=\inf\limits_{\Sigma_{\hat{\lambda}}}w_{\hat{\lambda}}<0.
\end{equation}
Then, similar to \eqref{contradict-ce} and \eqref{eqn-2ce}, by the assumption \eqref{assump-f-ce} in Theorem \ref{Mono-eg} and \eqref{fractional-MA}, we have
\begin{equation}\begin{split}\label{eqn-8-ce}
& \quad \mathbf{F_{s}}w_{\hat{\lambda}}(\bar{x})\leq\mathbf{F_{s}}u_{\hat{\lambda}}(\bar{x})-\mathbf{F_{s}}u(\bar{x})\\
&=-f\big((\bar{x})^{\hat{\lambda}},u_{\hat{\lambda}}(\bar{x}),(\nabla_{x}u)((\bar{x})^{\hat{\lambda}})\big)
+f\big(\bar{x},u(\bar{x}),(\nabla_{x}u)(\bar{x})\big)\\
&\leq -f\big(\bar{x},u_{\hat{\lambda}}(\bar{x}),(\nabla_{x}u)(\bar{x})\big)
+f\big(\bar{x},u(\bar{x}),(\nabla_{x}u)(\bar{x})\big)\\
&=:c(\bar{x})w_{\hat{\lambda}}(\bar{x}),
\end{split}\end{equation}
where
\begin{equation}\label{eqn-9-ce}
  c(\bar{x}):=-\frac{f\left(\bar{x},u_{\hat{\lambda}}(\bar{x}),(\nabla_{x}u)(\bar{x})\right)-f\left(\bar{x},u(\bar{x}),(\nabla_{x}u)(\bar{x})\right)}
  {u_{\hat{\lambda}}(\bar{x})-u(\bar{x})}
\end{equation}
is uniformly bounded (independent of $\varepsilon_{2}$ and $\varepsilon_{1}$), since $f(x, u, \mathbf{p})\in\mathcal{F}$ and $u\in L^{\infty}_{loc}(\mathbb{R}^n)$. Note that, by choosing $\varepsilon_{1}$ and $\varepsilon_{2}$ smaller if necessary, $\left(\Sigma_{\hat{\lambda}}\setminus\overline{\Sigma_{\lambda_{0}-\varepsilon_{1}}}\right)\cap\Omega$ is a bounded narrow region. From \eqref{eqn-8-ce}, \eqref{eqn-9-ce} and the \emph{Narrow region principle} Theorem \ref{2NRP} (see Remark \ref{rem13} and \ref{rem14}), one can derive that, for $0<\varepsilon_{2}<\varepsilon_{1}$ sufficiently small,
\begin{equation}\label{eqn-10-ce}
  w_{\hat{\lambda}}>0 \qquad \text{in} \,\, \left(\Sigma_{\hat{\lambda}}\setminus\overline{\Sigma_{\lambda_{0}-\varepsilon_{1}}}\right)\cap\Omega,
\end{equation}
which yields a contradiction with \eqref{eqn-6-ce}. As a consequence, we have, for any $\lambda\in[\lambda_{0},\lambda_{0}+\varepsilon_{2}]$, \eqref{eqn-5-ce} holds. Furthermore, it follows from the strong maximum principle Lemma \ref{SMP-anti} that
\begin{equation}\label{221-5-ce}
  w_{\lambda}(x)>0, \qquad \forall \,\, x\in\left(\Sigma_{\lambda}\setminus\overline{\Sigma_{\lambda_{0}-\varepsilon_{1}}}\right)\cap\Omega,
\end{equation}
and hence, for any $\lambda\in[\lambda_{0},\lambda_{0}+\varepsilon_{2}]$,
\begin{equation}\label{221-6-ce}
  w_{\lambda}(x)>0, \qquad \forall \,\, x\in\Sigma_{\lambda}\cap\Omega.
\end{equation}
This contradicts the definition \eqref{M-7} of $\lambda_{0}$. Thus, we must have $\lambda_0=+\infty$. This completes the proof of Theorem \ref{Mono-eg}.
\end{proof}	

\begin{rem}\label{rem16}
Typical forms of $f(x,u,\nabla u)$ which satisfies all the assumptions in Theorem \ref{Mono-eg} include: $f(x,u,\nabla u)=u^{p}(1+|\nabla u|^{2})^{\frac{\sigma}{2}}$ with $p\geq1$ and $\sigma\leq0$, $f(x,u,\nabla u)=e^{\kappa u}(1+|\nabla u|^{2})^{\frac{\sigma}{2}}$ with $\kappa\in\mathbb{R}$ and $\sigma\leq0$, and $f(x,u,\nabla u)=K(x)(1+|\nabla u|^{2})^{\frac{\sigma}{2}}$ with $\sigma\in\mathbb{R}$ and $K(x)$ nondecreasing w.r.t. $x_{n}$.
\end{rem}

\begin{rem}\label{rem9}
Theorem \ref{Mono-eg} is counterpart for the monotonicity results in Theorem 1.3 in Dipierro, Soave and Valdinoci \cite{DSV} for $(-\Delta)^{s}$, Theorem 2.24 in Dai, Qin and Wu \cite{DQW} for $(-\Delta+m^{2})^{s}$, Theorem 1.3 in Berestycki, Caffarelli and Nirenberg \cite{BCN2} and Proposition II.1 in Esteban and Lions \cite{EL} for $-\Delta$.
\end{rem}

\subsection{Schr\"odinger equations in $\mathbb{R}^{n}$}
Consider the \emph{static Schr\"odinger equations} involving the uniformly elliptic nonlocal Bellman operator:
\begin{equation}\label{sta_poly}
-\mathbf{F_{s}}u(x)+u(x)=u^p(x), \qquad \forall \,\,x\in\mathbb{R}^{n}.
\end{equation}

We will prove the following symmetry and monotonicity result for nonnegative solution to \eqref{sta_poly} via the method of moving planes for $\mathbf{F_{s}}$.
\begin{thm}\label{T:sub_symm}
	Assume that $u \in\mathcal{L}_{s}(\mathbb{R}^{n})\cap C_{loc}^{1,1}(\mathbb{R}^{n})$ is a nonnegative solution of \eqref{sta_poly} with $1<p<+\infty$. If
	\begin{equation}\label{sub_con}
	\limsup\limits_{|x|\rightarrow+\infty}u(x)=l<\left(\frac{1}{p}\right)^{\frac{1}{p-1}},
	\end{equation}
	then $u$ must be symmetric and monotone decreasing (along the direction $\mathbf{e}$) w.r.t. some hyper-plane $T$ perpendicular to $\mathbf{e}$.
\end{thm}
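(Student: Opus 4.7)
The plan is to apply the method of moving planes in an arbitrary direction $e_1$ and then exploit the rotational invariance of the argument. Setting $T_\lambda:=\{x_1=\lambda\}$, $\Sigma_\lambda:=\{x_1<\lambda\}$, letting $x^\lambda$ denote the reflection of $x$ across $T_\lambda$, and defining $u_\lambda(x):=u(x^\lambda)$ and $w_\lambda:=u_\lambda-u$, the invariance and super-additivity properties in Lemma \ref{lem0} yield $\mathrm{F}_s w_\lambda \leq \mathrm{F}_s u_\lambda - \mathrm{F}_s u$. Substituting the equation $-\mathrm{F}_s u = u^p-u$ and applying the mean value theorem at a point where $w_\lambda(x)<0$ produces the linearized inequality
\begin{equation*}
\mathrm{F}_s w_\lambda(x) - c_\lambda(x)\,w_\lambda(x) \leq 0, \qquad c_\lambda(x):=1 - p\,\xi_\lambda(x)^{p-1},
\end{equation*}
with $\xi_\lambda(x)$ between $u_\lambda(x)$ and $u(x)$. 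Nonnegativity of $u$ together with $w_\lambda(x)<0$ forces $\xi_\lambda(x)\leq u(x)$, so one has the pointwise bound $c_\lambda(x)\geq 1 - p\,u(x)^{p-1}$.

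To initiate the procedure, I would use \eqref{sub_con} to fix $\varepsilon>0$ so small that $p(l+\varepsilon)^{p-1}<1$ and pick $R>0$ large enough that $u(x)<l+\varepsilon$ for $|x|>R$; this makes $c_\lambda(x)\geq 1-p(l+\varepsilon)^{p-1}>0$ at faraway points of $\{w_\lambda<0\}$, and combined with the global boundedness of $u$ (which follows from \eqref{sub_con} and local continuity), the hypothesis \eqref{dati-con} of \emph{Decay at infinity (II)} (Theorem \ref{dati}) is trivially satisfied. That theorem supplies $R_0>0$ and $\alpha_0\in(0,1)$ such that every almost-minimizer $\hat{x}$ of $w_\lambda$ on $\Sigma_\lambda$ lies in $\overline{B_{R_0}(0)}$; choosing $\lambda<-R_0$ rules out any negative infimum, so $w_\lambda\geq 0$ in $\Sigma_\lambda$. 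I then define $\lambda_0:=\sup\{\lambda\in\mathbb{R}: w_\mu\geq 0 \text{ in }\Sigma_\mu \text{ for every } \mu\leq\lambda\}\in[-R_0,+\infty]$ and split into two cases. If $\lambda_0=+\infty$, then $u$ is non-decreasing in $x_1$ globally; running the same argument on $u(-x_1,x')$ enforces the reverse monotonicity, so $u$ is independent of $x_1$. Arbitrariness of the direction then forces $u$ to be constant, and the equation combined with $l<(1/p)^{1/(p-1)}<1$ forces $u\equiv 0$, which is trivially radial.

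In the case $\lambda_0<+\infty$, the crucial assertion is $w_{\lambda_0}\equiv 0$ in $\Sigma_{\lambda_0}$: by continuity $w_{\lambda_0}\geq 0$, and if $w_{\lambda_0}\not\equiv 0$ the strong maximum principle (Lemma \ref{SMP-anti}, applicable since $\mathrm{F}_s w_{\lambda_0}\leq 0$ at any zero point of $w_{\lambda_0}$) would give $w_{\lambda_0}>0$ strictly in $\Sigma_{\lambda_0}$, and the plane could then be pushed a little past $\lambda_0$, contradicting its definition. The push-past step is the central technical point and the main obstacle. Fixing $R_1\gg R_0$, the strict positivity of $w_{\lambda_0}$ on the compact core $\overline{\Sigma_{\lambda_0-\delta}}\cap\overline{B_{R_1}(0)}$ persists by continuity of $\lambda\mapsto w_\lambda$ for $\lambda\in[\lambda_0,\lambda_0+\varepsilon]$, so any negativity of $w_\lambda$ must arise either in the thin slab $\{\lambda_0-\delta\leq x_1<\lambda\}\cap B_{R_1}(0)$, handled by \emph{Narrow Region Principle} (Theorem \ref{2NRP}, or the unbounded variant Theorem \ref{NRP-Anti-ubdd}) since $c_\lambda$ is uniformly bounded below on bounded sets, or in the exterior $\Sigma_\lambda\setminus B_{R_1}(0)$, handled by \emph{Decay at infinity (II)} with the same $R_0$ uniformly over the compact range of $\lambda$. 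Combining these principles produces the required contradiction, so $w_{\lambda_0}\equiv 0$ and $u$ is symmetric about $T_{\lambda_0}$ and strictly monotone in $x_1$ on each side. Since $e_1$ was arbitrary, every direction yields a symmetry hyperplane, and all such hyperplanes must intersect at a common point $x_*$, whence $u$ is radially symmetric about $x_*$ and monotone decreasing in the radial variable.
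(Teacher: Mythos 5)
Your proposal is correct and follows essentially the same route as the paper's proof: the linearization with $c(x)=1-pu^{p-1}(x)$, the use of \emph{Decay at infinity (II)} (Theorem \ref{dati}) to start the plane at $\lambda\leq -R_{0}$, and the combination of the strong maximum principle (Lemma \ref{SMP-anti}), the \emph{Narrow region principle} on the thin slab inside a large ball, and Decay at infinity (II) outside to push past $\lambda_{0}$, concluding by arbitrariness of the direction. The only (harmless) deviation is that you treat the possibility $\lambda_{0}=+\infty$ explicitly, reducing it to $u\equiv 0$, whereas the paper simply asserts $\lambda_{0}<+\infty$ from Step 1.
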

\begin{proof}
Without loss of generalities, we assume $\mathbf{e}=\mathbf{e}_{1}:=(1,0,\cdots,0)$. In order to apply the method of moving planes, we need some notations. For arbitrary $\lambda\in\mathbb{R}$, let
	$$
	T_{\lambda}:=\left\{x \in \mathbb{R}^{n} | x_{1}=\lambda\right\}
	$$
	be the moving planes,
	\begin{equation}
	\Sigma_{\lambda}:=\left\{x \in \mathbb{R}^{n} | x_{1}<\lambda\right\}
	\end{equation}
	be the region to the left of the plane, and
	$$
	x^{\lambda}:=\left(2 \lambda-x_{1}, x_{2}, \cdots, x_{n}\right)
	$$
	be the reflection of $x$ about the plane $T_{\lambda}$.
	
	Assume that $u$ is a nonnegative solution of the  Schr\"odinger equations \eqref{sta_poly}. To compare the values of $u(x)$ with $u\left(x^{\lambda}\right)$, we define
	$$
	w_{\lambda}(x):=u\left(x^{\lambda}\right)-u(x), \quad \forall \,\, x\in\Sigma_{\lambda}.
	$$
	Then, for any $\lambda\in\mathbb{R}$, at points $x\in\Sigma_{\lambda}$ where $w_{\lambda}(x)<0$, we have
	\begin{equation}\label{sub_diff}
	\mathbf{F_{s}} w_{\lambda}(x)-c(x)w_{\lambda}(x)\leq 0,
	\end{equation}
	where $c(x):=1-pu^{p-1}(x)$. From the assumption \eqref{sub_con}, we infer that, for any $\lambda\in\mathbb{R}$,
	\begin{equation}\label{223-0}
	\liminf\limits_{\substack{x\in\Sigma_{\lambda},\, w_{\lambda}(x)<0 \\ |x|\rightarrow+\infty}}c(x)>0.
	\end{equation}
	
\medskip

	We carry out the moving planes procedure in two steps.
	
\medskip

	\emph{Step 1.} We use Theorem \ref{dati} (\emph{Decay at infinity (II)}) to show that, for sufficiently negative $\lambda$,
	\begin{equation}\label{223-aim}
	w_{\lambda}(x)\geq 0, \quad \forall \,\, x\in\Sigma_{\lambda}.
	\end{equation}
	
	In fact, from assumption \eqref{sub_con}, we know that $u$ is bounded from above and hence $w_{\lambda}$ is bounded from below for any $\lambda\in\mathbb{R}$. Suppose that $\inf\limits_{\Sigma_{\lambda}}w_{\lambda}<0$. By \eqref{sub_diff} and \eqref{223-0}, we can deduce from Theorem \ref{dati} (\emph{Decay at infinity (II)}) that, there exist $R_{0}>0$ large and $0<\gamma_{0}<1$ close to $1$ (independent of $\lambda$) such that, if $\hat{x}\in\Sigma_{\lambda}$ satisfying $w_{\lambda}(\hat{x})\leq\gamma_{0}\inf\limits_{\Sigma_{\lambda}}w_{\lambda}<0$, then $|\hat{x}|\leq R_{0}$. This will lead to a contradiction provided that $\lambda\leq-R_{0}$. Thus we have, for any $\lambda\leq-R_{0}$, $w_{\lambda}\geq0$ in $\Sigma_{\lambda}$.

\medskip
	
\emph{Step 2.} Step 1 provides a starting point, from which we can now move the plane $T_\lambda$ to the right as long as \eqref{223-aim} holds to its limiting position.
	
	To this end, let us define
	\begin{equation}\label{223-5}
	\lambda_{0}:=\sup\left\{\lambda\in\mathbb{R} \mid w_{\mu}\geq 0 \,\, \text{in} \,\, \Sigma_{\mu}, \,\, \forall \,\mu \leq \lambda\right\}.
	\end{equation}
	It follows from Step 1 that $-R_{0}\leq\lambda_{0}<+\infty$. One can easily verify that
	\begin{equation}\label{223-1}
	w_{\lambda_{0}}(x)\geq0, \quad \forall \,\, x\in\Sigma_{\lambda_{0}}.
	\end{equation}
	
	Next, we are to show via contradiction arguments that
	\begin{equation}\label{223-2}
	w_{\lambda_{0}}(x) \equiv 0, \quad \forall \,\, x \in \Sigma_{\lambda_{0}}.
	\end{equation}
	Suppose on the contrary that
	\begin{equation}\label{223-11}
	w_{\lambda_{0}}\geq 0 \,\, \text { but } \,\, w_{\lambda_{0}}\not\equiv 0 \quad \text { in } \,\, \Sigma_{\lambda_{0}},
	\end{equation}
	then we must have
	\begin{equation}\label{223-3}
	w_{\lambda_{0}}(x)>0, \quad \forall \,\, x\in\Sigma_{\lambda_{0}}.
	\end{equation}
	In fact, if \eqref{223-3} is violated, then there exists a point $\hat{x}\in \Sigma_{\lambda_{0}}$ such that
	$$
	w_{\lambda_{0}}(\hat{x})=\min\limits_{\Sigma_{\lambda_{0}}}w_{\lambda_{0}}=0.
	$$
	Then it follows from \eqref{sta_poly} that
	\begin{equation}\label{223-4}
	\mathbf{F_{s}} w_{\lambda_{0}}(\hat{x})\leq 0,
	\end{equation}
	and hence Lemma \ref{SMP-anti} implies that $w_{\lambda_{0}}\equiv0$ in $\Sigma_{\lambda_{0}}$, which contradicts \eqref{223-11}. Thus $w_{\lambda_{0}}(x)>0$ in $\Sigma_{\lambda_{0}}$.
	
	Then we will show that the plane $T_{\lambda}$ can be moved a little bit further from $T_{\lambda_{0}}$ to the right. More precisely, there exists an $\delta>0$, such that for any $\lambda\in\left[\lambda_{0},\lambda_{0}+\delta\right]$, we have
	\begin{equation}\label{223-6}
	w_{\lambda}(x)\geq 0, \quad \forall \,\, x \in \Sigma_{\lambda}.
	\end{equation}
	
	In fact, \eqref{223-6} can be achieved by using the \emph{Narrow region principle} Theorem \ref{2NRP} and the \emph{Decay at infinity (II)} Theorem \ref{dati}. First, since $c(x):=1-pu^{p-1}(x)$ is uniformly bounded, we can choose $\delta_{1}>0$ small enough such that $\left(\Sigma_{\lambda_{0}+\delta_{1}}\setminus\overline{\Sigma_{\lambda_{0}-\delta_{1}}}\right)\cap B_{R_{\ast}}(0)$ is a narrow region, where $R_{\ast}:=R_{0}+|\lambda_{0}|\geq R_{0}$ with $R_{0}$ given by \emph{Decay at infinity (II)} Theorem \ref{dati}. From \eqref{223-3}, we deduce that, there exists a $c_{0}>0$ such that
	\begin{equation}\label{223-7}
	w_{\lambda_{0}}(x)\geq c_{0}>0, \qquad \forall \,\, x\in\overline{\Sigma_{\lambda_{0}-\delta_{1}}\cap B_{R_{\ast}}(0)}.
	\end{equation}
	As a consequence, due to the continuity of $w_{\lambda}$ w.r.t. $\lambda$, there exists a $0<\delta_{2}<\delta_{1}$ sufficiently small such that, for any $\lambda\in[\lambda_{0},\lambda_{0}+\delta_{2}]$,
	\begin{equation}\label{223-8}
	w_{\lambda}(x)>0, \qquad \forall \,\, x\in\overline{\Sigma_{\lambda_{0}-\delta_{1}}\cap B_{R_{\ast}}(0)}.
	\end{equation}
	For any $\lambda\in[\lambda_{0},\lambda_{0}+\delta_{2}]$, if we suppose that $\inf\limits_{\Sigma_{\lambda}}w_{\lambda}(x)<0$, then the \emph{Decay at infinity (II)} Theorem \ref{dati} implies that
	\[w_{\lambda}(x)>\gamma_{0}\inf\limits_{\Sigma_{\lambda}}w_{\lambda}(x), \quad \forall \,\, x\in\Sigma_{\lambda}\setminus\overline{B_{R_{0}}(0)},\]
	and hence the negative minimum $\inf\limits_{\Sigma_{\lambda}}w_{\lambda}(x)$ can be attained in $B_{R_{0}}(0)\cap\Sigma_{\lambda}$. Then, from \eqref{223-8}, we infer that, if $\inf\limits_{\Sigma_{\lambda}}w_{\lambda}(x)<0$, then the negative minimum $\inf\limits_{\Sigma_{\lambda}}w_{\lambda}(x)$ can be attained in the narrow region $\left(\Sigma_{\lambda}\setminus\overline{\Sigma_{\lambda_{0}-\delta_{1}}}\right)\cap B_{R_{\ast}}(0)$. Therefore, from \emph{Narrow region principle} Theorem \ref{2NRP} (see Remark \ref{rem13}), we get, for any $\lambda\in[\lambda_{0},\lambda_{0}+\delta_{2}]$,
	\begin{equation}\label{223-9}
	w_{\lambda}(x)>0, \qquad \forall \,\, x\in\left(\Sigma_{\lambda}\setminus\overline{\Sigma_{\lambda_{0}-\delta_{1}}}\right)\cap B_{R_{\ast}}(0),
	\end{equation}
	and hence
	\begin{equation}\label{223-10}
	w_{\lambda}(x)\geq0, \qquad \forall \,\, x\in\Sigma_{\lambda}.
	\end{equation}
	Thus \eqref{223-6} holds, which contradicts the definition \eqref{223-5} of $\lambda_{0}$. Hence \eqref{223-2} must be valid. This completes the proof of Theorem \ref{T:sub_symm}.
\end{proof}
\begin{rem}\label{rem15}
	If we use \emph{Decay at infinity (I)} Theorem \ref{P:decay} in the proof of Theorem \ref{T:sub_symm}, then we will need the stronger assumption $$\lim\limits_{|x|\rightarrow+\infty}u(x)=l<\left(\frac{1}{p}\right)^{\frac{1}{p-1}}$$
	instead of \eqref{sub_con}. One can observe that, by using \emph{Decay at infinity (II)} Theorem \ref{dati} instead of Theorem \ref{P:decay}, the ``limit" can be weaken into ``superior limit" in assumption \eqref{sub_con}.
\end{rem}

\section{Maximum principles in unbounded domains and applications}

In this section, we will establish various maximum principles for $\mathrm{F}_{s}$ and $\mathbf{F_{s}}$ in unbounded domains. As applications, we will develop the sliding method (on general unbounded domains) for $\mathrm{F}_{s}$ and $\mathbf{F_{s}}$ and apply the moving planes method for $\mathbf{F_{s}}$ to investigate monotonicity, uniqueness and asymptotic property of solutions to various problems involving the uniformly elliptic nonlocal Bellman operators $\mathrm{F}_{s}$ and $\mathbf{F_{s}}$.

\subsection{Maximum principles in unbounded domains}
First, we can prove the following strong maximum principle.
\begin{lem}(Strong maximum principle)\label{SMP-Ubdd}
	Suppose that $u\in\mathcal{L}_{s}(\mathbb{R}^{n})$ and $u\geq0$ in $\mathbb{R}^{n}$. If there exists $x_{0}\in\mathbb{R}^{n}$ such that, $u(x_{0})=0$, $u$ is $C^{1,1}$ near $x_{0}$ and $\mathrm{F}_{s}u(x_{0})\leq0$, then $u=0$ a.e. in $\mathbb{R}^n$.
\end{lem}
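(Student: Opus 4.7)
The plan is to mimic very closely the argument already carried out in Lemma \ref{SMP-anti} (the strong maximum principle for anti-symmetric functions), but in a simplified form: since we now assume $u \geq 0$ on all of $\mathbb{R}^{n}$ together with $u(x_{0}) = 0$, the point $x_{0}$ is a global minimum, so no reflection trick is needed to make the relevant integrand nonnegative. First I would note that $u \in C^{1,1}$ near $x_{0}$ and $u \in \mathcal{L}_{s}(\mathbb{R}^{n})$ ensure $\mathrm{F}_{s}u(x_{0})$ is well-defined, and then use $u(x_{0})=0$ to rewrite
\begin{equation*}
\mathrm{F}_{s}u(x_{0}) \;=\; \inf_{\theta I \leq A \leq \Theta I} \, P.V.\!\int_{\mathbb{R}^{n}} \frac{u(y)}{|A^{-1}(x_{0} - y)|^{n+2s}}\, dy.
\end{equation*}

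Because $u \geq 0$ everywhere, each integrand is nonnegative, so the principal value is an ordinary Lebesgue integral and every member of the family is $\geq 0$; hence $\mathrm{F}_{s}u(x_{0}) \geq 0$. Combined with the hypothesis $\mathrm{F}_{s}u(x_{0}) \leq 0$ this forces the infimum to equal zero. To pass from this to pointwise vanishing, I would use the uniform ellipticity bound: $A \leq \Theta I$ (equivalently $A^{-1} \geq \Theta^{-1} I$) gives $|A^{-1}(x_{0}-y)| \geq \Theta^{-1}|x_{0}-y|$ and therefore the $A$-independent lower bound
\begin{equation*}
\int_{\mathbb{R}^{n}} \frac{u(y)}{|A^{-1}(x_{0} - y)|^{n+2s}}\, dy \;\geq\; \Theta^{-(n+2s)} \int_{\mathbb{R}^{n}} \frac{u(y)}{|x_{0} - y|^{n+2s}}\, dy.
\end{equation*}
Taking the infimum over $A$ and combining with $\mathrm{F}_{s}u(x_{0}) = 0$ yields $\int_{\mathbb{R}^{n}} u(y)|x_{0}-y|^{-(n+2s)}\, dy = 0$, and since $u \geq 0$ this concludes $u = 0$ a.e. in $\mathbb{R}^{n}$.

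There is no real obstacle: the only point that differs from Lemma \ref{SMP-anti} is handling the infimum, which the uniform ellipticity bound $A \leq \Theta I$ takes care of by reducing the family of kernels to the single fixed kernel $|x_{0}-y|^{-(n+2s)}$. Thus the whole argument is a three-line calculation analogous to the proof of Lemma \ref{SMP-anti}, and in fact this lemma can be viewed as the non-anti-symmetric companion used later as a strong maximum principle input for unbounded-domain sliding and moving-planes arguments in Section~4.
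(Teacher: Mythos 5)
Your overall strategy is exactly the paper's: since $x_{0}$ is a global minimum with $u(x_{0})=0$, write $\mathrm{F}_{s}u(x_{0})=\inf_{A} P.V.\int_{\mathbb{R}^{n}}u(y)\,|A^{-1}(x_{0}-y)|^{-(n+2s)}\,dy$, observe each member of the family is nonnegative, and conclude from $\mathrm{F}_{s}u(x_{0})\leq 0$ that the infimum vanishes; the paper stops essentially there, while you (rightly) add a kernel-comparison step to pass from ``infimum $=0$'' to $u=0$ a.e., which is a genuine point since an infimum of nonnegative quantities being zero does not by itself force any single integral to vanish.

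However, that added step is written with the ellipticity bound on the wrong side, and the displayed inequality is false as stated. From $A\leq\Theta I$ you correctly get $A^{-1}\geq\Theta^{-1}I$ and hence $|A^{-1}(x_{0}-y)|\geq\Theta^{-1}|x_{0}-y|$, but this \emph{enlarges} the denominator and therefore gives the \emph{upper} bound $\int u(y)|A^{-1}(x_{0}-y)|^{-(n+2s)}dy\leq\Theta^{n+2s}\int u(y)|x_{0}-y|^{-(n+2s)}dy$, not the $A$-independent lower bound you claim. What you need is the other half of the uniform ellipticity condition: $A\geq\theta I$ gives $\lambda_{\max}(A^{-1})\leq\theta^{-1}$, hence $|A^{-1}(x_{0}-y)|\leq\theta^{-1}|x_{0}-y|$ and so $|A^{-1}(x_{0}-y)|^{-(n+2s)}\geq\theta^{\,n+2s}|x_{0}-y|^{-(n+2s)}$ for every admissible $A$. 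Taking the infimum then yields $0=\mathrm{F}_{s}u(x_{0})\geq\theta^{\,n+2s}\int_{\mathbb{R}^{n}}u(y)|x_{0}-y|^{-(n+2s)}dy$, and with $u\geq 0$ this forces $u=0$ a.e. With this one-line correction (replace $\Theta$ by $\theta$ and use $\lambda_{\min}(A)\geq\theta$), your argument is complete and in fact slightly more detailed than the paper's own proof.
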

\begin{proof}
	Since there exists $x_0\in\mathbb{R}^{n}$ such that $u(x_0)=\min\limits_{x\in\mathbb{R}^n}u(x)=0$, it follows that
	\begin{align*}
	0&\geq \mathrm{F}_{s}u(x_0)\\
	&=\inf P.V. \int_{\mathbb{R}^{n}} \frac{u(y)-u(x_0)}{|A^{-1}(x_0-y)|^{n+2s}}\mathrm{d}y\\
	&=\inf P.V. \int_{\mathbb{R}^{n}} \frac{u(y)}{|A^{-1}(x_0-y)|^{n+2s}}\mathrm{d}y\\
	&\geq 0.
	\end{align*}
	Thus we must have $u=0$ a.e. in $\mathbb{R}^{n}$. This finishes the proof of Lemma \ref{SMP-Ubdd}.
\end{proof}

\begin{thm}(Maximum Principles in unbounded open sets)\label{MP-Ubdd}
	Let $D$ be an open set in $\mathbb{R}^n$, possibly unbounded and disconnected. Assume that $\overline{D}$ is disjoint from an infinite open domain $\Gamma\subset\overline{D}^c$ satisfying
	\begin{eqnarray}\label{domain}
	\frac{|\Gamma\cap (B_{c_1r_x}(x)\backslash B_{r_x}(x))|}{|B_{c_1r_x}(x)\backslash B_{r_x}(x)|}\geq c_{0}>0, \qquad \forall \,\, x\in D
	\end{eqnarray}
	for some constants $c_1>1$, $c_{0}>0$ independent of $x$ and $r_x>0$ possibly depending on $x$.
	
	Suppose that $u\in \mathcal{L}_{s}(\mathbb{R}^{n})\cap C_{\text {loc}}^{1,1}(D)$ is bounded from above, and satisfies
	\begin{equation}\label{MP2-2}
	\begin{cases} \mathrm{F}_{s} u(x)-c(x)u(x)\geq 0 \quad \text{at points} \,\, x\in D \,\, \text{where} \,\,u(x)>0,\\ u(x)\leq 0, \quad x\in \mathbb{R}^{n}\setminus D,
	\end{cases}
	\end{equation}
	where $c(x)$ is nonnegative in the set $\{x\in D \mid u(x)>0\}$. Then $u\leq 0$ in $D$.
	
	Furthermore, assume that
	\begin{equation}\label{MP2-CON}
	\mathrm{F}_{s}{u}(x)\geq0 \quad \text{at points} \,\, x\in D \,\, \text{where} \,\, u(x)=0,
	\end{equation}
	then we have
	\begin{eqnarray}\label{3NRP-4}
	\mbox{ either } \quad u(x)<0 \mbox{ in } D \quad \mbox{ or } \quad u(x)=0 \mbox{ a.e. in } \mathbb{R}^n. \end{eqnarray}
\end{thm}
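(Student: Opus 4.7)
I will mirror the cutoff--perturbation argument from the proof of Theorem \ref{MP_anti_ubdd}, with the density condition \eqref{domain} on $\Gamma$ playing the role that anti-symmetry across $T$ played there. Arguing by contradiction, suppose $M:=\sup_{D}u>0$. Since $u$ is bounded above and $u\leq 0$ on $\mathbb{R}^{n}\setminus D$, pick $\{x^{k}\}\subset D$ with $u(x^{k})\geq\alpha_{k}M$ and $\alpha_{k}\nearrow 1$, write $r_{k}:=r_{x^{k}}$ for the density radius given by \eqref{domain}, and set $d_{k}:=\min\{r_{k},\tfrac{1}{2}\mathrm{dist}(x^{k},\partial D)\}$. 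With the same $C_{0}^{\infty}$-bump $\psi$ used in Theorem \ref{MP_anti_ubdd}, define
\[\psi_{k}(x):=\psi\!\left(\frac{x-x^{k}}{d_{k}}\right),\qquad \varepsilon_{k}:=(1-\alpha_{k})M,\qquad u_{k}:=u+\varepsilon_{k}\psi_{k}.\]
Then $u_{k}\equiv u$ outside $B_{d_{k}}(x^{k})\subset D$, $u_{k}(x^{k})\geq M$, and $\Gamma\cap B_{d_{k}}(x^{k})=\emptyset$ because $\Gamma\subset\overline{D}^{c}$. Consequently $u_{k}$ attains its supremum at some $\bar{x}^{k}\in B_{d_{k}}(x^{k})\subset D$ with $u(\bar{x}^{k})\geq\alpha_{k}M>0$, while $u_{k}=u\leq 0$ on $\Gamma$.

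\textbf{Sandwich estimate for $\mathrm{F}_{s}u_{k}(\bar{x}^{k})$.} The core of the proof is to bound $\mathrm{F}_{s}u_{k}(\bar{x}^{k})$ from above and below by incompatible quantities. Since $\bar{x}^{k}$ is a supremum, the integrand defining $\mathrm{F}_{s}u_{k}(\bar{x}^{k})$ is nonpositive everywhere, while on $\Gamma$ it is at most $-u_{k}(\bar{x}^{k})/|A^{-1}(y-\bar{x}^{k})|^{n+2s}$. Dropping the nonpositive complementary contribution, specialising to $A=\theta I$ inside the resulting integral over $\Gamma$, and invoking \eqref{domain} on the annulus $B_{c_{1}r_{k}}(x^{k})\setminus B_{r_{k}}(x^{k})$ (using $\bar{x}^{k}\in B_{r_{k}}(x^{k})$, so that $|y-\bar{x}^{k}|\leq(c_{1}+1)r_{k}$ on the annulus), I expect
\[\mathrm{F}_{s}u_{k}(\bar{x}^{k})\leq -\frac{C_{1}\,M}{r_{k}^{2s}}\]
for some $C_{1}=C_{1}(n,s,\theta,c_{0},c_{1})>0$. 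For the lower bound, subadditivity (Lemma \ref{lem0}(b)), the assumption $\mathrm{F}_{s}u(\bar{x}^{k})\geq c(\bar{x}^{k})u(\bar{x}^{k})\geq 0$ (since $u(\bar{x}^{k})>0$ and $c\geq 0$ there), and the scaling estimate $|\mathrm{F}_{s}\psi_{k}|\leq C_{0}/d_{k}^{2s}$ give
\[\mathrm{F}_{s}u_{k}(\bar{x}^{k})\geq\mathrm{F}_{s}u(\bar{x}^{k})+\varepsilon_{k}\mathrm{F}_{s}\psi_{k}(\bar{x}^{k})\geq -\frac{C_{0}\varepsilon_{k}}{d_{k}^{2s}}.\]
Combining and using $\varepsilon_{k}=(1-\alpha_{k})M$ yields $C_{1}(d_{k}/r_{k})^{2s}\leq C_{0}(1-\alpha_{k})$, which along the subsequence where $d_{k}\asymp r_{k}$ forces $1-\alpha_{k}\gtrsim 1$, contradicting $\alpha_{k}\to 1$; hence $u\leq 0$ in $D$.

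\textbf{Strong maximum principle and the main obstacle.} For the second conclusion, once $u\leq 0$ on $\mathbb{R}^{n}$, if $u(x_{0})=0$ at some $x_{0}\in D$ then $u(y)-u(x_{0})=u(y)\leq 0$ for every $y$, so $\mathrm{F}_{s}u(x_{0})\leq 0$; combining with \eqref{MP2-CON} gives $\mathrm{F}_{s}u(x_{0})=0$, and specialising the defining infimum to $A=\theta I$ forces $\int u(y)/|y-x_{0}|^{n+2s}\,dy=0$, whence $u\equiv 0$ a.e.\ in $\mathbb{R}^{n}$. The main technical obstacle I foresee is the scale mismatch between the cutoff radius $d_{k}$ and the density radius $r_{k}$: if $\mathrm{dist}(x^{k},\partial D)\ll r_{k}$ along the maximizing sequence, the lower bound blows up as $d_{k}^{-2s}$ while the upper bound only yields $r_{k}^{-2s}$, and the contradiction collapses. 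To handle this I expect either to pass to a subsequence with $d_{k}\asymp r_{k}$ (exploiting the freedom to perturb $x^{k}$ inside the open superlevel set $\{u>\alpha_{k}M\}$) or to enlarge the bump to a support of radius $r_{k}$ while keeping it disjoint from $\Gamma\cap(B_{c_{1}r_{k}}(x^{k})\setminus B_{r_{k}}(x^{k}))$, which is the natural region where the density hypothesis delivers its dividend.
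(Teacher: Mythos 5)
Your overall strategy---perturb $u$ by a scaled bump, sandwich $\mathrm{F}_{s}$ of the perturbed function at its maximum point between an upper bound coming from the density hypothesis \eqref{domain} and a lower bound coming from the differential inequality, then finish the second assertion with the strong maximum principle---is exactly the paper's. But your execution has a genuine gap, which you yourself flag and do not close: by scaling the bump with $d_{k}=\min\{r_{k},\tfrac12\mathrm{dist}(x^{k},\partial D)\}$ so as to force its support to lie in $D$ and stay away from $\Gamma$, you produce a lower bound of size $-C_{0}\varepsilon_{k}/d_{k}^{2s}$ against an upper bound of size $-C_{1}M/r_{k}^{2s}$, and when $\mathrm{dist}(x^{k},\partial D)\ll r_{x^{k}}$ the contradiction evaporates. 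This scenario cannot be ruled out: $D$ may be a thin slab (every point close to $\partial D$) while the radii $r_{x}$ in \eqref{domain} are large, so your first repair (perturbing the maximizing sequence inside $\{u>\alpha_{k}M\}$ to get $d_{k}\asymp r_{k}$) fails in general, and your second repair is only sketched.

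The resolution is that the constraint you imposed is unnecessary. The paper takes $\Psi_{k}(x):=\psi\bigl((x-x^{k})/r_{x^{k}}\bigr)$, scaled by the density radius itself, with no requirement that $B_{r_{x^{k}}}(x^{k})\subset D$ or that it avoid $\Gamma$. The maximum point $\bar{x}^{k}$ of $u+\epsilon_{k}\Psi_{k}$ still lies in $B_{r_{x^{k}}}(x^{k})$, and it lies in $D$ not because of support containment but because $u(\bar{x}^{k})\geq M-\epsilon_{k}=\beta_{k}M>0$ while $u\leq0$ off $D$; that is all one needs to invoke the differential inequality and the $C^{1,1}_{loc}$ regularity at $\bar{x}^{k}$. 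The lower bound then reads $\mathrm{F}_{s}[u+\epsilon_{k}\Psi_{k}](\bar{x}^{k})\geq c(\bar{x}^{k})u(\bar{x}^{k})-C_{0}\epsilon_{k}/r_{x^{k}}^{2s}\geq-C_{0}\epsilon_{k}/r_{x^{k}}^{2s}$, while the upper bound, obtained by discarding the nonpositive contribution inside $B_{r_{x^{k}}}(x^{k})$ and keeping only $\Gamma\cap\bigl(B_{c_{1}r_{x^{k}}}(x^{k})\setminus B_{r_{x^{k}}}(x^{k})\bigr)$, where $\Psi_{k}\equiv0$ and $u\leq0$, is at most $-CM/r_{x^{k}}^{2s}$; both bounds now live at the same scale $r_{x^{k}}$, giving $C_{0}(1-\beta_{k})\geq C$ and the contradiction. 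Your treatment of the second assertion (the strong maximum principle at a zero of $u$) is fine and matches Lemma \ref{SMP-Ubdd}.
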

\begin{proof}
	Suppose on the contrary that there exists one point $x\in D$ such that $u(x)>0$, then we have
	\begin{equation}\label{MP2-3}
	0<M:=\sup\limits_{x\in \mathbb{R}^{n}} u(x) <\infty.
	\end{equation}
	There exists sequences $x^k\in D$ and $0<\beta_k<1$ with $\beta_k\rightarrow 1$ as $k\rightarrow \infty$ such that
	\begin{equation}\label{MP2-4}
	u(x^k)\geq \beta_k M.
	\end{equation}
	Let
	\begin{equation*}
	\psi(x)=\begin{cases}e^{\frac{|x|^{2}}{|x|^2-1}}, \qquad |x|<1\\ 0, \qquad \qquad\, |x|\geq 1.\end{cases}
	\end{equation*}
	It is well known that $\psi\in C_0^\infty(\mathbb{R}^n)$, therefore $|\mathrm{F}_{s}\psi(x)|\leq C_0$ for any $x \in \mathbb{R}^n$. Moreover, $\mathrm{F}_{s}\psi(x)\sim|x|^{-n-2s}$ as $|x|\rightarrow+\infty$.
	
	Define $$ \Psi_k(x):=\psi\left(\frac{x-x^k}{r_{x^k}}\right).$$
	Take $\epsilon_k:=(1-\beta_k)M$. Since $u\leq M$ and $\Psi_k=0$ in $\mathbb{R}^n\setminus B_{r_{x^k}}(x^k)$, we have
	\begin{equation}\label{MP2-5}
	u(x^k)+\epsilon_k\Psi_{k}(x^k)\geq M\geq u(x)+\epsilon_k\Psi_{k}(x),
	\end{equation}
	for any $x\in \mathbb{R}^n\setminus B_{r_{x^k}}(x^k)$.
	Consequently, there exists $\bar{x}^k\in B_{r_{x^k}}(x^k)$ such that
	\begin{equation}\label{MP2-6}
	u(\bar{x}^k)+\epsilon_k\Psi_{k}(\bar{x}^k)= \max\limits_{x\in\mathbb{R}^{n}}[u(x)+\epsilon_k\Psi_{k}(x)]\geq M,
	\end{equation}
	which also implies that
	\begin{equation}\label{MP2-7}
	u(\bar{x}^k)\geq u(x^k)+\epsilon_k\Psi_{k}(x^k)-\epsilon_k\Psi_{k}(\bar{x}^k)\geq u(x^k)\geq \beta_{k}M>0.
	\end{equation}
	
	Therefore, we deduce from \eqref{MP2-6} that
	\begin{align}\label{MP2-8}
	&\quad \mathrm{F}_{s}[u+\epsilon_k\Psi_{k}](\bar{x}^k)\nonumber\\
	&=\inf P.V.\int_{\mathbb{R}^{n}}\frac{u(y)+\epsilon_k\Psi_{k}(y)-u(\bar{x}^k)-\epsilon_k\Psi_{k}(\bar{x}^k)}{|A^{-1}(\bar{x}^k-y)|^{n+2s}} \mathrm{d}y\nonumber\\	
	&=\inf\left[ P.V.\int_{B_{r_{x^k}}(x^k)}\frac{u(y)+\epsilon_k\Psi_{k}(y)-u(\bar{x}^k)-\epsilon_k\Psi_{k}(\bar{x}^k)}{|A^{-1}(\bar{x}^k-y)|^{n+2s}} \mathrm{d}y\right.\nonumber\\
	&\left.\qquad+\int_{\left(B_{r_{x^k}}(x^k)\right)^{c}}\frac{u(y)+\epsilon_k\Psi_{k}(y)-u(\bar{x}^k)-\epsilon_k\Psi_{k}(\bar{x}^k)}{|A^{-1}(\bar{x}^k-y)|^{n+2s}} \mathrm{d}y\right]\nonumber\\
	&\leq \inf\int_{\left(B_{r_{x^k}}(x^k)\right)^{c}}\frac{u(y)+\epsilon_k\Psi_{k}(y)-u(\bar{x}^k)-\epsilon_k\Psi_{k}(\bar{x}^k)}{|A^{-1}(\bar{x}^k-y)|^{n+2s}} \mathrm{d}y\\
	&\leq \inf\int_{\Gamma\cap \left(B_{c_{1}r_{x^k}}(x^k)\setminus B_{r_{x^k}}(x^k)\right)}\frac{-u(\bar{x}^k)-\epsilon_k\Psi_{k}(\bar{x}^k)}{|A^{-1}(\bar{x}^k-y)|^{n+2s}} \mathrm{d}y\nonumber\\
	&\leq -C\left(u(\bar{x}^k)+\epsilon_k\Psi_{k}(\bar{x}^k)\right)\int_{\Gamma\cap\left(B_{c_{1}r_{x^k}}(x^k)\setminus B_{r_{x^k}}(x^k)\right)}\frac{1}{|\bar{x}^k-y|^{n+2s}} \mathrm{d}y\nonumber\\
	&\leq -C\left(u(\bar{x}^k)+\epsilon_k\Psi_{k}(\bar{x}^k)\right)\frac{1}{r_{x^k}^{2s}}\nonumber\\
	&\leq -\frac{CM}{r_{x^k}^{2s}}\nonumber.
	\end{align}
	
	Next, we will evaluate the lower bound of $\mathrm{F}_{s}[u+\epsilon_k\Psi_{k}](\bar{x}^k)$.
	
	Indeed, since \eqref{MP2-7} implies $u(\bar{x}^{k})>0$ and hence $\bar{x}^{k}\in D$, we conclude from \eqref{MP2-2} and  $|\mathrm{F}_{s}\Psi_{k}(x)|\leq C_0$ for any $x\in\mathbb{R}^{n}$ that
	\begin{align}\label{MP2-9}
	&\quad \mathrm{F}_{s}[u+\epsilon_k\Psi_{k}](\bar{x}^k)\nonumber\\
	&\geq \mathrm{F}_{s}u(\bar{x}^k)+\mathrm{F}_{s}\epsilon_k\Psi_{k}(\bar{x}^k)\\
	&\geq c(\bar{x}^{k})u(\bar{x}^k)-\frac{C_0\epsilon_{k}}{r_{x^k}^{2s}}\nonumber\\
	&\geq-\frac{C_0\epsilon_{k}}{r_{x^k}^{2s}}.\nonumber
	\end{align}
	
	Combining \eqref{MP2-8} and \eqref{MP2-9}, we derive
	\begin{equation}\label{MP2-11}
	\frac{C_0\epsilon_{k}}{r_{x^k}^{2s}}\geq \frac{CM}{r_{x^k}^{2s}},
	\end{equation}
	which implies $$C_0(1-\beta_k)\geq C.$$ This will lead to a contradiction for $k$ sufficiently large.
	
	Furthermore, if there exists a point $\tilde{x}\in D$ such that $u(\tilde{x})=0$, then it follows immediately from \eqref{MP2-CON} and Lemma \ref{SMP-Ubdd} that $u=0$ a.e. in $\mathbb{R}^n$. Therefore, we have
	$$\mbox{ either } \quad {u}(x)<0 \mbox{ in } D \quad \mbox{ or } \quad u(x)=0 \mbox{ a.e. in } \mathbb{R}^n.$$
	This completes our proof of Theorem \ref{MP-Ubdd}.
\end{proof}
\begin{rem}
	For fractional Laplacians $(-\Delta)^{s}$ ($0<s<1$), Dipierro, Soave and Valdinoci proved in \cite{DSV} Maximum Principles in unbounded open set $D$ by using Silvestre's growth lemma (\cite{S}) under the {\em exterior cone condition} that the complement of $D$ contains an infinite open connected cone $\Sigma$. Subsequently, Chen and Liu \cite{CLiu}, Chen and Wu \cite{CW2} introduced new ideas in the proof and thus significantly weakens the {\em exterior cone condition} to the following condition:
	\begin{eqnarray}\label{domain2}
	\liminf\limits_{k\rightarrow \infty}\frac{|D^c\cap (B_{2^{k+1}r}(q)\backslash B_{2^kr}(q))|}{|B_{2^{k+1}r}(q)\backslash B_{2^kr}(q)|}=c_{0}>0, \qquad \forall \,\, q\in D
	\end{eqnarray}
	for some $c_{0}>0$ and $r>0$. Typical examples of $D$ which satisfy condition (\ref{domain2}) but does not satisfy the {\em exterior cone condition} include: stripes, annulus and Archimedean spiral (refer to \cite{CLiu,CW2} for details).
	Our assumption \eqref{domain} is rather  weaker than \eqref{domain2}.
\end{rem}

From the proof of Theorem \ref{MP-Ubdd}, we can deduce the following narrow region principle in unbounded open sets.
\begin{thm}[Narrow region principle in unbounded open sets]\label{32NRP}
Let $D$ be an open set in $\mathbb{R}^{n}$ (possibly unbounded and disconnected) and $d(D):=\sup\limits_{x\in D}dist(x,D^{c})$ be the width of $D$. Assume that $D$ satisfies \eqref{domain} with $r_{x}=dist(x,D^{c})\leq d(D)$. Suppose that $u\in {{\mathcal{L}}_{s}(\mathbb{R}^{n})}\cap C_{loc}^{1,1}(D)$ is bounded from above and satisfies
	\begin{eqnarray}\label{32NRP-1}\left\{\begin{array}{ll}
	\mathrm{F}_{s}{u}(x)-c(x){u}(x) \geq 0 &\quad \text{at points} \,\, x\in D \,\, \text{where} \,\, u(x)>0, \\
	{u}(x)\leq 0 &\quad \mbox{ in } D^c,
	\end{array} \right. \end{eqnarray}
	where $c(x)$ is uniformly bounded from below (w.r.t. $d(D)$) in $\left\{x\in D \mid u(x)>0\right\}$. If we assume that
	\begin{eqnarray}\label{32NRP-2}
	\inf\limits_{\{x\in D \mid u(x)>0\}}\, c(x)>-\frac{C}{4d(D)^{2s}},
	\end{eqnarray}
	where $C>0$ is the same constant as in \eqref{MP2-8}. Then
	\begin{eqnarray}\label{32NRP-3}
	u(x) \leqslant 0 \quad \mbox{ in } D.
	\end{eqnarray}
	Furthermore, assume that
	\begin{equation}\label{32NRP-7}
	\mathrm{F}_{s}{u}(x)\geq 0 \quad \text{at points} \,\, u\in D \,\, \text{where} \,\, u(x)=0,
	\end{equation}
	then we have
	\begin{eqnarray}\label{32NRP-4}
	\mbox{ either } \quad u(x) < 0 \mbox{ in } D \quad \mbox{ or } \quad u(x)=0 \mbox{ a.e. in } \mathbb{R}^n. \end{eqnarray}.
\end{thm}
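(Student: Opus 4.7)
The plan is to imitate, almost verbatim, the perturbation argument used in the proof of Theorem \ref{MP-Ubdd}, and to exploit the narrowness condition $r_{x^{k}}\leq d(D)$ at the very last step to contradict the pointwise lower bound \eqref{32NRP-2} on $c$. Arguing by contradiction, suppose $M:=\sup_{D}u>0$, choose $x^{k}\in D$ and $\beta_{k}\nearrow 1$ with $u(x^{k})\geq\beta_{k}M$, and set
\[
r_{k}:=\mathrm{dist}(x^{k},D^{c})\leq d(D),\qquad \epsilon_{k}:=(1-\beta_{k})M,\qquad \Psi_{k}(x):=\psi\!\left(\frac{x-x^{k}}{r_{k}}\right),
\]
with $\psi\in C_{0}^{\infty}(\mathbb{R}^{n})$ the bump function used in the proof of Theorem \ref{MP-Ubdd}. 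Exactly as in \eqref{MP2-6}--\eqref{MP2-7}, the function $u+\epsilon_{k}\Psi_{k}$ attains its supremum on $\mathbb{R}^{n}$ at some $\bar{x}^{k}\in B_{r_{k}}(x^{k})\subset D$ with $(u+\epsilon_{k}\Psi_{k})(\bar{x}^{k})\geq M$, and in particular $u(\bar{x}^{k})\geq\beta_{k}M>0$.

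I would then establish matching upper and lower bounds for $\mathrm{F}_{s}[u+\epsilon_{k}\Psi_{k}](\bar{x}^{k})$. The upper bound is the chain \eqref{MP2-8} applied verbatim, the measure condition \eqref{domain} together with $u\leq 0$ on $D^{c}\supset\Gamma$ and $\Psi_{k}\equiv 0$ outside $B_{r_{k}}(x^{k})$ producing
\[
\mathrm{F}_{s}[u+\epsilon_{k}\Psi_{k}](\bar{x}^{k})\leq -\frac{CM}{r_{k}^{2s}}.
\]
For the lower bound, the subadditivity $\mathrm{F}_{s}(u+v)\geq\mathrm{F}_{s}u+\mathrm{F}_{s}v$ from Lemma \ref{lem0}, the assumption $\mathrm{F}_{s}u(\bar{x}^{k})\geq c(\bar{x}^{k})u(\bar{x}^{k})$ at the interior point $\bar{x}^{k}\in\{u>0\}\subset D$, and the scale-invariant bound $|\mathrm{F}_{s}\Psi_{k}|\leq C_{0}/r_{k}^{2s}$ give
\[
\mathrm{F}_{s}[u+\epsilon_{k}\Psi_{k}](\bar{x}^{k})\geq c(\bar{x}^{k})u(\bar{x}^{k})-\frac{C_{0}\epsilon_{k}}{r_{k}^{2s}}\geq \Big(\inf_{\{u>0\}}c\Big)M-\frac{C_{0}\epsilon_{k}}{r_{k}^{2s}},
\]
where the last inequality uses $u(\bar{x}^{k})\leq M$; one may assume $\inf c<0$ without loss of generality, since otherwise combining the two displayed bounds immediately yields $C\leq C_{0}(1-\beta_{k})\to 0$, already a contradiction.

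Combining these two estimates and dividing by $M>0$ produces
\[
\inf_{\{x\in D \,|\, u(x)>0\}}c(x)\leq\frac{C_{0}(1-\beta_{k})-C}{r_{k}^{2s}}.
\]
For $k$ large enough that $C_{0}(1-\beta_{k})<C/2$, and using the narrowness $r_{k}\leq d(D)$, this yields $\inf c\leq -C/(2r_{k}^{2s})\leq -C/(2d(D)^{2s})$, contradicting \eqref{32NRP-2}. Hence $u\leq 0$ in $D$. Concerning the dichotomy \eqref{32NRP-4}, if $u(\tilde{x})=0$ at some $\tilde{x}\in D$, then $\tilde{x}$ is a global maximum of $u$ on $\mathbb{R}^{n}$, and \eqref{32NRP-7} together with Lemma \ref{SMP-Ubdd} (applied after the standard switch of sign, as in the concluding paragraph of the proof of Theorem \ref{MP-Ubdd}) forces $u\equiv 0$ a.e.; otherwise $u<0$ everywhere in $D$.

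The main obstacle is arithmetic bookkeeping: one has to verify that the factor $r_{k}^{-2s}$ appears on both sides of the combined inequality so that it can be cancelled, leaving only the clean comparison $\inf c\leq -C/(2d(D)^{2s})$ to contradict \eqref{32NRP-2}. This in turn depends on the upper bound scaling exactly as $M/r_{k}^{2s}$ (the homogeneity built into the measure condition \eqref{domain}) and on $|\mathrm{F}_{s}\Psi_{k}|$ scaling as $r_{k}^{-2s}$ (the standard scaling of $\mathrm{F}_{s}$ under dilations); no new ideas beyond those of Theorem \ref{MP-Ubdd} are required.
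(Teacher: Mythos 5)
Your proposal is correct and follows essentially the same route as the paper's proof: rerun the perturbation argument of Theorem \ref{MP-Ubdd} with $r_{x^{k}}=\mathrm{dist}(x^{k},D^{c})\leq d(D)$, combine the upper bound \eqref{MP2-8} with the lower bound \eqref{MP2-9}, and use $u(\bar{x}^{k})\leq M$ together with $r_{x^{k}}\leq d(D)$ to contradict \eqref{32NRP-2} (your extra case distinction on the sign of $\inf c$ is harmless; the paper instead bounds $c(\bar{x}^{k})$ directly, which amounts to the same thing). One small caveat: since $\mathrm{F}_{s}(-u)\neq-\mathrm{F}_{s}u$ (the infimum turns into a supremum), the final dichotomy should not be phrased as applying Lemma \ref{SMP-Ubdd} to $-u$ after a sign switch, but rather by repeating its one-line computation for nonpositive $u$ with $\mathrm{F}_{s}u(\tilde{x})\geq0$ — the infimum of nonpositive integrals being nonnegative forces them all to vanish — which is what the paper implicitly does as well.
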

\begin{proof}
	Indeed, we infer from \eqref{MP2-8} and \eqref{MP2-9} that
	\begin{equation}\label{MP2-12}
	c(\bar{x}^{k})u(\bar{x}^k)-\frac{C_0\epsilon_{k}}{r_{x^k}^{2s}}\leq -\frac{C M}{r_{x^k}^{2s}}.
	\end{equation}
	For $k$ sufficiently large such that $\beta_k\geq \max\left\{1-\frac{C}{2C_0}, \frac{1}{2}\right\}$, we derive from \eqref{MP2-7} and \eqref{MP2-12} that
	\begin{equation}\label{MP2-13}
	c(\bar{x}^{k})\leq -\frac{C M}{2r_{x^k}^{2s}u(\bar{x}^k)}\leq -\frac{C }{4d(\Omega)^{2s}},
	\end{equation}
	which contradicts \eqref{32NRP-2}.
	
	Furthermore, if there exists a point $\tilde{x}\in D$ such that $u(\tilde{x})=0$, then it follows immediately from \eqref{MP2-CON} and Lemma \ref{SMP-Ubdd} that $u=0$ a.e. in $\mathbb{R}^n$. Therefore, we have
	$$\mbox{ either } \quad {u}(x)<0 \mbox{ in } D \quad \mbox{ or } \quad u(x)=0 \mbox{ a.e. in } \mathbb{R}^n.$$
	This completes our proof of Theorem \ref{32NRP}.
\end{proof}

\begin{rem}\label{rem3}
In Theorem \ref{32NRP}, if the positive maximum of $u$ is attained in $D$, then we only need to assume $\mathrm{F}_{s}{u}(x)-c(x){u}(x) \geq 0$ at points $x\in D$ where $u(x)=\sup\limits_{D}u(x)>0$, and $\inf\limits_{\{x\in D \mid u(x)>0\}}\, c(x)$ in \eqref{32NRP-2} can be replaced by the infimum of $c(x)$ over the set consisting of positive maximum points of $u$, the same conclusions are still valid. Indeed, if the positive maximum of $u$ is attained at some points $\hat{x}\in D$, then in the proof of Theorem \ref{32NRP}, we may simply replace $\bar{x}^{k}$ by the positive maximum point $\hat{x}$ and take $\epsilon_{k}=0$, and we get
\begin{equation}\label{MP2-12'}
  c(\hat{x})u(\hat{x})\leq -\frac{C M}{r_{\hat{x}}^{2s}}
\end{equation}
instead of \eqref{MP2-12}, where $M:=sup_{D}u(x)>0$. It follows that $c(\hat{x})\leq-\frac{C}{d(\Omega)^{2s}}$, which yields a contradiction immediately.
\end{rem}

\begin{rem}\label{rem2}
	Denote $D^+:=\{x\in D\mid u(x)>0\}$. Theorem \ref{32NRP} implies that, if $c(x)$ is bounded from below, then there exists $r_0>0$ and $0<\beta_0<1$ close to $1$ such that
	\begin{equation*}
	\text{if}\,\,x\in D \,\, \text{satisfying}\,\, u(x)\geq\beta_{0}\sup\limits_{D}u>0, \quad \text{then} \,\, dist(x,\partial D^+)>r_0.
	\end{equation*}
	This indicates that the ``almost" positive maximal points must be away from the boundary of $D^+$.
\end{rem}

The maximum principles in Lemma \ref{SMP-Ubdd}, Theorems \ref{MP-Ubdd} and \ref{32NRP} are also valid for $\mathbf{F}_{s}$.

\subsection{Monotonicity in Epigraph $E$}

Let the epigraph
$$E:=\left\{x=(x',x_n)\in\mathbb{R}^{n} \mid x_n>\varphi(x')\right\},$$
where $\varphi:\,\mathbb{R}^{n-1}\rightarrow\mathbb{R}$ is a continuous function. A typical example of epigraph $E$ is the upper half-space $\mathbb{R}^{n}_{+}$ ($\varphi\equiv0$).

By applying the maximum principles established in subsection 4.1, we can show the following monotonicity result on the epigraph $E$ via sliding method.
\begin{thm}\label{A}
	Let $u \in {\mathcal{L}_{s}}(\mathbb{R}^n)\cap C_{loc}^{1,1}(E)$ be a bounded solution of
	\begin{eqnarray}\label{332-PDE}
	\left\{\begin{array}{ll}
	-\mathrm{F}_{s} u(x)=f(u(x)), &x \in E,\\
	u(x)=0, & x \in \mathbb{R}^n\setminus E,
	\end{array}
	\right.
	\end{eqnarray}
	where $f(\cdot)$ is nonincreasing in the range of $u$.
	Assume that there exists $l>0$ such that
	\begin{equation}\label{332-con}
	u\geq0 \,\,\,\, \text{in} \,\, \left\{x=(x',x_n)\in E\,|\,\varphi(x')<x_{n}<\varphi(x')+l\right\}.
	\end{equation}
	Then, either $u\equiv0$ in $\mathbb{R}^{n}$ and $f(0)=0$, or $u$ is strictly monotone increasing in the $x_n$ direction and hence $u>0$ in $E$.
	
	If, in addition, $F_{s}$ in \eqref{332-PDE} is replaced by $\mathbf{F_{s}}$ and $E$ is contained in a half-space $\Sigma$ such that $\mathbf{e}\perp\partial\Sigma$, the same conclusion can be reached without the assumption \eqref{332-con}. Furthermore, if $E$ itself is exactly a half-space with $\mathbf{e}\perp\partial E$, then
	$$u(x)=u\left(\langle\left(x',x_{n}-\varphi(0')\right),{\bf \nu}\rangle\right),$$
	where ${\bf \nu}=\pm\mathbf{e}$ is the unit inner normal vector to the hyper-plane $\partial E$ and $\langle\cdot,\cdot\rangle$ denotes the inner product in Euclidean space. In particular, if $\mathbf{e}=\mathbf{e_{n}}$ and $E=\mathbb{R}^{n}_{+}$, then $u(x)=u(x_{n})$.
\end{thm}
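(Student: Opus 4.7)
The approach is the sliding method in the $x_n$ direction, built on the unbounded-domain maximum principles of subsection 4.1 together with the superadditivity of $\mathrm{F}_s$. Set $u_\tau(x) := u(x + \tau e_n)$ and $W_\tau(x) := u_\tau(x) - u(x)$; translation invariance of $\mathrm{F}_s$ (Lemma \ref{lem0}(a)) gives $-\mathrm{F}_s u_\tau = f(u_\tau)$, and Lemma \ref{lem0}(b) together with the nonincreasing property of $f$ yields, at any $x$ where $W_\tau(x) < 0$,
\[
\mathrm{F}_s W_\tau(x) \;\leq\; \mathrm{F}_s u_\tau(x) - \mathrm{F}_s u(x) \;=\; f(u(x)) - f(u_\tau(x)) \;\leq\; 0,
\]
the ``supersolution'' sign that drives the argument. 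For $\tau \in (0,l)$, the Dirichlet data $u \equiv 0$ outside $E$ and hypothesis \eqref{332-con} immediately give $W_\tau \geq 0$ on $\mathbb{R}^n \setminus E$: indeed $u_\tau(x) = u(x', x_n + \tau)$ is either zero (when $x_n + \tau \leq \varphi(x')$) or lies in the strip where \eqref{332-con} applies.

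The main step is to propagate $W_\tau \geq 0$ into $E$ for $\tau \in (0,l)$. I would argue by contradiction: suppose $m := \inf_E W_\tau < 0$, pick an almost-infimum sequence $x^k \in E$ with $W_\tau(x^k) \leq \beta_k m$, $\beta_k \to 1^-$, and introduce the cutoff from the proof of Theorem \ref{MP-Ubdd}, $\psi_k(x) := \psi((x - x^k)/r_{x^k})$, with $\epsilon_k := (1-\beta_k)|m|$. The auxiliary function $\widetilde{W}_k := W_\tau - \epsilon_k\psi_k$ then attains its global minimum at some $\bar{x}^k \in B_{r_{x^k}}(x^k) \subset E$ with $W_\tau(\bar{x}^k) < 0$. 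Mirroring the proof of Theorem \ref{MP-Ubdd} (with maximum replaced by minimum), integrating the $\inf$-kernel of $\mathrm{F}_s$ against an exterior set $\Gamma \subset \mathbb{R}^n \setminus E$ on which $W_\tau \geq 0$---here \eqref{domain} is applied to the epigraph $E$ with $r_x$ comparable to $\mathrm{dist}(x, \partial E)$---produces the lower bound $\mathrm{F}_s \widetilde{W}_k(\bar{x}^k) \geq C|m|/r_{x^k}^{2s}$, while decomposing $W_\tau = \widetilde{W}_k + \epsilon_k\psi_k$, applying Lemma \ref{lem0}(b) once more, and using the displayed inequality and $|\mathrm{F}_s\psi_k| \leq C_0/r_{x^k}^{2s}$ gives the matching upper bound $\mathrm{F}_s\widetilde{W}_k(\bar{x}^k) \leq C_0\epsilon_k/r_{x^k}^{2s}$. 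Together they force $C \leq C_0(1-\beta_k) \to 0$, a contradiction.

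Once $W_\tau \geq 0$ in $E$ for every $\tau \in (0,l)$, a telescoping iteration (writing any $\tau > 0$ as $\sum_{j=1}^N \mu_j$ with $\mu_j < l$ and noting that $x + (\mu_1 + \cdots + \mu_j) e_n \in E$ whenever $x \in E$) extends this to all $\tau > 0$. Strict monotonicity then comes from the strong maximum principle Lemma \ref{SMP-Ubdd}: if $W_\tau(x_0) = 0$ for some $\tau > 0$ and $x_0 \in E$, then $u_\tau(x_0) = u(x_0)$ so the displayed inequality gives $\mathrm{F}_s W_\tau(x_0) \leq 0$, and Lemma \ref{SMP-Ubdd} forces $W_\tau \equiv 0$ a.e., making $u$ periodic in $x_n$; coupled with $u \equiv 0$ on the nonempty set $\mathbb{R}^n \setminus E$ this forces $u \equiv 0$ in $\mathbb{R}^n$, whence $f(0) = 0$ from the equation. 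Otherwise $W_\tau > 0$ in $E$ for every $\tau > 0$, giving strict monotonicity and, since $u$ vanishes on $\partial E$, $u > 0$ in $E$.

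For the half-space refinements, when $E \subset \{x_n > c_0\}$ the hypothesis \eqref{332-con} can be dropped by initializing the sliding from $\tau = +\infty$ in the spirit of Theorem \ref{P:decay}: the $\mathcal{L}_s$-integrability of $u$ controls far-away contributions and forces $W_\tau \geq 0$ for large $\tau$, which one then decreases down to $0$ by the argument above. When $E$ is itself a half-space, applying the same sliding in every direction $e$ tangent to $\partial E$---and in $-e$---forces $W_{\tau e} \equiv 0$, so $u$ is invariant under tangential translations, hence $u(x) = u(\langle x - x_0, \nu\rangle)$. The main obstacle throughout is the nonlinearity of $\mathrm{F}_s$: the ``wrong-direction'' identity $\mathrm{F}_s(-v) = -\mathrm{F}_s v$ fails for the Bellman operator, so Theorem \ref{MP-Ubdd} cannot be applied directly to $-W_\tau$; the cutoff scheme above is the correct replacement, using only the one-sided subsolution-type bound on $\mathrm{F}_s W_\tau$ that \emph{is} available from superadditivity together with the monotonicity of $f$.
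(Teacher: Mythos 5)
Your main argument is correct and follows the paper's route: slide in the $x_n$ direction for $\tau\in(0,l)$, get $\mathrm{F}_s w^{\tau}\leq 0$ where $w^{\tau}<0$ from Lemma \ref{lem0}(b) and the monotonicity of $f$, get $w^{\tau}\geq 0$ off $E$ from \eqref{332-con} and the Dirichlet condition, then invoke an unbounded-domain maximum principle, and finally use Lemma \ref{SMP-Ubdd} at a touching point to obtain the dichotomy and strict monotonicity. Where you differ is in how the maximum principle is justified: the paper simply cites Theorem \ref{MP-Ubdd}, whereas you rerun its cutoff proof in the ``minimum'' form. Your observation that $\mathrm{F}_s(-v)\neq-\mathrm{F}_s v$ for the Bellman operator, so Theorem \ref{MP-Ubdd} cannot be applied verbatim to $-w^\tau$, is correct, and your mirrored argument is the right repair (at a global minimum every kernel integral over the exterior set is bounded below, so the infimum is too, and superadditivity gives the matching upper bound); this also quietly inherits the paper's implicit assumption that the epigraph complement satisfies \eqref{domain}. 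Your direct tangential sliding (in $e$ and $-e$) for the case when $E$ is exactly a half-space is also valid, and is if anything cleaner than the paper's limiting argument $\nu_n\to 0^+$.

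The genuine gap is in the claim that \eqref{332-con} can be dropped when $E$ is contained in a half-space by ``initializing the sliding from $\tau=+\infty$'' and using $\mathcal{L}_s$-integrability. This does not work: $u$ is only assumed bounded, so membership in $\mathcal{L}_s(\mathbb{R}^n)$ carries no decay information, and there is no mechanism forcing $u(x+\tau e_n)\geq u(x)$ for large $\tau$ before any sign information on $u$ inside $E$ is available (for $x\notin E$ with $x+\tau e_n\in E$ one needs precisely $u(x+\tau e_n)\geq 0$, which is what has to be proved). Moreover, the decay-at-infinity tools you appeal to (Theorem \ref{P:decay} and its refinement) are stated for functions anti-symmetric with respect to a hyperplane and their proofs use that reflection structure, so they do not apply to the translation difference $W_\tau$; and the continuation ``decrease $\tau$ down to $0$'' is not supplied. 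The paper's argument at this point is different and is what you need: after a rotation assume $E\subseteq\mathbb{R}^n_+$ and reflect across $T_0=\{x_n=0\}$, setting $w(x):=u(x)-u(x^{0})$, which is anti-symmetric, bounded from below, and equals $u$ on $\Sigma_0=\{x_n>0\}$ because $x^{0}\notin E$ forces $u(x^{0})=0$; at points of $\Sigma_0$ where $w<0$ one has $u<0$, hence $\mathrm{F}_s w(x)\leq \mathrm{F}_s u(x)-\mathrm{F}_s u(x^{0})=f(0)-f(u(x))\leq 0$ since $f$ is nonincreasing, and Theorem \ref{MP_anti_ubdd} (with $c\equiv 0$) gives $w\geq 0$ in $\Sigma_0$, i.e.\ $u\geq 0$ in $E$. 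Then \eqref{332-con} holds automatically and the first part of your proof applies. Without this reflection step (or an equivalent), your treatment of the ``$E$ contained in a half-space'' refinement is incomplete.
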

\begin{proof}
	For any $0<\tau<l$, let
	$$u^\tau(x):=u(x', x_n+\tau)$$
	and
	$$w^\tau(x):=u^\tau(x)-u(x).$$
	
	Since $f(\cdot)$ is nonincreasing, we have
	$$
	\mathrm{F}_{s}w^\tau(x)\leq \mathrm{F}_{s}u^\tau(x)-\mathrm{F}_{s} u(x)=-f(u^\tau(x))+f(u(x))\leq 0
	$$
	at points $x\in E$ where $w^\tau(x)<0$. In addition, for any $0<\tau<l$, we have
	$$w^\tau(x)\geq 0, \quad \forall \,\, x \in \mathbb{R}^n\setminus D.$$
	Thus it follows immediately from Theorem \ref{MP-Ubdd} that, for any $0<\tau<l$,
	$$w^\tau(x)\geq 0, \quad \forall \,\, x\in E.$$
	
	Now, suppose that $u\not\equiv0$ in $E$, then there exists a $\hat{x}\in E$ such that $u(\hat{x})>0$. We are to show that, for any $0<\tau<l$,
	\begin{equation}\label{332-1}
	w^\tau(x)>0, \quad \forall \,\, x \in E.
	\end{equation}
	If not, there exists a point $x^{\tau}\in E$ such that
	$$
	w^\tau(x^{\tau})=0=\min\limits_{\mathbb{R}^n}w^\tau(x).
	$$
	Then we have
	$$
	\mathrm{F}_{s}{w}^\tau(x^{\tau})\leq f(u(x^{\tau}))-f(u^\tau(x^{\tau}))=0,
	$$
	it follows immediately from Lemma \ref{SMP-Ubdd} that $w^\tau=0$ a.e. in $\mathbb{R}^{n}$. This contradicts $u(\hat{x})>0$ and $u=0$ in $\mathbb{R}^{n}\setminus E$. Therefore, \eqref{332-1} holds and hence $u$ is strictly monotone increasing in the $x_N$ direction. In particular, $u>0$ in $E$.

\medskip
	
	If, in addition, $F_{s}$ in \eqref{332-PDE} is replaced by $\mathbf{F_{s}}$ and $E$ is contained in a half-space $\Sigma$ with $\mathbf{e}\perp\partial E$, then the above sliding method is also valid for $\mathbf{F_{s}}$. We will prove that
$$u\geq 0\qquad \text{in}\,\,E$$
and hence the assumption \eqref{332-con} is redundant.
	
	Without loss of generalities, we may assume that $\mathbf{e}=\mathbf{e_{n}}=(0,\cdots,0,1)$ and $E\subseteq \mathbb{R}^n_+$, let
	\begin{equation}\label{332-2}
	T_{0}:=\left\{x \in \mathbb{R}^{n} | x_n=0\right\},
	\end{equation}
	\begin{equation}\label{332-3}
	\Sigma_{0}:=\left\{x \in \mathbb{R}^{n} | x_{n}>0\right\}
	\end{equation}
	be the region above the plane $T_0$, and
	$$
	x^{0}:=\left(x_{1}, x_{2}, \ldots, -x_{n}\right)
	$$
	be the reflection of $x$ about the plane $T_{0}$. We denote $u_{0}(x):=u\left(x^{0}\right)$ and $w_{0}(x)=u_{0}(x)-u(x)$. For $x\in \Sigma_{0}$ where $w_{0}(x)>0$, we derive from \eqref{332-PDE} that, $x\in E$ and
	\begin{equation*}
	\mathbf{F_{s}}{w}_0(x)\leq f(u_0(x))-f(u(x))\leq 0 \quad \mbox{ at points } x\in E \mbox{ where } w_0(x)<0.
	\end{equation*}
Hence, we obtain from Theorem \ref{MP_anti_ubdd} that $w_0\leq 0$ in $\Sigma_{0}$, which implies immediately $u\geq 0$ in $E$.
	
Furthermore, suppose $E$ itself is exactly a half-space with $\mathbf{e}\perp\partial E$. Without loss of generalities, we may assume that $\mathbf{e}=\mathbf{e_{n}}$ and $E=\mathbb{R}^n_+$. We will show that $u(x)$ depends on $x_n$ only.
	
	In fact, when $E=\mathbb{R}^n_+$, it can be seen from the above sliding procedure that the methods should still be valid if we replace $u^\tau(x):=u(x+\tau \mathbf{e_{n}})$ by $u(x+\tau\nu)$, where $\nu=(\nu_1,\cdots,\nu_n)$ is an arbitrary vector such that $\langle\nu,\mathbf{e_{n}}\rangle=\nu_{n}>0$. Applying similar sliding methods as above, we can derive that, for arbitrary such vector $\nu$,
	$$u(x+\tau\nu)>u(x) \quad \text{in} \,\, \mathbb{R}^n_+, \quad \forall \,\, \tau>0.$$
	Let $\nu_n\rightarrow0+$, from the continuity of $u$, we deduce that
	$$u(x+\tau\nu)\geq u(x)$$
	for arbitrary vector $\nu$ with $\nu_n=0$. By replacing $\nu$ by $-\nu$, we arrive at
	$$u(x+\tau\nu)=u(x)$$
	for arbitrary vector $\nu$ with $\nu_n=0$, this means that $u(x)$ is independent of $x'$, hence $u(x)=u(x_n)$. This finishes the proof of Theorem \ref{A}.
\end{proof}

\begin{rem}\label{rem0}
A typical form of nonlinearity $f(u)$ satisfying the assumption in Theorem \ref{A} is $f(u)=e^{\kappa u}$ with $\kappa\leq0$.
\end{rem}

\subsection{Asymptotic behavior}

As an application of Theorem \ref{MP-Ubdd} and Lemma \ref{SMP-Ubdd}, we can prove the following Lemma.
\begin{lem}\label{lem-ub}
Let $\Omega\subset\mathbb{R}^{n}$ be a domain, $u\in C_{\text {loc}}^{1,1}(\Omega)\cap\mathcal{L}_{s}(\mathbb{R}^{n})$ be a solution of
\begin{equation}\label{eqn0}
  -\mathrm{F}_{s}u(x)=f(u(x)), \quad \forall \,\, x\in\Omega
\end{equation}
such that $u$ is bounded from above and
$$u(x)=\phi(x)<\mu, \quad \forall \,\, x\in \mathbb{R}^{n}\setminus\Omega.$$
Assume $f$ satisfies the assumption:\\
{\textbf{$(H_1)$}} $f(t)>0$ on $(0,\mu)$, $f(\mu)=0$ and $f(t)\leq 0$ for $t\geq \mu$.
	
Then $u<\mu$ in $\Omega$.
\end{lem}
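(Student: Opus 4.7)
The plan is to recast the statement as a maximum-principle assertion for the auxiliary function $v:=u-\mu$ and then use the tools developed in Subsection~4.1 to rule out $v\geq 0$ inside $\Omega$. Note first that $\mathrm{F}_{s}$ annihilates additive constants, so $\mathrm{F}_{s}v=\mathrm{F}_{s}u$ pointwise. Since $\phi<\mu$ outside $\Omega$, we have $v\leq\phi-\mu<0$ on $\mathbb{R}^{n}\setminus\Omega$, and $v\in \mathcal{L}_{s}(\mathbb{R}^{n})\cap C^{1,1}_{\mathrm{loc}}(\Omega)$ is bounded from above. Hypothesis $(H_{1})$ says precisely that $f(u(x))\leq 0$ whenever $u(x)\geq\mu$, i.e.\ whenever $v(x)\geq 0$. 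Therefore, at any point $x\in\Omega$ with $v(x)>0$ one has $\mathrm{F}_{s}v(x)=\mathrm{F}_{s}u(x)=-f(u(x))\geq 0$, which is exactly the inequality needed to apply Theorem~\ref{MP-Ubdd} with $D=\Omega$ and $c(x)\equiv 0$. The conclusion of that theorem yields $v\leq 0$ in $\Omega$, i.e.\ $u\leq\mu$ in $\Omega$.

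To promote this to the strict inequality $u<\mu$, I would argue by contradiction: suppose there exists $x_{0}\in\Omega$ with $v(x_{0})=0$. Since $v\leq 0$ on all of $\mathbb{R}^{n}$ (using both the previous step and $v<0$ outside $\Omega$), $x_{0}$ is a global maximum point of $v$. The equation combined with $f(\mu)=0$ gives $\mathrm{F}_{s}v(x_{0})=\mathrm{F}_{s}u(x_{0})=-f(\mu)=0$. On the other hand, for every admissible matrix $A$ with $\theta I\leq A\leq\Theta I$, the integrand $v(y)/|A^{-1}(y-x_{0})|^{n+2s}$ is nonpositive, so every integral is $\leq 0$; the infimum over $A$ being equal to $0$ forces each of these integrals to vanish, and since the kernel is strictly positive, this forces $v\equiv 0$ a.e.\ in $\mathbb{R}^{n}$. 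But this contradicts $v=\phi-\mu<0$ on the positive-measure set $\mathbb{R}^{n}\setminus\Omega$. (Equivalently one could apply the dual of the strong maximum principle Lemma~\ref{SMP-Ubdd} to $-v\geq 0$ at the zero point $x_{0}$.) Hence no such $x_{0}$ exists, and $u<\mu$ strictly in $\Omega$.

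The main obstacle I anticipate lies in the first step: Theorem~\ref{MP-Ubdd} imposes the geometric condition \eqref{domain}, namely the existence of an infinite open domain $\Gamma\subset\overline{\Omega}^{c}$ with a uniform exterior density estimate. This condition is automatic for the cases relevant to Subsections~4.2--4.4 (half-spaces, epigraphs, and coercive epigraphs), since $\mathbb{R}^{n}\setminus E=\{x_{n}\leq\varphi(x')\}$ easily supplies such a $\Gamma$, but it would have to be verified (or hypothesized) before invoking Theorem~\ref{MP-Ubdd} on a general $\Omega$. If one prefers to avoid citing \eqref{domain}, an alternative route is to run the Silvestre-type cutoff/perturbation argument from the proof of Theorem~\ref{MP-Ubdd} directly on $v$, using the strict negativity of $v$ on $\mathbb{R}^{n}\setminus\Omega$ in place of the sign condition on $c(x)$ to generate the exterior contribution needed for the contradiction.
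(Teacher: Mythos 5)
Your proposal matches the paper's proof: the paper likewise sets $w=u-\mu$, observes that $-\mathrm{F}_{s}w=f(u)\leq 0$ at points of $\Omega$ where $u>\mu$, applies Theorem \ref{MP-Ubdd} with $c\equiv 0$ to get $u\leq\mu$, and then obtains strictness from the strong maximum principle Lemma \ref{SMP-Ubdd} --- your inline argument at a touching point $x_{0}$ is exactly the computation in that lemma, as you note. Your caveat about the density condition \eqref{domain} is fair, since the paper cites Theorem \ref{MP-Ubdd} for a general domain without verifying it; the condition is left implicit there and holds in the intended applications (epigraphs and half-spaces).
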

\begin{proof}
	We first show that $u\leq \mu$ in $\Omega$. To this end, define $w(x)=u(x)-\mu$, then $w$ is bounded from above and $w(x)<0$ in $\mathbb{R}^{n}\setminus\Omega$. From equation \eqref{eqn0}, we infer that, at points $x\in\Omega$ where $u(x)>\mu$,
	\begin{equation}
	-\mathrm{F}_{s} w(x)=f(u(x))\leq 0.
	\end{equation}
	It follows from Theorem \ref{MP-Ubdd} that $w(x)\leq 0$ in $\Omega$. Thus we arrive at $u\leq\mu$ in $\Omega$.
	
	Furthermore, by strong maximum principle Lemma \ref{SMP-Ubdd}, we conclude that $u<\mu$ in $\Omega$. This finishes the proof of Lemma \ref{lem-ub}.
\end{proof}

Now we consider the following equation
\begin{equation}\label{MP-13}
\begin{cases}-\mathrm{F}_{s} u(x)=f(u(x)), \quad u(x)>0, \quad \forall \,\, x\in E,\\ u(x)=\phi(x)\geq0, \quad \forall \,\, x\in\mathbb{R}^{n}\setminus E,\end{cases}
\end{equation}
where $E:=\left\{x=(x',x_n)\in\mathbb{R}^{n} \mid x_n>\varphi(x')\right\}$ is the epigraph and $\varphi:\,\mathbb{R}^{n-1}\rightarrow\mathbb{R}$ is a continuous function.

To derive the asymptotic behavior of solutions to \eqref{MP-13}, we need the following hypotheses on $f$: \\
{\textbf{$(H_2)$}} $f(t)\geq c_{0}t$ on $[0,\delta_0]$ for some small $c_{0}>0$ and $\delta_0>0$. \\
{\textbf{$(H_3)$}} $f(t)$ is nonincreasing on $(\mu-\delta_1,\mu)$ for some small $0<\delta_1<\mu$.

\smallskip

We first prove the following Lemma by sliding method, which indicates that the solution of \eqref{MP-13} is bounded away from zero at points far away from the boundary.
\begin{lem}\label{lem-as}
	Suppose that $u\in C_{\text {loc}}^{1,1}(E)\cap\mathcal{L}_{s}(\mathbb{R}^{n})$ is a solution of \eqref{MP-13} and $f(\cdot)$ satisfies $(H_2)$. Then, there exist $0<\varepsilon_0<\mu$ and $M_0>0$ large such that
	\begin{equation}\label{ub-24}
	u(x)>\varepsilon_0, \qquad \forall \,\, x\in E, \,\,  dist(x, \partial E)>M_0.
	\end{equation}
\end{lem}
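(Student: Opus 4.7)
The plan is to argue by contradiction via a sliding argument whose sliding function is a principal Dirichlet eigenfunction of $-\mathrm{F}_{s}$ on a large ball. Supposing that no such $\varepsilon_{0}$ and $M_{0}$ exist, one produces a sequence $\{y^{k}\}\subset E$ with $d_{k}:=\mathrm{dist}(y^{k},\partial E)\to+\infty$ and $u(y^{k})\to 0$. For each $R>0$, let $\phi_{R}$ denote the positive principal Dirichlet eigenfunction of $-\mathrm{F}_{s}$ on $B_{R}(0)$ (extended by $0$ outside), with eigenvalue $\lambda_{R}>0$; existence of such an eigenpair for uniformly elliptic nonlocal Bellman operators is standard, and the rotational invariance in Lemma \ref{lem0}(a) allows one to take $\phi_{R}$ radial and normalized so that $\phi_{R}(0)=\|\phi_{R}\|_{L^{\infty}(\mathbb{R}^{n})}=1$. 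The direct scaling identity $\mathrm{F}_{s}(u(\cdot/R))(x)=R^{-2s}(\mathrm{F}_{s}u)(x/R)$, which is read off from the definition \eqref{Def}, gives $\lambda_{R}=R^{-2s}\lambda_{1}\to 0^{+}$ as $R\to+\infty$, so one fixes $R$ once and for all so large that $\lambda_{R}<c_{0}$, where $c_{0}$ is the constant in hypothesis $(H_{2})$.

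For every $k$ with $d_{k}>R$ the ball $\overline{B_{R}(y^{k})}$ sits strictly inside $E$ and $u>0$ there, so one may consider the family $v_{t}(x):=t\,\phi_{R}(x-y^{k})$, $t>0$. Since $v_{t}$ vanishes outside $B_{R}(y^{k})$, the inequality $v_{t}\leq u$ holds on all of $\mathbb{R}^{n}$ for every sufficiently small $t$, and one sets $\bar{t}_{k}:=\sup\{t>0 \mid v_{t}\leq u \text{ in } \mathbb{R}^{n}\}$. If $\bar{t}_{k}\geq \delta_{0}$, one immediately obtains $u(y^{k})\geq v_{\bar{t}_{k}}(y^{k})=\bar{t}_{k}\phi_{R}(0)\geq\delta_{0}$, which contradicts $u(y^{k})\to 0$ for $k$ large; therefore $\bar{t}_{k}<\delta_{0}$ for all such $k$. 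By maximality of $\bar{t}_{k}$ there is a contact point $x^{k}$, which must lie in the open ball $B_{R}(y^{k})$ since $v_{\bar{t}_{k}}$ vanishes outside $B_{R}(y^{k})$ while $u$ is strictly positive on the compact set $\overline{B_{R}(y^{k})}$. At $x^{k}$ one has $u(x^{k})=\bar{t}_{k}\phi_{R}(x^{k}-y^{k})\leq\bar{t}_{k}<\delta_{0}$, so hypothesis $(H_{2})$ combined with the equation in \eqref{MP-13} gives $-\mathrm{F}_{s}u(x^{k})=f(u(x^{k}))\geq c_{0}u(x^{k})$, while the eigenvalue equation gives $-\mathrm{F}_{s}v_{\bar{t}_{k}}(x^{k})=\lambda_{R}v_{\bar{t}_{k}}(x^{k})=\lambda_{R}u(x^{k})$. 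Setting $w_{k}:=u-v_{\bar{t}_{k}}\geq 0$ with global minimum $w_{k}(x^{k})=0$, and invoking the superadditivity in Lemma \ref{lem0}(b), one arrives at
\begin{equation*}
\mathrm{F}_{s}w_{k}(x^{k})\leq \mathrm{F}_{s}u(x^{k})-\mathrm{F}_{s}v_{\bar{t}_{k}}(x^{k})\leq(\lambda_{R}-c_{0})u(x^{k})<0,
\end{equation*}
whereas the same calculation used in the proof of Lemma \ref{SMP-Ubdd} forces $\mathrm{F}_{s}w_{k}(x^{k})\geq 0$. This contradiction rules out the remaining case, and the desired conclusion follows with $M_{0}:=R$ and, for instance, $\varepsilon_{0}:=\delta_{0}/2$.

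The main obstacle in carrying out this plan is the existence of a principal eigenpair $(\lambda_{R},\phi_{R})$ for $-\mathrm{F}_{s}$ on $B_{R}(0)$ having the three features that are actually used: strict positivity in $B_{R}(0)$, radiality with $\phi_{R}(0)=\|\phi_{R}\|_{L^{\infty}}$, and the sharp scaling $\lambda_{R}=R^{-2s}\lambda_{1}$. For the fractional Laplacian these are classical; for uniformly elliptic fully nonlinear nonlocal operators of Bellman type they follow from the nonlocal extensions of the Berestycki--Nirenberg--Varadhan / Ishii--Yoshimura principal eigenvalue theory. If one wishes to avoid invoking this ingredient, a workable alternative is to replace $\phi_{R}$ by an explicit radial subsolution of $-\mathrm{F}_{s}v\leq\lambda v$ with $\lambda<c_{0}$ on $B_{R}(0)$, built from the fractional torsion function $(R^{2}-|x|^{2})^{s}_{+}$ together with the comparison \eqref{comparison}; this route is more technical but poses no conceptual difficulty.
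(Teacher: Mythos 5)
Your proof is correct, and it rests on exactly the same key ingredient as the paper's proof -- the principal Dirichlet eigenpair of $-\mathrm{F}_{s}$ on a ball (whose existence the paper takes from Biswas \cite{Bi}), rescaled via the identity $\mathrm{F}_{s}(u(\cdot/R))(x)=R^{-2s}(\mathrm{F}_{s}u)(x/R)$ so that the eigenvalue drops below the constant $c_{0}$ of $(H_{2})$ -- but you deploy it along a genuinely different route. The paper fixes $M_{0}=(\lambda_{1}/c_{0})^{1/(2s)}$, fixes a base point $y_{0}$ deep in $E$, sets $\varepsilon_{0}:=\min\{\delta_{0},\frac{1}{2}\inf_{B_{M_{0}}(y_{0})}u\}$, and then \emph{slides the comparison function in space} along a curve joining $y_{0}$ to an arbitrary deep point, deriving a contradiction at the first touching time from the superadditivity of $\mathrm{F}_{s}$ (Lemma \ref{lem0}(b)), the subsolution inequality coming from $(H_{2})$, and the sign of $\mathrm{F}_{s}$ at a global minimum. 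You instead \emph{slide in amplitude} at each deep point separately: raise $t\phi_{R}(\cdot-y)$ until it touches $u$ from below, and rule out a touching at level below $\delta_{0}$ by the same three ingredients (superadditivity, $(H_{2})$ with $\lambda_{R}<c_{0}$, and the minimum-point computation of Lemma \ref{SMP-Ubdd}). What your version buys: the lower bound $u\geq\delta_{0}$ at depth $>R$ is explicit and independent of the particular solution, whereas the paper's $\varepsilon_{0}$ depends on $\inf_{B_{M_{0}}(y_{0})}u$; and you need no connectivity of the set $\{x\in E:\,\mathrm{dist}(x,\partial E)>M_{0}\}$, which the paper uses implicitly when it links $y_{0}$ to $y$ by a curve (true for epigraphs, but unremarked). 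Both arguments share the same unproved inputs -- positivity, normalization and enough interior regularity of the eigenfunction to evaluate $\mathrm{F}_{s}$ pointwise at the touching point -- so you are on equal footing with the paper there. Three minor remarks: the contradiction-sequence framing is superfluous, since your Case 2 is excluded for every point at depth $>R$ and the argument directly yields $u\geq\delta_{0}$ there; the choice $\varepsilon_{0}=\delta_{0}/2$ satisfies $\varepsilon_{0}<\mu$ only when, say, $(H_{1})$ also holds (forcing $\delta_{0}<\mu$), so $\varepsilon_{0}=\frac{1}{2}\min\{\delta_{0},\mu\}$ is safer (the paper's own choice has the same feature); and radiality of $\phi_{R}$ with its maximum at the center is not actually needed -- $\phi_{R}>0$ in the ball with $\sup\phi_{R}=1$ suffices, at the cost of replacing $\delta_{0}$ by $\delta_{0}\phi_{R}(0)$.
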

\begin{proof}
Let $\lambda_1$ be the first eigenvalue of $-\mathrm{F}_{s}$ in $B_1(0)$. Assume $\psi$ is the corresponding eigenfunction satisfying $\psi(0)=\max\limits_{B_1(0)}\psi=1$ and
	\begin{equation}\label{ub-25}
	\begin{cases} -\mathrm{F}_{s} \psi=\lambda_1\psi, \,\,\,\, \psi>0, \qquad \text{in} \,\, B_1(0),\\ \\
	\psi=0, \qquad \text{in} \,\, \mathbb{R}^{n}\setminus B_1(0).\end{cases}
	\end{equation}
For eigenvalue and eigenfunction to $\mathrm{F}_{s}$ and general nonlinear integro-differential operators, please refer to Biswas \cite{Bi} and the references therein.
	
	By hypothesis $(H_2)$,  for any $0<\varepsilon\leq\delta_0$ and $M_0:=\left(\frac{\lambda_1}{c_0}\right)^{\frac{1}{2s}}$, we have
	\begin{align}\label{ub-26}
	-\mathrm{F}_{s}\left(\varepsilon\psi\left(\frac{x}{M_{0}}\right)\right)&=\frac{\lambda_1}{M_{0}^{2s}}\varepsilon\psi\left(\frac{x}{M_{0}}\right)\nonumber\\
	&=c_0\varepsilon\psi\left(\frac{x}{M_{0}}\right)\\
	&\leq f\left(\varepsilon\psi\left(\frac{x}{M_{0}}\right)\right).\nonumber
	\end{align}
	
For an arbitrarily fixed point $y_0\in E$ with $dist(y_0,\partial E)>M_0$, set
\begin{equation}\label{eq-a1}
  \varepsilon_0:=\min\left\{\delta_{0},\frac{1}{2}\inf\limits_{B_{M_0}(y_0)}u\right\}>0.
\end{equation}
Then, we have
	\begin{equation}\label{ub-27}
	u(x)>\varepsilon_0\psi\left(\frac{x-y_0}{M_{0}}\right) \qquad\text{in}\,\,B_{M_0}(y_0).
	\end{equation}
	For any other $y\in E$ with $dist(y,\partial E)>M_0$, we can link $y_0$ and $y$ by a smooth curve $y(t):[0,1]\rightarrow \{x\in E\mid dist(x,\partial E)>M_0\}$ with $y(0)=y_0$ and $y(1)=y$.
	Denote $$v_t(x):=u(x)-\varepsilon_0\psi\left(\frac{x-y(t)}{M_0}\right).$$
	It follows from \eqref{ub-27} that $v_0(x)>0$ for any $x\in B_{M_0}(y(0))$. We intend to prove, for all $t\in[0,1]$,
	\begin{equation}\label{ub-28}
	v_t(x)>0, \quad \forall\,x\in B_{M_0}(y(t)).
	\end{equation}
	Suppose not, let $0<t_\ast<1$ be the smallest $t$ such that \eqref{ub-28} fails. Then, we must have $v_{t_\ast}\geq 0$ in $B_{M_0}(y(t_\ast))$ and there is some point $x^\ast\in B_{M_0}(y(t_\ast))$ such that $v_{t_\ast}(x^{\ast})=0$. On the one hand, we deduce from \eqref{MP-13} and \eqref{ub-26} that
	\begin{equation*}
	-\mathrm{F}_{s} v_{t_\ast}(x^\ast)\geq f(u(x^\ast))-f\left(\varepsilon_0\psi\left(\frac{x^\ast-y(t_{\ast})}{M_{0}}\right)\right)=0.
	\end{equation*}
	However, on the other hand, direct calculation shows
	\begin{align*}
	-\mathrm{F}_{s} v_{t_\ast}(x^\ast)=\inf \, P.V.\int_{\mathbb{R}^n}\frac{-v_{t_\ast}(y)}{|A^{-1}(x^\ast-y)|^{n+2s}}\mathrm{d}y<0.
	\end{align*}
	This is a contradiction! Hence, we have \eqref{ub-28} holds. In particular, for $t=1$ and $x=y$, \eqref{ub-28} gives
	$$u(y)>\varepsilon_0.$$
	Since $y\in E$ with $dist(y,\partial E)>M_0$ is arbitrary, we concludes the proof of Lemma \ref{lem-as}.
\end{proof}

Now, with the help of Lemmas \ref{lem-ub} and \ref{lem-as}, we can prove the following asymptotic property of solution $u(x)$ to \eqref{MP-13} when $x$ is far away from $\partial E$.

\begin{thm}\label{thm-ub}
Assume $u\in C_{\text {loc}}^{1,1}(E)\cap\mathcal{L}_{s}(\mathbb{R}^{n})$ is a solution of \eqref{MP-13} such that $u$ is bounded from above and $u(x)=\phi(x)<\mu$ for any $x\in\mathbb{R}^{n}\setminus E$. Suppose $f(\cdot)$ is continuous and satisfies assumptions $(H_1)$, $(H_2)$ and $(H_3)$. Then, $u(x)\rightarrow \mu$ in $E$ as $dist(x,\partial E)\rightarrow +\infty$.
\end{thm}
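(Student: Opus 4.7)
I will argue by contradiction via a translation/compactness argument, reducing the statement to a minimum-point computation on the entire space $\mathbb{R}^{n}$. Define
\[
M := \liminf_{\substack{x \in E \\ \mathrm{dist}(x,\partial E)\to +\infty}} u(x).
\]
Lemma \ref{lem-as} gives $M \geq \varepsilon_{0} > 0$, and Lemma \ref{lem-ub} gives $u < \mu$ in $E$, hence $\limsup_{\mathrm{dist}(x,\partial E)\to +\infty} u(x) \leq \mu$. Thus it suffices to prove $M = \mu$. Assume for contradiction $M < \mu$, and choose a sequence $x_{k} \in E$ with $d_{k} := \mathrm{dist}(x_{k},\partial E) \to +\infty$ and $u(x_{k}) \to M$.

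Translate by setting $u_{k}(x) := u(x + x_{k})$, which solves $-\mathrm{F}_{s} u_{k} = f(u_{k})$ on $E_{k} := E - x_{k}$; for any fixed $R > 0$, $B_{R}(0) \subset E_{k}$ for $k$ large. Because $u$ is globally bounded and $f$ is continuous, $\{u_{k}\}$ and $\{f(u_{k})\}$ are uniformly bounded. Invoking interior regularity for $\mathrm{F}_{s}$ (Caffarelli--Silvestre--type estimates for fully nonlinear integro-differential operators, \cite{CS2,CS3}), one obtains uniform $C^{1,1}_{\mathrm{loc}}$ bounds on $\{u_{k}\}$ (or $C^{1,\alpha}_{\mathrm{loc}}$ with $\alpha > 2s-1$, which is already enough to make the integrals below converge classically). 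A diagonal extraction produces a subsequence converging locally uniformly to some $u_{\infty} \in L^{\infty}(\mathbb{R}^{n}) \cap \mathcal{L}_{s}(\mathbb{R}^{n})$; the uniform tail bound coming from the global $L^{\infty}$ bound on $u$ allows passage to the limit in the nonlocal equation, so that
\[
-\mathrm{F}_{s} u_{\infty}(x) = f(u_{\infty}(x)) \qquad \text{in } \mathbb{R}^{n}.
\]

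By the definition of $M$, for every $\varepsilon > 0$ there exists $R_{\varepsilon} > 0$ such that $u(x) > M - \varepsilon$ whenever $\mathrm{dist}(x,\partial E) > R_{\varepsilon}$. For a fixed $y$ and $k$ large, $\mathrm{dist}(y+x_{k},\partial E) \geq d_{k} - |y| > R_{\varepsilon}$, so $u_{k}(y) > M - \varepsilon$; letting $k \to \infty$ and then $\varepsilon \to 0^{+}$ yields $u_{\infty} \geq M$ on $\mathbb{R}^{n}$, while $u_{\infty}(0) = \lim u_{k}(0) = M$. Thus $u_{\infty}$ attains its global minimum at the origin. Using the symmetrized form of $\mathrm{F}_{s}$ in \eqref{Def} at the origin, the integrand $u_{\infty}(y) + u_{\infty}(-y) - 2M$ is nonnegative pointwise, so for every admissible $A$ the inner integral is nonnegative; taking the infimum gives $\mathrm{F}_{s} u_{\infty}(0) \geq 0$. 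Combined with the equation, $f(M) = -\mathrm{F}_{s} u_{\infty}(0) \leq 0$. But $M \in [\varepsilon_{0},\mu)$ and hypothesis $(H_{1})$ force $f(M) > 0$, a contradiction. Therefore $M = \mu$, and the claimed limit $u(x) \to \mu$ as $\mathrm{dist}(x,\partial E) \to +\infty$ follows.

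The main technical step is the compactness and stable passage to the limit in the nonlocal equation, which one should cite from the regularity theory for Bellman-type integro-differential operators; the minimum-point argument itself uses only the definition of $\mathrm{F}_{s}$ and hypothesis $(H_{1})$. Hypothesis $(H_{3})$ is not needed for this asymptotic statement and is presumably in play for companion monotonicity or uniqueness results in the same subsection.
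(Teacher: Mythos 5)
Your argument is essentially sound but follows a genuinely different route from the paper. The paper's proof is elementary and local: it compares $u$ with rescaled first eigenfunctions $\varepsilon\,\psi\big(\tfrac{2(x-y)}{d_y}\big)$ of $-\mathrm{F}_{s}$ in balls $B_{d_y/2}(y)$, slides them up until a touching point $x_0$ appears, and reads off from the equation that $f(u(x_0))\le \eta_\ast\lambda_1(2/d_y)^{2s}$ is small; then $(H_1)$--$(H_2)$ force $u(x_0)>\mu-\delta_1$, and $(H_3)$ (monotonicity of $f$ near $\mu$) transfers this smallness to $f(u(y))$, giving $u(y)\to\mu$ with no compactness or regularity theory at all. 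Your proof instead translates along a minimizing sequence, extracts a limit solving the equation on all of $\mathbb{R}^{n}$, and evaluates at a global minimum; this buys a cleaner conceptual statement and, as you note, dispenses with $(H_3)$ (your final step only needs $\liminf u=\mu$ together with $u<\mu$ from Lemma \ref{lem-ub}), whereas the paper's proof genuinely uses $(H_3)$. The reduction to $M\ge\varepsilon_0$ via Lemma \ref{lem-as}, the lower bound $u_\infty\ge M$ with $u_\infty(0)=M$, and the sign computation $\mathrm{F}_{s}u_\infty(0)\ge 0$ versus $f(M)>0$ from $(H_1)$ are all correct.

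The one place where you over-claim is the regularity/compactness step. Under the standing hypotheses $f$ is only continuous (not Lipschitz or H\"older), so the references \cite{CS2,CS3} give uniform interior $C^{\gamma}$ (Krylov--Safonov type) estimates with a small universal $\gamma$, but not uniform $C^{1,1}_{loc}$ bounds, and for $s\ge 1/2$ not obviously $C^{1,\alpha}$ with $\alpha>2s-1$ either (Evans--Krylov type $C^{2s+\alpha}$ estimates for the concave operator $\mathrm{F}_{s}$ ordinarily require a H\"older right-hand side). Consequently you cannot directly assert that the limit equation holds \emph{classically} at the origin, which is what the step ``combined with the equation, $f(M)=-\mathrm{F}_{s}u_\infty(0)\le 0$'' uses; the inequality $\mathrm{F}_{s}u_\infty(0)\ge 0$ is fine in $[0,+\infty]$, but equating it with $-f(M)$ needs the pointwise equation. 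This is repairable within your scheme: uniform $C^{\gamma}$ estimates plus the global $L^{\infty}$ tail bound give locally uniform convergence and stability in the viscosity sense, and at the global minimum one tests $u_\infty$ from below by the constant $M$; the glued test function has nonnegative second differences in \eqref{Def}, so the viscosity supersolution inequality yields $f(M)\le 0$, and the contradiction with $(H_1)$ goes through. With that adjustment your proof is complete, at the price of invoking regularity and viscosity-stability machinery that the paper's eigenfunction comparison avoids entirely.
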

\begin{proof}
From Lemma \ref{lem-ub}, we know that $0<u<\mu$ in $E$. Let $\psi$ denote the eigenfunction associated with the first eigenvalue $\lambda_{1}$ of $-\mathrm{F}_{s}$ in $B_{1}(0)$ as in the proof of Lemma \ref{lem-as}.
	
	By the hypothesis $(H_1)$ and the continuity of $f$, one has, there exists $c_1>0$ small such that
\begin{equation}\label{eq-a0}
  f(t)\geq c_1,\qquad \forall\,\, t\in [\varepsilon_0, \mu-\delta_1].
\end{equation}
For any $y\in E$ with $d_{y}:=dist(y, \partial E)$ large enough such that $d_{y}>2M_0$ and $\left(\frac{2}{d_{y}}\right)^{2s}<\frac{c_1}{\lambda_1\mu}$, by Lemma \ref{lem-as}, we have
$$u(x)>\varepsilon_0 \qquad\text{in} \,\, B_{\frac{d_{y}}{2}}(y).$$
	Set $\psi^y(x):=\psi\left(\frac{2(x-y)}{d_{y}}\right)$, then
	\begin{equation}\label{ub-31}
	-\mathrm{F}_{s}\psi^y(x)\leq \lambda_1\left(\frac{2}{d_{y}}\right)^{2s}, \qquad \forall \,\, x\in B_{\frac{d_{y}}{2}}(y).
	\end{equation}
	
	Since $\max\limits_{x\in B_{\frac{d_{y}}{2}}(y)}\psi^y(x)=\psi^y(y)=1$, we have, for $0<\eta\leq\varepsilon_0$,
$$\eta \psi^y(x)<u(x), \qquad \forall \,\, x\in E.$$
	Let
$$\eta_\ast:=\sup\{\eta\mid \eta\psi^y(x)<u(x) \,\,\,\, \text{in} \,\,E\}$$
be the least $\eta$ such that $\eta\psi^y$ touches $u$ in $E$. Since $u<\mu$, $\varepsilon_0\leq \eta_{\ast}<\mu$ is well-defined.
	
	By the definition of $\eta_\ast$, we have $u(x)\geq\eta_\ast\psi^y(x)$ in $\mathbb{R}^n$ and there exists a point $x_0\in B_{\frac{d_{y}}{2}}(y)$ such that $u(x_0)=\eta_\ast\psi^y(x_0)$. This means $x_0$ is the minimum point of the function $u(x)-\eta_\ast\psi^y(x)$, hence we obtain
$$-\mathrm{F}_{s}(u-\eta_\ast\psi^{y})(x_0)\leq 0,$$
	which combined with \eqref{ub-31}, implies that
	\begin{equation}\label{ub-32}
	-\mathrm{F}_{s}u(x_0)=f(u(x_{0}))\leq \eta_\ast\lambda_1\left(\frac{2}{d_{y}}\right)^{2s}<\eta_\ast\lambda_1\frac{c_1}{\lambda_1\mu}<c_1.
	\end{equation}
At the same time, we can conclude that
	\begin{equation}\label{ub-31-1}
	\varepsilon_0<u(x_0)=\eta_\ast\psi^y(x_0)\leq\eta_\ast\psi^y(y)\leq u(y)<\mu.
	\end{equation}
	Combining \eqref{eq-a0}, \eqref{ub-32} with \eqref{ub-31-1} yields that $\mu-\delta_{1}<u(x_0)\leq u(y)<\mu$. Then, we can deduce from $(H_3)$ and \eqref{ub-32} that
	\begin{equation}\label{ub-33}
	-\mathrm{F}_{s}u(y)=f(u(y))\leq f(u(x_{0}))\leq \eta_\ast\lambda_1\left(\frac{2}{d_{y}}\right)^{2s}.
	\end{equation}
	Therefore, we have $\mu-\delta_{1}<u(y)<\mu$ for any $y\in E$ with $d_{y}:=dist(y,\partial E)$ sufficiently large, and $f(u(y))\rightarrow 0$ as $dist(y, \partial E) \rightarrow+\infty$. Combining this with the hypothesis $(H_3)$ implies immediately that
	\begin{equation*}
	\lim\limits_{dist(x, \partial E) \rightarrow+\infty}u(x)=\mu,
	\end{equation*}
	which concludes the proof of Theorem \ref{thm-ub}.
\end{proof}

\begin{rem}\label{rem17}
If $\mu=1$, then the De Giorgi type nonlinearity $f(u)=u-u^{3}$ satisfies all the assumptions $(H_1)$, $(H_2)$ and $(H_3)$ in Theorem \ref{thm-ub} and Lemmas \ref{lem-ub} and \ref{lem-as}.
\end{rem}

Since the maximum principles in Lemma \ref{SMP-Ubdd}, Theorems \ref{MP-Ubdd} and \ref{32NRP} are valid for $\mathbf{F}_{s}$, we can apply the sliding method (on general unbounded domains) to $\mathbf{F}_{s}$ and hence the results in Theorem \ref{A}, Lemmas \ref{lem-ub} and \ref{lem-as} and Theorem \ref{thm-ub} are still valid for $\mathbf{F}_{s}$.

\subsection{Monotonicity in $\mathbb{R}^n_+$}
Without loss of generalities, we assume $\mathbf{e}=\mathbf{e_{n}}$ and consider the following Dirichlet problem on half-space $\mathbb{R}^{n}_{+}$:
\begin{equation}\label{ub-34}
\begin{cases}-\mathbf{F_{s}} u(x)=f(u(x)), \quad \forall \,\, x\in \mathbb{R}^n_+,\\ u(x)=0, \quad \forall \,\, x\in \mathbb{R}^{n}\setminus\mathbb{R}^n_{+},\end{cases}
\end{equation}
where $0<s<1$ and $n\geq1$.

By using the maximum principles established in Section 2 and subsection 4.1, we will prove the following monotonicity result for \eqref{ub-34} via the method of moving planes for $\mathbf{F_{s}}$.
\begin{thm}\label{M-half}
	Let $u \in {\mathcal{L}_{s}}(\mathbb{R}^n)\cap C_{loc}^{1,1}(\mathbb{R}^n_+)$ be a nonnegative nontrivial bounded solution of \eqref{ub-34}. Assume that $f(\cdot)$ is Lipschitz in the range of $u$ and satisfies either $f(0)\not=0$ or  $(H_2)$.
	
	Then, $u$ is strictly monotone increasing in the $x_n$ direction and hence $u>0$ in $\mathbb{R}^n_+$.
\end{thm}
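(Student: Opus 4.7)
The plan is to implement the method of moving planes in the $x_n$-direction. For $\lambda>0$ set $T_\lambda=\{x_n=\lambda\}$, $\Sigma_\lambda=\{x_n<\lambda\}$, $x^\lambda=(x',2\lambda-x_n)$, $u_\lambda(x)=u(x^\lambda)$, $w_\lambda=u_\lambda-u$, and aim to show $w_\lambda\ge 0$ in $\Sigma_\lambda$ for every $\lambda>0$; strict monotonicity in $x_n$ then follows from Lemma \ref{SMP-anti} together with the nontriviality of $u$. In $\Sigma_\lambda\setminus\mathbb{R}^n_+=\{x_n\le 0\}$ one has $w_\lambda\ge 0$ automatically since $u=0$ there while $u_\lambda\ge 0$, and at points of $\Sigma_\lambda\cap\mathbb{R}^n_+$ where $w_\lambda<0$ the Lipschitz property of $f$ gives $\mathrm{F}_s w_\lambda-c_\lambda w_\lambda\le 0$ with $c_\lambda$ uniformly bounded by the Lipschitz constant.

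First I would start the procedure: for $\lambda>0$ sufficiently small the slab $\{0<x_n<\lambda\}$ is arbitrarily narrow and $w_\lambda$ is bounded because $u$ is, so Theorem \ref{NRP-Anti-ubdd} (narrow region principle in unbounded open sets) applies and yields $w_\lambda\ge 0$ in $\Sigma_\lambda$. Define
\[
\lambda_0:=\sup\{\lambda>0:\ w_\mu\ge 0\ \text{in}\ \Sigma_\mu\ \text{for every}\ 0<\mu\le\lambda\},
\]
so by continuity $w_{\lambda_0}\ge 0$ in $\Sigma_{\lambda_0}$ whenever $\lambda_0<+\infty$, and it suffices to prove $\lambda_0=+\infty$.

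Assume for contradiction that $\lambda_0<+\infty$. Lemma \ref{SMP-anti} leaves two alternatives. If $w_{\lambda_0}\equiv 0$ a.e., then $u$ is symmetric across $T_{\lambda_0}$, which combined with $u=0$ outside $\mathbb{R}^n_+$ forces $u\equiv 0$ on $\{x_n>2\lambda_0\}$; the hypotheses then close the argument. Specifically, if $f(0)\ne 0$ then at any $x$ with large $x_n$ one would have $-\mathrm{F}_s u(x)=f(0)$, whereas $\mathrm{F}_s u(x)=\inf_A\int u(y)|A^{-1}(x-y)|^{-n-2s}dy\to 0$ as $x_n\to+\infty$ (the support of $u$ being trapped in the bounded strip $\{0<y_n<2\lambda_0\}$), forcing $f(0)=0$; under $(H_2)$, Lemma \ref{lem-as} supplies $u\ge\varepsilon_0$ on $\{x_n>M_0\}$, which clashes with $u\equiv 0$ on $\{x_n>2\lambda_0\}$ at any $x_n>\max(M_0,2\lambda_0)$.

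The remaining case is $w_{\lambda_0}>0$ throughout $\Sigma_{\lambda_0}\cap\mathbb{R}^n_+$, and the task is to slide the plane slightly past $\lambda_0$ to contradict its definition. Fixing a small $\delta>0$ I would decompose $\Sigma_\lambda\cap\mathbb{R}^n_+$ for $\lambda\in[\lambda_0,\lambda_0+\varepsilon]$ into the bulk $A=\{0<x_n\le\lambda_0-\delta\}$ and the narrow strip $B_\lambda=\{\lambda_0-\delta<x_n<\lambda\}$: a second application of Theorem \ref{NRP-Anti-ubdd} on $B_\lambda$ closes the step once $w_\lambda\ge 0$ is secured on $A$. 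On bounded subsets of $A$ this follows from the local continuity $w_\lambda\to w_{\lambda_0}$ as $\lambda\downarrow\lambda_0$ (a consequence of $u\in C^{1,1}_{\text{loc}}$) together with the strict inequality $w_{\lambda_0}>0$ there. I expect the main obstacle to be the tail of $A$ in the $x'$-direction, where no uniform positive lower bound on $w_{\lambda_0}$ is available and local continuity alone cannot rule out a counterexample sequence $x^k$ with $|(x^k)'|\to+\infty$. This should be handled by a translation-compactness argument: by translation invariance of $\mathrm{F}_s$ (Lemma \ref{lem0}(a)) any translate $u^k(x):=u(x+((x^k)',0))$ solves the same problem \eqref{ub-34} with the same bounds, so uniform local regularity for uniformly elliptic integro-differential operators produces a subsequential limit $u^*$ in $C^{1,\alpha}_{\text{loc}}$ still solving \eqref{ub-34} on $\mathbb{R}^n_+$, for which the corresponding $w^*_{\lambda_0}$ attains an interior zero; Lemma \ref{SMP-anti} then forces $w^*_{\lambda_0}\equiv 0$, reducing to the already-contradicted symmetric case. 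Securing the required uniform regularity and the nontriviality of $u^*$ is the most delicate point of the whole argument.
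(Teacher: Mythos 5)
Your outline follows the paper's strategy (moving planes in $x_n$, Theorem \ref{NRP-Anti-ubdd} to start, definition of $\lambda_0$, a compactness argument at the critical step), but there is a genuine gap exactly at the point you yourself flag as most delicate, and it is not a removable technicality. In your translation-compactness step you propose to contradict the degenerate alternative $w^{*}_{\lambda_0}\equiv 0$ by ``reducing to the already-contradicted symmetric case''. That is circular: your contradiction for the symmetric case of the original $u$ used either the nontriviality of $u$, which is not inherited by the limit $u^{*}$ (horizontal translates of a solution may perfectly well converge to $0$), or, under $(H_2)$, Lemma \ref{lem-as} --- and that invocation is itself illegitimate, since in the symmetric scenario $u$ vanishes identically on $\{x_n\ge 2\lambda_0\}$, so it is not a positive solution of \eqref{MP-13} and the lemma's hypotheses fail (its conclusion is false for such a $u$). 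For the original $u$ the symmetric case is in fact handled with no hypothesis on $f$, as in the paper's Lemma \ref{lem1}: if $u\not\equiv 0$ were symmetric about $T_{\lambda_0}$, then $f(0)=-\mathrm{F}_s u(te_n)$ would be strictly negative for every large $t$ while tending to $0$ as $t\to+\infty$, so $u\equiv 0$, contradicting nontriviality. But this, too, does not transfer to $u^{*}$: when $f(0)=0$ (allowed under $(H_2)$, e.g.\ $f(u)=u-u^{3}$), the trivial limit $u^{*}\equiv 0$ is consistent with everything you have established, so your argument does not close precisely in the case the theorem is designed to cover.

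The paper fills this hole with a quantitative nondegeneracy that survives the limit: Lemma \ref{lem-as}, applied to the original solution, gives $u\ge\varepsilon_0>0$ at all points of distance $>M_0$ from $\partial\mathbb{R}^n_+$; since the reflected translates evaluated at points with very negative $x_n$ correspond to values of $u$ far inside the half-space, the limiting profile inherits this lower bound where the limit function must vanish, yielding the contradiction under $(H_2)$, while the case $f(0)\neq 0$ is disposed of by applying Lemma \ref{lem1} to the limit (no nontriviality of the limit is needed there). In addition, the paper's compactness step is set up more carefully than yours: via Remark \ref{rem2} (a consequence of Theorem \ref{32NRP}) the ``almost minimal'' points are first located at a definite distance $2r_0$ from both $\{x_n=0\}$ and $T_{\lambda_0}$, a bump-function perturbation produces genuine minima, and two-sided estimates on $\mathrm{F}_s$ of the perturbed antisymmetric function together with uniform H\"older estimates from \cite{CS3,RS} justify the passage to the limit. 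In your bulk/strip decomposition the bad points may drift toward $\{x_n=0\}$, where the equation is not available at the limiting touching point and Lemma \ref{SMP-anti} cannot be applied as you describe. So the route is right in outline, but the decisive step is missing, and the missing ingredient is the uniform lower bound of Lemma \ref{lem-as} (together with Lemma \ref{lem1}) deployed at the level of the limit, not in the symmetric case of the original $u$.
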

\begin{proof}
We prove Theorem \ref{M-half} via the method of moving planes for $\mathbf{F_{s}}$.

For arbitrary $\lambda>0$, let
$$
T_{\lambda}:=\left\{x \in \mathbb{R}^{n} | x_{n}=\lambda\right\}
$$
be the moving planes,
\begin{equation}
\Sigma_{\lambda}:=\left\{x \in \mathbb{R}^{n} | x_{n}<\lambda\right\}
\end{equation}
be the region below the plane,
$$D_\lambda:=\Sigma_{\lambda}\cap\mathbb{R}^n_+=\left\{x \in \mathbb{R}^{n} | 0<x_{n}<\lambda\right\},$$
and
$$
x^{\lambda}:=\left(x_{1}, x_{2}, \ldots, 2 \lambda-x_{n}\right)
$$
be the reflection of $x$ about the plane $T_{\lambda}$.

Assume that $u$ is a nonnegative nontrivial bounded solution to problem \eqref{ub-34}. To compare the values of $u(x)$ with $u_{\lambda}(x):=u\left(x^{\lambda}\right)$, we denote
$$
w_{\lambda}(x):=u_{\lambda}(x)-u(x).
$$

We aim at proving that $w_{\lambda}>0$ in $D_\lambda$ for any $\lambda>0$, which gives the desired strict monotonicity.

\medskip

The following Lemma is necessary in our proof of Theorem \ref{M-half}.
\begin{lem}\label{lem1}
	Assume that $u \in {\mathcal{L}_{s}}(\mathbb{R}^n)\cap C_{loc}^{1,1}(\mathbb{R}^n_+)$ satisfies \eqref{ub-34} and $u\geq 0$ in $\mathbb{R}^n_+$. If $w_\lambda\equiv0$ in $D_\lambda$ for some $\lambda>0$, then $u\equiv0$ and $f(0)=0$.
\end{lem}
\begin{proof}
	If $w_\lambda\equiv0$ in $D_\lambda$ for some $\lambda>0$, by the strong maximum principle Lemma \ref{SMP-anti}, we have $w_\lambda\equiv0$ in $\Sigma_\lambda$, and hence $u(x)=0$ for $x_n\geq 2\lambda$. Suppose that $u\not\equiv 0$. For $t>2\lambda$, by \eqref{ub-34}, we have
	\begin{align*}
	f(0)=-\mathbf{F_{s}}u(te_n)=\inf \int_{0<y_n<2\lambda} \frac{-u(y)}{|A^{-1}(te_n-y)|^{n+2s}} \mathrm{d}y<0.
	\end{align*}
    However, we can deduce from $u\in{\mathcal{L}_{s}}(\mathbb{R}^n)$ that $$\inf \int_{0<y_n<2\lambda} \frac{-u(y)}{|A^{-1}(te_n-y)|^{n+2s}} \mathrm{d}y\rightarrow 0, \quad\text{as} \,\,t\rightarrow +\infty.$$
    This is a contraction!
\end{proof}

Let $D_\lambda^-:=\{x\in D_\lambda\mid w(x)<0\}$. Then, for any $x\in D_\lambda^-$, we have
\begin{equation}\label{ub-35}
-\mathbf{F_{s}} w_\lambda(x)\geq f(u_\lambda(x))-f(u(x))
=c_\lambda(x) w_\lambda(x),
\end{equation}
where \begin{equation*}
c_{\lambda}(x):=\begin{cases} \frac{f\left(u_{\lambda}(x)\right)-f(u(x))}{u_{\lambda}(x)-u(x)},\quad \text{if}\,\,u_{\lambda}(x)\not=u(x),\\ \\ 0,\quad \text{if}\,\,u_{\lambda}(x)=u(x),
\end{cases}
\end{equation*}
is bounded by the Lipschitz constant of $f$.

\medskip

Now, we continue our proof of Theorem \ref{M-half}. The proof can be divided into two steps.

\medskip

\emph{Step 1.} We will first show that $w_\lambda>0$ in $D_\lambda$ for $\lambda>0$ small.

For $\lambda>0$ small, $D_\lambda$ is an unbounded narrow region, it follows immediately from the \emph{Narrow region principle in unbounded open sets} Theorem \ref{NRP-Anti-ubdd} that $$w_\lambda\geq 0 \quad \text{in}\,\, D_\lambda,$$ and if $w_\lambda=0$ at some point in $D_\lambda$, we have $w_\lambda\equiv 0 \,\,\text{in}\,\, D_\lambda$. Then, Lemma \ref{lem1} implies $u\equiv0$ in $\mathbb{R}^n$. This contradicts with the assumption that $u$ is nontrivial. Therefore, we have, for $\lambda>0$ small,
\begin{equation}\label{ub-36}
w_\lambda> 0 \quad \text{in}\,\, D_\lambda.
\end{equation}

\emph{Step 2.} Step 1 provides a starting point for us to carry out the moving planes procedure. Now we increase $\lambda$ from close to $0$ to $+\infty$ as long as inequality \eqref{ub-36} holds until its limiting position. Define
\begin{equation}\label{ub-37}
\lambda_{0}:=\sup \left\{\lambda>0 \mid w_{\mu}> 0 \,\, \text{in} \,\, D_{\mu}, \,\, \forall \, 0<\mu<\lambda\right\}.
\end{equation}
We aim to prove that
$$
\lambda_{0}=+\infty.
$$

Otherwise, suppose on the contrary that $0<\lambda_{0}<+\infty$, by the definition of $\lambda_{0}$ and Lemma \ref{lem1}, we have
$$w_{\lambda_0}> 0 \quad \text{in}\,\, D_{\lambda_0}.$$
Thus there exists a sequence $\{\lambda_k\}$ such that $\lambda_k>\lambda_0$, $\lambda_k\rightarrow\lambda_0$ as $k\rightarrow +\infty$ and $D_{\lambda_k}^-\not=\emptyset$.
Setting $m_k:=\inf w_{\lambda_k}<0$, then we have $m_k\rightarrow 0$ as $k\rightarrow +\infty$. Let $v_k:=w_{\lambda_k}\chi_{D_{\lambda_k}^-}$, we have, for any $x\in D_{\lambda_k}^-$,
\begin{align}\label{ub-tpde}
-\mathbf{F_{s}} v_k(x)&=\inf \int_{\mathbb{R}^n} \frac{v_k(x)-v_k(y)}{|A^{-1}(x-y)|^{n+2s}} \mathrm{d}y \nonumber\\
&=\inf \left[\int_{D_{\lambda_k}^-} \frac{w_{\lambda_k}(x)-w_{\lambda_k}(y)}{|A^{-1}(x-y)|^{n+2s}} \mathrm{d}y+\int_{\mathbb{R}^n\setminus D_{\lambda_k}^-} \frac{w_{\lambda_k}(x)}{|A^{-1}(x-y)|^{n+2s}} \mathrm{d}y\right]\nonumber\\
&\geq \inf \int_{\mathbb{R}^n} \frac{w_{\lambda_k}(x)-w_{\lambda_k}(y)}{|A^{-1}(x-y)|^{n+2s}} \mathrm{d}y\\
&=-\mathbf{F_s}w_{\lambda_k}(x)\geq f(u_{\lambda_k}(x))-f(u(x))\nonumber\\
&=c_{\lambda_k}(x)v_k(x).\nonumber
\end{align}
Then, applying Remark \ref{rem2} to $v_k$, we deduce that there exist a sequence of points $\{x^k\}$, constants $0<r_0<\frac{\lambda_{0}}{4}$ small and $0<\beta_0<1$ close to $1$ satisfy
\begin{equation}\label{ub-38}
w_{\lambda_k}(x^k)\leq \beta_0 m_k \quad \text{and} \quad 2r_0<(x^k)_n<\lambda_0-2r_0,
\end{equation}
where $(x^k)_n$ denotes the $n$-th component of $x^k$.

Let
\begin{equation*}
\gamma(x)=\begin{cases}e^{\frac{|x|^{2}}{|x|^2-1}}, \quad\,|x|<1\\ 0, \qquad\,\, \quad|x|\geq 1.\end{cases}
\end{equation*}
It is well known that $\gamma\in C_0^\infty(\mathbb{R}^N)$, thus $|\mathbf{F_{s}}\gamma(x)|\leq C$ for all $x \in \mathbb{R}^n$. Moreover, $\mathbf{F_{s}}\gamma(x)\sim|x|^{-n-2s}$ as $|x|\rightarrow+\infty$.

Set
\begin{equation}\label{ub-39}
w_k(x):=w_{\lambda_{k}}(x)-\varepsilon_k\left[\gamma\left(\frac{x-x^k}{r_0}\right)-\gamma\left(\frac{x-(x^k)^{\lambda_k}}{r_0}\right)\right],
\end{equation}
where $\varepsilon_k:=-(1-\beta_{0})m_k$, then we have
$$w_k(x^k)\leq m_k.$$
Note that $w_k$ is also anti-symmetric with respect to $T_{\lambda_k}$.

Since $w_k\geq m_k$ in $\Sigma_{\lambda_k}\setminus B_{r_0}(x^k)$, there exists $\bar{x}^k\in B_{r_0}(x^k)$ such that $$w_k(\bar{x}^k)=\min\limits_{x\in\Sigma_{\lambda_k}} w_k(x).$$

On the one hand,
\begin{equation}\label{ub-40}
-\mathbf{F_s}w_k(\bar{x}^k)\geq f(u_{\lambda_k}(\bar{x}^k))-f(u(\bar{x}^k))-\frac{C\varepsilon_k}{r_0^{2s}}\geq Lw_k(\bar{x}^k)-\frac{C\varepsilon_k}{r_0^{2s}},
\end{equation}
where $L$ denotes the Lipschitz constant of $f$. It is easy to see that
\begin{equation}\label{ub-40'}
Lw_k(\bar{x}^k)-\frac{C\varepsilon_k}{r_0^{2s}}\rightarrow 0,\quad \text{as}\,\,k\rightarrow +\infty.
\end{equation}

On the other hand,
\begin{align}\label{ub-41}
&\quad -\mathbf{F_{s}}w_k(\bar{x}^k)\nonumber\\
&=\inf P.V.\int_{\mathbb{R}^{n}}\frac{w_k(\bar{x}^k)-w_k(y)}{|A^{-1}(\bar{x}^k-y)|^{n+2s}} \mathrm{d}y\nonumber\\
&\leq C\inf P.V.\int_{\mathbb{R}^{n}}\frac{w_k(\bar{x}^k)-w_k(y)}{|\bar{x}^k-y|^{n+2s}} \mathrm{d}y\nonumber\\
&=C\left[ P.V.\int_{\Sigma_{\lambda_k}}\frac{w_k(\bar{x}^k)-w_k(y)}{|\bar{x}^k-y|^{n+2s}} \mathrm{d}y+\int_{\Sigma_{\lambda_k}}\frac{w_k(\bar{x}^k)+w_k(y)}{|\bar{x}^k-y^{\lambda_{k}}|^{n+2s}} \mathrm{d}y\right]\nonumber\\
&=C \left[ P.V.\int_{\Sigma_{\lambda_k}}\left(\frac{1}{|\bar{x}^k-y|^{n+2s}}-\frac{1}{|\bar{x}^k-y^{\lambda_k}|^{n+2s}}\right)\left(w_k(\bar{x}^k)-w_k(y)\right) \mathrm{d}y\right.\\
&\qquad\left.+2w_k(\bar{x}^k)\int_{\Sigma_{\lambda_k}}\frac{1}{|\bar{x}^k-y^{\lambda_k}|^{n+2s}} \mathrm{d}y\right]\nonumber\\
&\leq C\int_{\Sigma_{\lambda_k}\setminus B_{r_0}(\bar{x}^k)}\left(\frac{1}{|\bar{x}^k-y|^{n+2s}}-\frac{1}{|\bar{x}^k-y^{\lambda_k}|^{n+2s}}\right)\left(w_k(\bar{x}^k)-w_k(y)\right) \mathrm{d}y\nonumber\\
&=  C\int_{\Sigma_{\lambda_k-(\bar{x}^k)_n}\setminus B_{r_0}(0)}\left(\frac{1}{|y|^{n+2s}}-\frac{1}{|y^{\lambda_k-(\bar{x}^k)_n}|^{n+2s}}\right)\left(w_k(\bar{x}^k)-w_k(y+\bar{x}^k)\right) \mathrm{d}y\leq 0.\nonumber
\end{align}
Up to an subsequence, we may assume that $(\bar{x}^k)_n\rightarrow r_1\in[r_0, \lambda_0-r_0]$ as $k\rightarrow +\infty$.

Since $-\mathbf{F_s}$ is uniformly elliptic and $u$ is bounded, from the interior regularity in \cite{CS3} and the boundary regularity in \cite{RS}, we deduce that $\tilde{w}_k(x):=w_k(x+\bar{x}^k)$ is uniformly H\"older continuous. Therefore, by the Arzel\`{a}-Ascoli Theorem, there exists a function $w^\infty_{\lambda_0-r_1}$ such that
$$\tilde{w}_k\rightarrow w^\infty_{\lambda_0-r_1} \quad \text{uniformly in} \,\,\mathbb{R}^n, \qquad \text{as}\,\,k\rightarrow +\infty.$$
By the Lebesgue's dominated convergence theorem, we have
\begin{align}\label{ub-41'}
&\quad C\int_{\Sigma_{\lambda_k-(\bar{x}^k)_n}\setminus B_{r_0}(0)}\left(\frac{1}{|y|^{n+2s}}-\frac{1}{|y^{\lambda_k-(\bar{x}^k)_n}|^{n+2s}}\right)\left(w_k(\bar{x}^k)-w_k(y+\bar{x}^k)\right) \mathrm{d}y\\
&\rightarrow -C\int_{\Sigma_{\lambda_{0}-r_1}\setminus B_{r_0}(0)}\left(\frac{1}{|y|^{n+2s}}-\frac{1}{|y^{\lambda_{0}-r_1}|^{n+2s}}\right)w^\infty_{\lambda_0-r_1}(y)\mathrm{d}y\leq 0,\nonumber
\end{align}
as $k\rightarrow +\infty$.

Combining \eqref{ub-40}, \eqref{ub-40'}, \eqref{ub-41} and \eqref{ub-41'}, we obtain
$$ C\int_{\Sigma_{\lambda_{0}-r_1}\setminus B_{r_0}(0)}\left(\frac{1}{|y|^{n+2s}}-\frac{1}{|y^{\lambda_{0}-r_1}|^{n+2s}}\right)w^\infty_{\lambda_0-r_1}(y)\mathrm{d}y=0,$$
which implies that
$$u_{\lambda_0-r_1}^{\infty}(x)\equiv u^\infty(x),\quad \forall x\in \Sigma_{\lambda_{0}-r_1}\setminus B_{r_0}(0),$$
where $u_{\lambda_0-r_1}^{\infty}(x)$ and $u^\infty(x)$ are the limits of $u_{\lambda_k-(\bar{x}^k)_n}(x+\bar{x}^k)$ and $u(x+\bar{x}^k)$ respectively.
By regularity theory, $u^\infty(x)$ satisfies
\begin{equation}\label{ub-42}
\begin{cases}-\mathbf{F_s}u^\infty(x)=f(u^\infty(x)), \qquad \forall \,\, x\in \mathbb{R}^n_{+}-r_{1}e_n, \\ \\ u^{\infty}(x)=0, \qquad \forall \,\, x\in \mathbb{R}^n\setminus(\mathbb{R}^n_{+}-r_{1}e_n).\end{cases}
\end{equation}
Then, by the strong maximum principle Lemma \ref{SMP-anti}, we have $w^\infty_{\lambda_0-r_1}\equiv0$. Consequently, we derive from Lemma \ref{lem1} that $u^\infty\equiv0$ and $f(0)=0$. This leads to a contradiction if $f(0)\not=0$.
	
If $f(0)=0$ and $f$ satisfies $(H_2)$,  we infer from Lemma \ref{lem-as} that $u_{\lambda_0-r_1}^{\infty}(x)\geq\varepsilon_0$ for $x$ with $x_n$ sufficiently negative, while $u^\infty(x)=0$. This is a contradiction!

Therefore, we must have $\lambda_0=+\infty$ and hence $w_{\lambda}>0$ in $D_\lambda$ for any $\lambda>0$. This finishes our proof of Theorem \ref{M-half}.
\end{proof}

\begin{rem}\label{rem19}
Typical kinds of nonlinearities $f(u)$ satisfying all the assumptions in Theorem \ref{M-half} include: De Giorgi type nonlinearity $f(u)=u-u^{3}$ and $f(u)=e^{\kappa u}$ with $\kappa\in\mathbb{R}$.
\end{rem}

\begin{rem}\label{rem18}
For monotonicity of solutions to PDEs involving fractional Laplacians $(-\Delta)^{s}$ on half-space $\mathbb{R}^{n}_{+}$, please refer to Barrios, Del Pezzo, Garc\'{\i}a-Meli\'{a}n and Quaas \cite{BDGQ} and Barrios, Garc\'{\i}a-Meli\'{a}n and Quaas \cite{BGQ}.
\end{rem}

\section*{Acknowledgements}
The authors are grateful to the referees for their careful reading and valuable comments and suggestions that improved the presentation of the paper.

\end{document}